\newtheorem{theorem}{Theorem}[section]
\newtheorem{lemma}[theorem]{Lemma}
\newtheorem{proposition}[theorem]{Proposition}
\newtheorem{observation}[theorem]{Observation}
\newtheorem{claim}{Claim}[theorem]
\newtheorem{conjecture}{Conjecture}[section]
\newtheorem{problem}[conjecture]{Problem}
\newcommand{\cal}{\mathcal}
\newcommand{\tw}{tw\xspace}
\newcommand{\atw}{tw_{\alpha}\xspace}
\newcommand{\lpopt}{LP_{opt}}
\newcounter{tbox}
\title{(Treewidth, Clique)-Boundedness and Poly-logarithmic Tree-Independence}
\author{Maria Chudnovsky$^{\dagger}$}
\author{Ajaykrishnan E S$^{\ddagger}$}
\author{Daniel Lokshtanov$^{\ddagger}$}
\thanks{$^{\ddagger}$ Department of Computer Science, University of California Santa Barbara, Santa Barbara, CA, USA}
\thanks{$^{\dagger}$Princeton University, Princeton, NJ, USA.  Supported by NSF-EPSRC Grant DMS-2348219 and by AFOSR grant FA9550-22-1-0083.}
\date{\today}
\begin{document}

\pagenumbering{gobble}

\begin{abstract}
An {\em independent set} in a graph $G$ is a set of pairwise non-adjacent vertices.
A {\em tree decomposition} of  $G$ is a pair $(T, \chi)$ where $T$ is a tree and $\chi : V(T) \rightarrow 2^{V(G)}$ is a function satisfying the following two axioms: for every edge $uv \in E(G)$ there is an $x \in V(T)$ such that $\{u,v\} \subseteq \chi(x)$, and for every vertex $u \in V(G)$ the set $\{x \in V(T) ~|~ u \in \chi(x)\}$ induces a non-empty and connected subtree of $T$.
The sets $\chi(x)$ for $x \in V(T)$ are called the {\em bags} of the tree decomposition.  The {\em tree-independence} number of $G$ is the minimum taken over all tree decompositions of $G$ of the maximum size of an independent set of the graph induced by a bag of the tree decomposition.

The study of graph classes with bounded  tree-independence number has attracted much attention in recent years, in part due to its important algorithmic implications. A conjecture
of Dallard, Milani\v{c} and \v{S}torgel, connecting tree-independence number to the classical notion of treewidth, was one of the motivating problems in the area.
This conjecture was recently disproved, but here we prove a slight variant of
it, that retains much of the algorithmic significance. As part of the proof we introduce the notion of {\em independence-containers}, which can be viewed as a generalization of the set of all maximal cliques of a graph, and is of independent interest. 
\end{abstract}

\maketitle

\newpage
\pagenumbering{arabic}

\section{Introduction}\label{sec:intro}
    

Tree decompositions are a key tool in structural graph theory and graph algorithms. For a graph $G = (V(G),E(G))$, a \emph{tree decomposition} $(T, \chi)$ of $G$ consists of a tree $T$ and a map $\chi: V(T) \to 2^{V(G)}$ with the following properties: {\em (i)} For every $v_1v_2 \in E(G)$, there exists $t \in V(T)$ with $\{v_1, v_2\} \subseteq \chi(t)$, {\em (ii)} For every $v \in V(G)$, the subgraph of $T$ induced by $\{t \in V(T) \mid v \in \chi(t)\}$ is non-empty and connected.
The \emph{width} of a tree decomposition $(T, \chi)$
is $\max_{t \in V(T)} |\chi(t)|-1$. The \emph{treewidth} of $G$, denoted by $\tw(G)$, is the minimum width of a tree decomposition of $G$. Bounded treewidth is a fundamental graph property from both a structural \cite{bodlaender1998partial,RS-GMXVI} and an algorithmic \cite{cygan2015parameterized} perspective, for a recent survey see~\cite{korhonen2024computing}.

For a graph $G$ and $X \subseteq V(G)$ we denote by $G[X]$ the {\em subgraph of $G$ induced by $X$}, that is the graph with vertex set $X$ in which two vertices are adjacent if and only if they are adjacent in $G$. We denote by $G - X$ the graph $G[V(G) \setminus X]$.  A class of graphs is {\em hereditary} if it is closed under taking induced subgraphs.
Tree decompositions have traditionally been studied in the context of graph minors, but in recent years their behavior in  hereditary graph classes has come into the spotlight. Additionally, more nuanced ways of measuring the complexity of a tree decomposition than just its width have been considered
(see e.g.~\cite{beyondTreewidthSurvey,Yolov}).

{\em Tree-independence number}, which we will define next,  is one such complexity measure, which has recently received substantial attention. An {\em independent set}  in a graph $G$ is a set of pairwise non-adjacent vertices of $G$. The {\em independence} number 
$\alpha(G)$ of $G$ is the maximum size of an independent set in $G$. For a vertex subset $X\subseteq V(G)$, we use $\alpha(X)$ to denote $\alpha(G[X])$.
%
%
The {\em independence number} of a tree decomposition $(T, \chi)$ of $G$ is $\max_{t \in V(T)} \alpha(\chi(t))$. The {\em tree-independence number} of $G$, denoted $\atw(G)$, is the minimum independence number of a tree decomposition of $G$.
Tree-independence was first defined by Yolov~\cite{Yolov}, and independently re-discovered by Dallard et al.~\cite{dms2}, who initiated the study of tree-independence number in the context of graphs whose treewidth is bounded by a function of their clique number. 
Both Yolov~\cite{Yolov} and  Dallard et al.~\cite{dms2} observed that the {\sc Independent Set} problem (given as input $G$, compute $\alpha(G)$), as well as a few other problems which are NP-hard on general graphs, can be solved in polynomial time in graphs with
bounded tree-independence number (even if a tree decomposition of constant independence number is not explicitly given).  Lima et al.~\cite{lima2024tree} added a number of even more general problems to this  list. Tree-independence number also has connections to coarse geometry \cite{CoarseGeoKwon, Agelos}.

Motivated by these considerations, Dallard et al.~\cite{dms3} initiated the systematic study of graph classes with {\em bounded tree-independence number}, namely graph classes ${\cal C}$ for which there exists a universal constant $c$ such that every graph $G \in {\cal C}$ satisfies $\atw(G) \leq c$. 
A {\em clique} in a graph $G$ is a set of pairwise adjacent vertices.
For a graph $G$ we denote by $\omega(G)$ the largest size of a clique in $G$.
A hereditary class of graphs $\cal C$ is said to be
$(\tw, \omega)$-bounded if there exists a function $f$ such that
$\tw(G) \leq  f(\omega(G))$ for every  $G \in \cal C$.
Such a function $f$ is called a 
{\em $(\tw,\omega)$-bounding function} for the class $\mathcal{C}$.
We say that $\mathcal{C}$ is {\em polynomially} $(\tw,\omega)$-bounded if
there is a polynomial $(\tw,\omega)$-bounding  function for $\mathcal{C}$.
By Ramsey's theorem \cite{Ramsey}, graph classes of bounded tree-independence number are polynomially $(\tw,\omega )$-bounded. It was conjectured in \cite{dms3} that a converse implication holds as well.  This statement became known as
``Dallard-Milani\v{c}-{\v{S}}torgel Conjecture''.
\begin{conjecture}\label{Milanic}
  Let $\cal C$ be a hereditary graph class. Then, if there exists a function $f$ such that for every $G \in {\cal C}$ it holds that $\tw(G) \leq f(\omega(G))$, then ${\cal C}$ has bounded tree-independence number.
 \end{conjecture}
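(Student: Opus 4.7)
The plan is to attempt a proof by recursion on balanced separators with controlled independence number. The classical fact that every graph of treewidth $w$ admits a balanced separator of size $w+1$, combined with the hypothesis $\tw(G) \leq f(\omega(G))$, immediately yields balanced separators of size bounded by $f(\omega)$; but since $\omega$ is allowed to grow with the class, this only recovers $\atw(G) \leq f(\omega(G))+1$, not the desired absolute constant. The whole game is therefore to find balanced separators whose \emph{independence number}, rather than their size, is uniformly bounded.

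First I would try to establish a strengthened separator statement: there is a constant $c$ such that every $G \in \mathcal{C}$ has a balanced separator $S$ with $\alpha(G[S]) \leq c$. The natural strategy is an exchange argument. Start with any balanced separator $S_0$ of size $f(\omega)$; if it contains a large independent set $I$, pick some $v \in I$ and try to replace a portion of $I$ by the neighborhood of $v$ inside $S_0$, or by a small clique cover of $I$ produced by appealing to the $(\tw,\omega)$ hypothesis applied to some auxiliary subgraph. Since $\cal C$ is hereditary, subgraphs obtained by restricting to neighborhoods or to $G\setminus I$ remain in $\cal C$, so the hypothesis is available throughout the exchange.

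Once such separators are in hand, the tree decomposition is assembled in the standard recursive fashion. Remove $S$, recurse on each connected component of $G \setminus S$, and attach $S$ into the root bag of each recursively obtained decomposition. Each bag is then a union of at most two separators, so its independence number is at most $2c$. Hereditariness guarantees the recursion stays inside $\cal C$, and balancedness ensures the recursion terminates. This gives $\atw(G) \leq 2c$ as desired.

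The main obstacle, and in fact the heart of the statement, is the strengthened separator lemma. There is no obvious reason why a small balanced separator should have small independence number: in principle the $f(\omega)$ vertices could form an independent set, and simply applying Ramsey's theorem gives a bound in $\omega$ rather than a constant. What seems required is a new combinatorial object that lets one certify that every independent set in a prospective separator can be ``routed'' into a constant-independence substructure, and I expect this is exactly the role played by the \emph{independence-containers} promised in the abstract: a family of subsets of $V(G)$, each of bounded independence number, such that every maximal independent set of $G$ is contained in one member of the family, analogous to how the set of all maximal cliques covers every clique. Producing such a family of polynomial size from $(\tw,\omega)$-boundedness alone, and then showing how to use it to redirect the balanced separator through low-independence witnesses, is the step I expect to be difficult, and it is the step on which the success or failure of the whole approach hinges.
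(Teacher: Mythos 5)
The statement you are trying to prove is false, and the paper says so explicitly: Conjecture~\ref{Milanic} is Milani\v{c}'s Conjecture, which was recently disproved by Chudnovsky and Trotignon (reference \cite{CT} in this paper), even when the bounding function $f$ is a polynomial. The paper never proves this conjecture; it proves only a weakened, poly-logarithmic variant (Theorem~\ref{thm:main}): $(\tw,\omega)$-boundedness with a $(\omega\log n)^{O(1)}$ bound is equivalent to $\atw(G) \leq (\log n)^{O(1)}$, a bound that grows with $n$ rather than being an absolute constant.

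Consequently the ``strengthened separator lemma'' at the heart of your plan cannot hold: there is no absolute constant $c$ such that every graph in a $(\tw,\omega)$-bounded hereditary class has a balanced separator of independence number at most $c$. Indeed, the recursion you describe in your third paragraph is sound (it is essentially Lemma~\ref{lem:tree_alpha_separator} of the paper), so if such separators existed they would yield $\atw(G) \leq 2c$ for the whole class, contradicting the Chudnovsky--Trotignon counterexample. The same obstruction rules out the container family you hope for: the paper's independence-containers cover sets of \emph{small independence number} (e.g.\ all cliques, for $(1,a)$-containers) --- not maximal independent sets, as you wrote --- and even under the stronger hypothesis of a poly-logarithmic $(\tw,\omega)$-bound the paper only obtains containers of quasi-polynomial size with poly-logarithmic independence number (Theorem~\ref{thm:containerIff}), which after the LP-rounding and sampling machinery produce balanced separators whose independence number is poly-logarithmic in $n$, never constant. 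So the gap is not a missing technical step in your exchange argument; the intermediate statement you reduced the conjecture to is itself false, and the best one can salvage along these lines is precisely the poly-logarithmic theorem the paper proves.
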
 
Conjecture~\ref{Milanic} received a significant amount of attention in the graph theoretic community. 
It was recently disproved by Chudnovsky and Trotignon~\cite{CT} (even if $f$ is a polynomial function). 

For a function $f: \mathbb{N} \rightarrow \mathbb{N}$, we say that $f$ is {\em poly-logarithmic}
if there exist positive integers $c,d$ such that $f(n) \leq c \log^d n$ for every $n \geq 3$.
We say that a hereditary graph class $\mathcal{C}$ has 
{\em poly-logarithmic tree-independence number}
if there exists a poly-logarithmic function $f$ such that every $G \in \mathcal{C}$ satisfies
$\atw(G) \leq f(|V(G)|)$. 
A function $f:\mathbb{N} \rightarrow \mathbb{N}$ is {\em quasi-polynomial} if 
there exist positive integers $c,d$ such that $f(n) \leq c n^{\log^d n}$ for every $n$.
Several of the algorithms mentioned above~\cite{dms2,lima2024tree,Yolov}, most prominently for {\sc Independent Set}, run in quasi-polynomial time  
when the input is restricted to a graph class with poly-logarithmic  tree-independence number.
A quasi-polynomial-time algorithm for a problem, while not quite as efficient as a polynomial-time algorithm, shows that the problem is not NP-hard unless every problem in NP can be solved in quasi-polynomial time, a complexity theoretic collapse that is viewed almost as unlikely as P=NP. We refer the reader to \cite{TI2,sparseinduced} for a more detailed discussion of the algorithmic applications of poly-logarithmic bounds on the tree-independence number. 

Several graph classes have recently been shown to have
poly-logarithmic tree-independence number (\cite{TI2,ChudnovskyGHLS25}), yielding the first quasi-polynomial-time algorithms for {\sc Independent Set} on these graph classes.  We remark that the existence of a {\em polynomial} time algorithm for {\sc Independent Set} for the graph class studied in \cite{ChudnovskyGHLS25} remains a prominent open problem in the field (see~\cite{ChudnovskyGHLS25} and references within). 
By now a number of graph classes with poly-logarithmic (but not bounded by a constant) tree-independence number have been identified 
\cite{mainconj, deathstar, awesomegraphparameters}. 
This motivates  a systematic study of graph classes with poly-logarithmic tree-independence number. Our main result is that, in order to determine if a graph class $\mathcal{C}$ has poly-logarithmic tree-independence number, it is enough to study 
the dependence of treewidth on clique number in graphs in this class.
We prove the following:

\begin{restatable}{theorem}{MainThmTreeAlpha}\label{thm:main}
    Let $\mathcal{C}$ be a hereditary graph class. The following are equivalent:
    \begin{enumerate}\setlength\itemsep{-.7pt}
        \item[(i)]\label{itm:atwbounded} There exists a positive integer $c_1$ such that for every $G \in \mathcal{C}$ on at least $3$ vertices we have $\atw(G) \leq (\log |V(G)|)^{c_1}$.
        \item[(ii)]\label{itm:atwcliquebounded} There exists a positive integer $c_2$ such that for every $G \in \mathcal{C}$ on at least $3$ vertices we have $\atw(G) \leq (\omega(G) \log |V(G)|)^{c_2}$.
        \item[(iii)]\label{itm:twcliquebounded} There exists a positive integer $c_3$ such that for every $G \in \mathcal{C}$ on at least $3$ vertices we have $\tw(G) \leq (\omega(G) \log |V(G)|)^{c_3}$.
    \end{enumerate}
\end{restatable}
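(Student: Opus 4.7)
The two easy implications are (i) $\Rightarrow$ (ii), obtained by setting $c_2 := c_1$ and noting $\omega(G) \geq 1$, and (iii) $\Rightarrow$ (ii), obtained from the standard inequality $\atw(G) \leq \tw(G) + 1$ (any bag of size $s$ has independence number at most $s$). So (ii) is the weakest of the three conditions, and the substance of the theorem is to derive both (i) and (iii) from (ii).

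The main direction is (ii) $\Rightarrow$ (i). The obstacle is that a tree decomposition witnessing $\atw(G) \leq (\omega(G)\log|V(G)|)^{c_2}$ may have bags whose independence number grows polynomially with $\omega(G)$, whereas (i) demands a bound independent of $\omega$. The plan is to refine such a decomposition using \emph{independence-containers}: vertex subsets of polylogarithmic independence number satisfying closure properties that generalise those of maximal cliques. (For a chordal graph, the maximal cliques are precisely the independence-containers of independence number one, and their clique tree is an optimal tree decomposition witnessing $\atw = 1$.) The aim is to show that in any polynomially $(\tw,\omega)$-bounded hereditary class, every graph admits only quasi-polynomially many maximal containers of polylog independence, and that these containers admit a Helly-type tree structure analogous to the clique tree, so that assembling them yields a tree decomposition of $G$ with $\atw(G) \leq (\log|V(G)|)^{c_1}$.

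For the remaining implication (ii) $\Rightarrow$ (iii), we use strong induction on $|V(G)|$ combined with the main implication. Apply (ii) $\Rightarrow$ (i) to obtain a tree decomposition $(T,\chi)$ in which every bag $B$ has $\alpha(G[B]) \leq (\log|V(G)|)^{c_1}$. Whenever $B \subsetneq V(G)$, the induced subgraph $G[B] \in \mathcal{C}$ has strictly fewer vertices, so by the inductive hypothesis $\tw(G[B]) \leq (\omega(G)\log|V(G)|)^{c_3}$; replacing each bag with an optimal tree decomposition of $G[B]$, and gluing carefully along the interfaces $\chi(t) \cap \chi(t')$ so that every edge of $G$ remains covered, yields the required decomposition of $G$. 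The edge case where an optimal bag equals $V(G)$ can be handled either by deleting a vertex and invoking induction on the smaller graph, or by applying Ramsey's theorem to bound $|V(G)|$ in terms of $\alpha(G)$ and $\omega(G)$. The principal obstacle of the entire proof is the construction of the independence-containers in the main implication; once this machinery is in place, the deduction of (iii) is essentially bookkeeping.
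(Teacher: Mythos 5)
Your treatment of the easy implications is essentially sound: (i) $\Rightarrow$ (ii) is trivial, and (iii) $\Rightarrow$ (ii) via $\atw(G) \leq \tw(G)+1$ is correct (the paper instead proves (ii) $\Rightarrow$ (iii) directly). However, your argument for (ii) $\Rightarrow$ (iii) has a flaw: replacing each bag $\chi(t)$ by an optimal tree decomposition of $G[\chi(t)]$ and ``gluing along the interfaces'' does not in general yield a tree decomposition of $G$, because the adhesion set $\chi(t)\cap\chi(t')$ need not be contained in any single bag of the refined decomposition of $G[\chi(t)]$ (this only works when adhesions are cliques or are otherwise controlled). The correct and much shorter route, which the paper takes, is Erd\H{o}s--Hajnal: condition (ii) forces $\mathcal{C}$ to exclude some $K_{t,t}$, and in a $K_{t,t}$-free graph any bag with independence number at most $(\omega\log n)^{c_2}$ and clique number at most $\omega$ has at most polynomially many (in $\omega\log n$) vertices, so the same decomposition witnesses (iii). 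This part of your proposal is fixable.

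The serious problem is the core implication. You correctly identify independence-containers as the relevant object, but the entire argument rests on the unproved assertion that the maximal containers of polylogarithmic independence number ``admit a Helly-type tree structure analogous to the clique tree'' from which a good tree decomposition can be assembled. This is not a reduction of the problem; it \emph{is} the problem, and there is no reason to expect such a direct analogue of the clique tree to exist (the clique-tree property of chordal graphs depends on very special structure, and a general container family carries no Helly or tree-compatibility property). The paper's actual proof of (iii) $\Rightarrow$ (i) is by contrapositive and uses entirely different machinery: assuming $\atw(G)$ is large, one extracts an independent set $I$ with no balanced separator of small independence number (Lemma~\ref{lem:tree_alpha_separator}), sets up a balanced-separator LP over a $(1,a)$-container family, shows that a small LP optimum would yield such a separator via a rounding argument (Theorem~\ref{thm:balanced_separator_rounding}), and, when the optimum is large, samples induced paths from the dual distribution to produce an induced subgraph $G'$ with poly-logarithmic clique number and large treewidth (Theorem~\ref{thm:balanced_separator_dual_rounding}), contradicting (iii). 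None of this LP-duality and sampling machinery, nor the contrapositive structure that makes the hereditary hypothesis usable, appears in your proposal, so the main implication remains unestablished.
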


A step in the proof of Theorem~\ref{thm:main} is the following result, which we believe to be of independent interest. For two non-adjacent vertices $a,b$
we say that $X \subseteq V(G)$ {\em separates} $a$ and $b$  if
$\{a,b\} \cap X = \emptyset$, and no component
of $G - X$ contains both $a$ and $b$. We prove:

\begin{restatable}{theorem}{MainThmSeparator}\label{thm:ab_separator_in_class}
    For every positive integer $c$ there exists an integer $d=d(c)$ with the following property. If $\mathcal C$ is a hereditary graph class such that for every $G \in \mathcal C$ 
    and for every two non-adjacent vertices $u,v \in V(G)$, there exists a set $X \subseteq V(G)$ disjoint from $\{u,v\}$ with $|X| \leq (\omega(G) \log |V(G)|)^c$ that separates $u$ from $v$, then for every $G \in \mathcal C$
    and for every two non-adjacent vertices $u,v \in V(G)$, there exists a set $X \subseteq V(G)$ disjoint from $\{u,v\}$, with $\alpha(X) \leq \log^d (|V(G)|)$, that separates $u$ from $v$.
\end{restatable}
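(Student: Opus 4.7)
My starting observation is that for any subset $X \subseteq V(G)$ expressible as a union of at most $t$ cliques of $G$, one has $\alpha(X) \le t$, since any independent set meets each clique in at most one vertex. I therefore aim to construct a $u,v$-separator that decomposes into at most $\log^d n$ cliques. As the initial ingredient, apply the hypothesis to obtain a $u,v$-separator $X_0 \subseteq V(G) \setminus \{u,v\}$ with $|X_0| \le (\omega(G) \log n)^c$, chosen minimal so that every vertex of $X_0$ has neighbors in both components $A, B$ of $G \setminus X_0$ containing $u$ and $v$, respectively. In the easy range $\omega(G) \le \log n$ one has $|X_0| \le (\log n)^{2c}$, so $\alpha(X_0) \le |X_0| \le \log^{2c} n$ and we are done with $d = 2c$. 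The interesting case is $\omega(G) > \log n$, in which $G$ contains cliques larger than $\log n$ and a clique-covering approach becomes viable.

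The main argument then proceeds by induction on $|V(G)|$, with the inductive hypothesis invoked via the heredity of $\mathcal{C}$ on appropriate subgraphs. A naive attempt is: for each $x \in X_0$, recursively find small-$\alpha$ separators for the pairs $(u,x)$ in $G[A \cup \{x\}]$ and $(x,v)$ in $G[\{x\} \cup B]$, and glue them together; but since $|X_0|$ may itself be polynomially large in $\omega$, such a naive gluing blows up the independence bound by a factor of $|X_0|$ and fails. Instead, the refined plan is to iteratively \emph{replace} vertices of $X_0$ by cliques: whenever a vertex $x \in X_0$ has many non-neighbors inside $X_0$, absorbing a clique-neighborhood of $x$ into the separator produces a new $u,v$-separator in which $x$ is dominated by a single clique and hence contributes only $1$ to the independence number. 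Continuing until no such $x$ remains leaves a separator covered by few cliques, and the hereditary application of the hypothesis to the shrinking side-graphs controls the number of iterations needed. The "independence-containers" introduced in the paper are likely the structural object formalizing this replacement, packaging each clique-neighborhood together with the separating data needed to preserve $u,v$-separation across iterations.

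The hard part—and where I expect the crucial technical work to lie—is in preventing the independence-number bound from growing multiplicatively with the recursion depth. A scheme that multiplies $\alpha$ by a polylog at each of $\Theta(\log n)$ levels would yield $(\log n)^{\Theta(\log n)}$, which is merely quasi-polynomial and not polylogarithmic. Thus the replacement scheme must be arranged so that only a constant (depending on $c$) number of levels contribute multiplicatively, or so that each level contributes additively rather than multiplicatively to the polylog exponent $d$. Achieving this additive accumulation, presumably by balancing the sizes of the subproblems (in the spirit of balanced separators), by invoking the hypothesis on several scales simultaneously, and by formalizing the merging step via the independence-container machinery, is the delicate combinatorial heart of the proof.
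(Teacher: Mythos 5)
There is a genuine gap here: your proposal is a plan rather than a proof, and you say so yourself --- the ``delicate combinatorial heart,'' namely preventing the independence number from growing multiplicatively over the recursion, is exactly the part you leave unresolved. Your scheme (induction on $|V(G)|$, iteratively replacing vertices of a minimal separator $X_0$ by clique-neighborhoods) is not shown to terminate with a poly-logarithmic bound, and no mechanism is given for why the accumulation would be additive rather than multiplicative. Your guess about the role of independence-containers is also off: a $(1,a)$-container family is not a device ``packaging each clique-neighborhood together with separating data,'' but simply a family $\mathcal F$ of vertex sets, each of independence number at most $a$, such that every clique of $G$ is contained in some member of $\mathcal F$; its existence (with quasi-polynomial $|\mathcal F|$ and poly-logarithmic $a$) follows from the fact that the hypothesis forces $\mathcal C$ to exclude $K_{t,t}$ for some $t$. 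The case split on $\omega(G)\le\log n$ versus $\omega(G)>\log n$ plays no role in the actual argument.

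The paper's proof avoids recursion entirely and runs through LP duality. One sets $A=N(u)$, $B=N(v)$ and considers the LP that fractionally covers all induced $A$--$B$ paths by sets of $\mathcal F$. If the optimum is small, a rounding argument (Theorem~\ref{thm:A-B_separator}) produces an $A$--$B$ separator $S$ with small fractional cover number with respect to $\mathcal F$, and then $\alpha(S)\le a\cdot\mathrm{fcov}_{\mathcal F}(S)$ (Lemma~\ref{lem:fractional_cover_independence_bound}) gives the poly-logarithmic bound directly. If the optimum is large, the dual yields a large multiset $\mathcal Q$ of induced $A$--$B$ paths such that each $F\in\mathcal F$ meets few of them; the subgraph $H$ induced by $\{u,v\}$ and these paths then has poly-logarithmic clique number (every clique of $H$ lies in some $F$, each $F$ meets few paths, and each induced path meets $F$ in at most $2a$ vertices), so applying the theorem's \emph{hypothesis} to $H\in\mathcal C$ produces a small $u$--$v$ separator in $H$, which is too small to hit all of $\mathcal Q$ --- a contradiction. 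This single win--win dichotomy is the idea your proposal is missing; without it (or a worked-out substitute for your replacement scheme), the proof does not go through.
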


    
    
    

We note that in Theorem~\ref{thm:main}, the implication {\em (i) $\rightarrow$ (ii)} is trivial, while  {\em (ii) $\rightarrow$ (iii)} follows directly from Erd{\H{o}}s and Hajnal's results~\cite{erdos1989ramsey} regarding the Ramsey numbers in graphs excluding a complete bipartite graph (see Section~\ref{sec:proofOfTheorems}). The majority of this paper deals with proving the implication {\em (iii) $\rightarrow$ (i)}, which can be viewed as a slightly weakened version of Conjecture~\ref{Milanic}.

The starting point for our work is a recent approximation algorithm for computing the fractional hypertreewidth of an input hypergraph, due to Korchemna et al.~\cite{KorchemnaL0S024}. Korchemna et al.~\cite{KorchemnaL0S024} developed a toolbox to deal with separation problems in a graph $G$, when given a family ${\cal F}$ of cliques in $G$, and the task is to find separators in $G$ that are covered by few sets in ${\cal F}$.
We start by generalizing their arguments to the case when the sets in ${\cal F}$ have bounded independence number, as opposed to being cliques. 

Our second contribution is to initiate the study of {\em independence-containers}, a combinatorial object which naturally shows up in our arguments, and which we believe is worth further investigation in its own right. Let $G$ be a graph. Given two sets $F,H\subseteq V(G)$, we say that $F$ {\em covers} $H$ if $H \subseteq F$. A family $\mathcal{F}$ of vertex subsets of $G$ \emph{covers} a vertex set $H$ if there exists an $F\in\mathcal{F}$ that covers $H$. Let $a$ and $b$ be positive integers. Then 
$\mathcal{F}$ is a $(b,a)$-{\em container family} of $G$ if 
$\alpha(F)\leq a$ for every $F\in\mathcal{F}$ and 
$\mathcal{F}$ covers every vertex set $H$ in $G$ such that $\alpha(H) \leq b$.
Independence-containers are tangentially related to, but should not be confused with, containers as used in the hypergraph container method~\cite{saxton2015hypergraph,balogh2018method}.

Independence-containers can be seen as a generalization of the family of all {\em maximal cliques} in a graph $G$. (A clique $C$ in a graph $G$ is {\em maximal} if no proper superset of $V(C)$ induces a clique in $G$.) It is easy to see that the unique inclusion minimal (1,1)-container family ${\cal F}$ in $G$ is precisely the set of all maximal cliques in $G$. The notion of $(1,a)$-container families generalizes the notion of maximal cliques by allowing one to cover the cliques of $G$ using sets of independence number at most $a$. On the other hand $(b,a)$-container families require that all sets $H$ with independence number at most $b$ are covered, not only cliques. 
It is known that for every positive integer $k$ there exists an integer $t$ such that if a graph $G$ excludes the $\overline{kK_2}$ as an induced subgraph (see Section~\ref{sec:prelim} for a definition of the graph $\overline{kK_\omega}$) then $G$ has at most $n^{t}$ maximal cliques~\cite{BalasY89}. On the other hand, the $\overline{kK_2}$ has $2^k$ maximal cliques, and a $(1,2)$-container family ${\cal F}$ of size $1$, namely ${\cal F} = \{V(\overline{kK_2})\}$. 
Thus, by relaxing which sets we can use to cover the cliques we (sometimes) can cover all the cliques in $G$ using many fewer sets. This raises the question of which classes of graphs have ``efficient'' independence-containers in the sense that we would like $|{\cal F}|$ and the independence number $a$ of the sets in ${\cal F}$ to be as small as possible as a function of $n$ and the independence number $b$ of the sets to be covered. We characterize the hereditary classes of graphs that have independence-containers with quasi-polynomial $|{\cal F}|$ and poly-logarithmic $a$: it is precisely the classes which exclude $\overline{kK_k}$ for at least one $k$. Specifically, we prove the following.

\begin{restatable}{theorem}{MainThmContainer}\label{thm:containerIff}
    Let ${\cal C}$ be a hereditary class of graphs and $b : \mathbb{N} \rightarrow \mathbb{N}$ be a 
    poly-logarithmic function.
    Then the following are equivalent:
    \begin{enumerate}\setlength\itemsep{-.7pt}
        \item[(i)] There exist  a poly-logarithmic function $a$ and a quasi-polynomial function $f$  such that for every $n$-vertex graph $G \in {\cal C}$, $G$ has a $(b(n), a(n))$-container family ${\cal F}$ with $|\mathcal{F}| \leq f(n)$. 
        \item[(ii)] There exists a positive integer $k$ such that  $\overline{kK_k} \notin {\cal C}$.
    \end{enumerate}
\end{restatable}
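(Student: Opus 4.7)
\medskip

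\noindent\textbf{Plan.} For the implication (i) $\Rightarrow$ (ii), I would run a counting argument against the tight example $\overline{kK_k}$. Assume, for contradiction, that $\overline{kK_k} \in \mathcal{C}$ for every $k$, and consider $G = \overline{kK_k}$, the complete $k$-partite graph with parts $V_1, \ldots, V_k$ of size $k$, on $n = k^2$ vertices. The $k^k$ transversals of the parts are size-$k$ cliques, each of independence number $1 \leq b(n)$, so each must be covered by some $F \in \mathcal{F}$. Since each $V_i$ is an independent set in $G$, any $F$ with $\alpha(F) \leq a(n)$ satisfies $|F \cap V_i| \leq a(n)$ for every $i$ and so contains at most $a(n)^k$ transversals. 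This forces $|\mathcal{F}| \geq (k/a(n))^k = \exp(\Omega(k \log k))$ whenever $a(n) = O((\log n)^{c_a}) = O((\log k)^{c_a})$, and this exceeds any quasi-polynomial $\exp(O((\log n)^{c_f}))$ in $n = k^2$ for $k$ large enough --- contradicting the assumed bound on $|\mathcal{F}|$.

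\medskip

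For (ii) $\Rightarrow$ (i), fix $k$ with $\overline{kK_k} \notin \mathcal{C}$ and let $G \in \mathcal{C}$ have $n$ vertices. I would construct $\mathcal{F}$ in two recursive stages, both driven by the exclusion of $\overline{kK_k}$. \emph{Stage A} builds a quasi-polynomial family $\mathcal{F}_0$ of polylog-$\alpha$ sets that covers every clique of $G$. The plan is to recurse on $|V(G)|$: at each step use the hypothesis to locate either (a) a polylog-$\alpha$ vertex set that absorbs many cliques into a single container, or (b) a balanced split of $V(G)$ on which to recurse. With recursion depth $O(\log n)$ and polynomially bounded branching, this yields $|\mathcal{F}_0| = \exp(\mathrm{polylog}(n))$. \emph{Stage B} lifts $\mathcal{F}_0$ to cover every $H$ with $\alpha(H) \leq b(n)$ by iteratively peeling off a structured ``skeleton'' of $H$ (a polylog-size substructure obtained from $\overline{kK_k}$-freeness applied inside $H$), routing it through an $\mathcal{F}_0$-container, and recursing on the remainder of $H$; because $b(n)$ is polylog, the peeling halts after polylog rounds, and the total family size stays quasi-polynomial while the independence number of each container remains polylog.

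\medskip

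The main obstacle is Stage A: converting the global hypothesis $\overline{kK_k} \notin \mathcal{C}$ into a local dichotomy (``absorb many cliques into one polylog-$\alpha$ set, or split the vertex set into smaller pieces'') that drives the recursion without blowing up the family size beyond quasi-polynomial or the independence number of the containers beyond polylog. The counting in (i) $\Rightarrow$ (ii) shows this is essentially tight: on $\overline{kK_k}$ with $k$ growing, there are already too many cliques to cover by few small-$\alpha$ sets, so any successful construction must use the exclusion of $\overline{kK_k}$ in a genuinely structural way at every recursive step, not merely to bound $\omega$ or $\alpha$ globally.
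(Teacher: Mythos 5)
Your argument for the implication \emph{(i) $\Rightarrow$ (ii)} is correct and is essentially the paper's own: the paper's Lemma~\ref{lem:containerLowerBoundGraph} is the same double count on $\overline{kK_k}$ (you count transversal cliques, i.e.\ the $b=1$ case; the paper counts sets meeting each part in exactly $b$ vertices, but for the theorem the clique version suffices since container families must in particular cover all cliques). The choice $k=\sqrt{n}$ and the comparison $(k/a(n))^k$ versus a quasi-polynomial in $n$ also matches the paper.

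The implication \emph{(ii) $\Rightarrow$ (i)} is where the entire content of Section~\ref{sec:containers} lives, and your proposal does not prove it: Stage~A is stated as a goal (``locate either a polylog-$\alpha$ set that absorbs many cliques, or a balanced split''), and you yourself identify producing that dichotomy as the main obstacle. That dichotomy is exactly the missing idea, and it is not obtained by a balanced split of $V(G)$. The paper's Lemma~\ref{lem:containers1} proceeds as follows: let $H$ be the set of vertices of degree at least $n(1-\tfrac{1}{2\omega})$. If $H$ contains an independent set $I$ of size $\omega$, then $Q=\bigcap_{v\in I}N(v)$ has $|Q|\ge n/2$ and $G[Q]$ is $\overline{(k-1)K_\omega}$-free, so one recurses on $Q$ with $k-1$ (bounded depth in $k$) and on $G-Q$ with at most $n/2$ vertices; the containers are products ${\cal F}_1\otimes{\cal F}_2$. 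Otherwise $\alpha(H)<\omega$, so $H$ can be placed inside every container for free, and for each low-degree vertex $v$ one recurses on $N(v)$ (a constant-factor shrink of $n$) with the same $b$ and on $G-N[v]$ with $b-1$. The three decreasing quantities $(n,k,b)$ are what keep the recursion depth polylogarithmic and the branching quasi-polynomial; nothing in your sketch identifies them. Your Stage~B (``peel a skeleton, recurse on the remainder'') is likewise not how the paper lifts from cliques to independence number $b$: Lemma~\ref{lem:containers2} guesses a maximum independent set $I\subseteq S$ of the target set $S$, partitions $V(G)$ by the neighborhood trace on $I$, handles traces of size $<\omega$ via Lemma~\ref{lem:containers1} (maximality of $I$ forces those parts to have independence number at most $\omega$), and recurses with $k-1$ on the parts whose trace contains an $\omega$-subset $Z$, since $V^I_{\supseteq Z}$ is $\overline{(k-1)K_\omega}$-free. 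Without these mechanisms the forward direction remains unproven.
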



We leave it as an open problem whether the bounds of Theorem~\ref{thm:containerIff} can be tightened 
when the function $b$ is upper bounded by a constant independent of $n$. In particular it would be interesting to see whether it is possible in this case to improve the upper bound on $a$ to a constant independent of $n$, and the upper bound on the size of $|{\cal F}|$ to a polynomial in $n$.
That said, such an improvement would not have substantial implications for the bounds that we achieve for Theorem~\ref{thm:main}. 

A class ${\cal C}$ of graphs is said to have {\em polynomially many maximal cliques} if there exists a constant $c$ such that every $n$-vertex graph $G$ in ${\cal C}$ has at most $n^c$ different maximal cliques. A version of Theorem~\ref{thm:ab_separator_in_class} 
was proved (implicitly) in \cite{ChudnovskyGHLS25} for graph classes that have polynomially many maximal cliques (it  follows immediately from the proof of Theorem 3.1 in~\cite{ChudnovskyGHLS25}).  The proof of  Theorem~\ref{thm:ab_separator_in_class} 
uses a $(1, a)$-container family ${\cal F}$ instead of the set of all maximal cliques and applies our previously described results  for separators covered by a family ${\cal F}$ of sets with bounded independence number. The existence of a  $(1, a)$-container family ${\cal F}$ with the desired parameters follows  from Theorem~\ref{thm:containerIff} because every hereditary class ${\cal C}$ that satisfies the hypothesis of Theorem~\ref{thm:ab_separator_in_class} excludes the complete bipartite graph  $K_{2,k}$ for some $k$, and hence also excludes $\overline{2K_k}$.


The ``only'' difference between Theorem~\ref{thm:ab_separator_in_class} and the implication {\em (iii) $\rightarrow$ (i)} of Theorem~\ref{thm:main} is that Theorem~\ref{thm:ab_separator_in_class} deals with $u$-$v$ separators, while Theorem~\ref{thm:main} deals with balanced separators.
We take inspiration from the {\em proof} of Theorem~\ref{thm:ab_separator_in_class}, and proceed as follows. 
First we observe that every hereditary graph class ${\cal C}$ that satisfies {\em (iii)} of Theorem~\ref{thm:main} excludes $\overline{2K_t}$ for some $t$, and therefore also satisfies the assumption of Theorem~\ref{thm:containerIff}. Let ${\cal F}$ be the $(1, a)$-container family with quasi-polynomial size and poly-logarithmic bound $a$ on the independence number of the sets in ${\cal F}$ obtained by applying Theorem~\ref{thm:containerIff}. 

We aim to prove the implication {\em (iii) $\rightarrow$ (i)}; to that end we prove the contrapositive. We start with a graph $G \in {\cal C}$ with ``too large'' tree-independence number and prove that $G$ then  contains an induced subgraph $G'$ with $\omega(G')$ very small, and whose treewidth is large. 
%
Suppose now that $G \in {\cal C}$ has large tree-independence number. By the argument from the $4$-approximation algorithm for treewidth of Robertson and Seymour~\cite{RobertsonS95b} applied to $\atw$ (see Lemma~\ref{lem:tree_alpha_separator}), $G$ contains a large independent set $I$ such that for every set $S$ with $\alpha(S) \leq \frac{|I|}{40}$, there exists a component $C$ of $G - S$ such that $|I \cap C| \geq \frac{95|I|}{100}$. 

We consider a variation of the integer linear program of Korchemna et al.~\cite{KorchemnaL0S024} for finding a balanced separator $S$ for $I$ in $G$ which is covered by few sets from ${\cal F}$. 
If the optimum value of the linear programming relaxation of this ILP is small, then the rounding procedure of Korchemna et al. (adapted to ${\cal F}$ with small independence number) yields that $I$ has a balanced separator $S$ covered by few sets in ${\cal F}$. 
Since each set in ${\cal F}$ has very small independence number, this implies that the independence number of $S$ is also small compared to $|I|$, contradicting the separation properties of $I$ stated above. 
Thus we may assume that the optimum value of this LP relaxation is large;   say at least $f$ where $f \geq \frac{|I|}{\log^cn}$  for
an appropriately chosen integer $c$ (we may assume that $n \geq 3$). 

We now consider a dual optimal solution to this LP, and re-interpret it as a probability distribution ${\cal D}$ on induced paths between vertices in $I$. We show that this distribution (essentially) satisfies the following two properties: $(i)$ for every set $F \in {\cal F}$ a path $P$ sampled from ${\cal D}$ intersects $F$ with probability at most $1/f$, and $(ii)$ for every partition of $I$ into $(I_1, I_2)$ with $\max\{|I_1|, |I_2|\} \leq \frac{2|I|}{3}$, the probability that one end of $P$ is in $I_1$ and the other in $I_2$ is at least $\frac{1}{3}$.

We sample about $f \cdot \log |{\cal F}|$ paths $P_1, \ldots, P_\ell$ from this distribution, and set $G' = G[I \cup \bigcup_{i=1}^\ell V(P_i)]$. With high probability $I$ has no small balanced separator in $G'$ (so $G'$ has large treewidth), and no set in ${\cal F}$ intersects more than $\frac{6\ell}{f}$ paths in the sample, which is upper bounded by a poly-logarithmic function of $n$ because ${\cal F}$ has quasi-polynomial size. 
However, this implies that $\omega(G')$ is small! Indeed, consider a clique $C$ in $G'$. We have that $C \subseteq F$ for some $F \in {\cal F}$, since ${\cal F}$ is a $(1, a)$-container family.
The number of  paths $P_i$ in the sample that intersect $F$ (and therefore $C$) is upper-bounded by a poly-logarithmic function of $n$, and each such path intersects $C$ in at most two vertices, since the path is induced. Finally, $|C \cap I| \leq 1$ since $C$ is a clique and $I$ is an independent set, and thus we obtain 
a poly-logarithmic bound on  $|C|$. Thus we found an induced subgraph $G'$ of $G$ with large treewidth and small $\omega$ completing the proof of Theorem~\ref{thm:main}.
\\
\\
\noindent {\bf Overview of the paper.}
In Section~\ref{sec:prelim} we set up definitions and notation. 
In Section~\ref{sec:containers} we prove Theorem~\ref{thm:containerIff}. We do not need the full power of Theorem~\ref{thm:containerIff} in the rest of the paper, so a reader only interested in the proof of Theorems~\ref{thm:main} and~\ref{thm:ab_separator_in_class} can read Lemma~\ref{lem:containers1} and proceed to the next section. 
In Sections~\ref{sec:ab_seps}, \ref{sec:ab_sampling} and~\ref{sec:balanced_separator} we prove the generalized versions of the results of Korchemna et al.~\cite{KorchemnaL0S024} in the setting where ${\cal F}$ is a family of sets with bounded independence number rather than cliques. 
The results proved in Sections~\ref{sec:ab_seps} and~\ref{sec:ab_sampling} are used for the proof of Theorem~\ref{thm:ab_separator_in_class}, while the results of Sections~\ref{sec:ab_seps} and~\ref{sec:balanced_separator} are used as tools for the proof of Theorem~\ref{thm:main}.
Section~\ref{sec:dual_of_balanced_separator} contains the sampling argument which is at the core of the proof of Theorem~\ref{thm:main}. In Section~\ref{sec:proofOfTheorems} we combine the tools  developed so far and prove Theorems~\ref{thm:main} and~\ref{thm:ab_separator_in_class}.
We conclude with some final remarks and open problems in Section~\ref{sec:conclusion}.

\section{Preliminaries}\label{sec:prelim}
We denote by $n$ the number of vertices of the considered graph $G$. For two set families ${\cal F}_1$ and ${\cal F}_2$ we define ${\cal F}_1 \otimes {\cal F}_2 = \{S_1 \cup S_2 ~:~ S_1 \in {\cal F}_1, S_2 \in {\cal F}_2\}$. 
We use $[k]$ to denote the set $\{1, 2, \dots, k\}$ for a positive integer $k$, and $[x]$ to denote $[\left\lceil x \right\rceil]$ for a positive real number $x$. 
Unless the base is specified, logarithms are in base $2$.

A {\em clique} in a graph $G$ is a set of pairwise adjacent vertices. 
We use $\omega(G)$ to denote the size of the largest clique in $G$.
For a positive integer $t$, $K_t$ denotes a clique on $t$ vertices.
An \emph{independent set} in a graph $G$ is a set of pairwise non-adjacent vertices.
We use $\alpha(G)$ to denote the size of the largest independent set in $G$.
With slight abuse of notation, $\alpha(S)$ is used to denote $\alpha(G[S])$, for a vertex subset $S$.
For positive integers $t_1$ and $t_2$, the {\em complete bipartite graph} $K_{t_1, t_2}$ is
the graph with vertex set $A \cup B$, where $A$ and $B$ are disjoint sets of sizes
$t_1$ and $t_2$ respectively, and in which every vertex in $A$ is adjacent to every
vertex in $B$ and there are no other edges.
A \emph{walk} in a graph is a sequence of vertices in which each vertex is adjacent to the next, while a \emph{path} is a walk in which every vertex is distinct. An \emph{$A$--$B$ walk} (respectively, \emph{$A$--$B$ path}) is a walk (respectively, path) in which the first vertex belongs to $A$ and the last to $B$.
For two disjoint vertex subsets $A$ and $B$ in $G$, $A$ and $B$ are {\em anti-complete} if there is no edge in $G$ with one endpoint in $A$ and the other in $B$.
The {\em complement} of a graph $G$ is denoted by $\overline{G}$ and is the graph with vertex set $V(G)$ satisfying $uv\in E(\overline{G})$ if and only if $uv\notin E(G)$ for every $u,v\in V(G)$.
For a graph $G$ and positive integer $k$ the graph $kG$ is the graph obtained by taking $k$ disjoint copies of $G$ and making the copies anti-complete to each other. Note that the graph $\overline{2K_t}$ is the complete bipartite graph $K_{t,t}$. 

When $Z$ is a vertex subset of $G$, $G-Z$ denotes the graph induced by $V(G)\setminus Z$.
We use $N[Z]$ to denote $\{u\in V(G)\ |\ u\in Z, \text{ or } \exists v\in Z \text{ satisfying } u\in N(v)\,\}$. 
When clear from context, we use $H\cap Z$ to mean $V(H)\cap Z$ and $H\setminus Z$ to mean $V(H)\setminus Z$, where $H$ is a subgraph of a graph $G$.
For a positive real number $\phi$, a \emph{$(Z,\phi)$-balanced separator} is a vertex subset $S$ such that every connected component of $G-S$ contains at most $\phi \cdot |Z|$ vertices from $Z$.
If $A$, $B$ are vertex subsets, then an \emph{$A$--$B$ separator} is a vertex subset $S$ such that every $A$--$B$ path in $G$ contains at least one vertex from $S$.
Let $\mathcal{F}$ be a family of vertex subsets of $G$. A \emph{fractional $(A,B)$-separator} using $\mathcal{F}$ in $G$ is an assignment $\{x_F\}_{F\in\mathcal{F}}$ of non-negative real numbers to elements of $\mathcal{F}$ such that $\sum_{\substack{F\in\mathcal{F}\\ F\cap P\neq\emptyset}}x_F \geq 1$ for every $A$--$B$ path $P$. 
A \emph{fractional cover} of $Z$ using $\mathcal{F}$ is an assignment $\{x_F\}_{F \in \mathcal{F}}$ of non-negative real numbers to elements of $\mathcal{F}$ such that $\sum_{\substack{F \in \mathcal{F} \\ v \in F}} x_F \geq 1$ for every $v \in Z$. 
The \emph{fractional cover number} of $Z$ with respect to $\mathcal{F}$ denoted $\mathrm{fcov}_{\mathcal{F}}(Z)$ is the minimum of $\sum_{F\in\mathcal{F}}x_F$ over all fractional covers $\{x_F\}_{F \in \mathcal{F}}$ of $Z$.
%
%
The \emph{cover number} of $Z$ using $\mathcal{F}$, denoted by $\mathrm{cov}_{\mathcal{F}}(Z)$, is the size of the smallest subfamily $\mathcal{F}'$ of $\mathcal{F}$ such that every vertex of $Z$ is contained in some element of $\mathcal{F}'$.
A family $\mathcal{F}$ of vertex subsets is a \emph{$(b,a)$-container family} of $G$, if $\alpha(F)\leq a$ for every element $F$ in $\mathcal{F}$ and for every vertex subset $Z$ satisfying $\alpha(Z) \leq b$, there exists some element $F$ of $\mathcal{F}$ such that $Z\subseteq F$.

A \emph{Bernoulli random variable} with success probability $p$, is one that takes value $1$ with probability $p$, and $0$ with probability $1-p$. 
We recall a version of the Chernoff bound. A stronger version of this result, along with its full proof, is available in \cite{mitzenmacher2017probability}.
\begin{proposition}[\cite{hagerup1990guided,mitzenmacher2017probability}]\label{thm:chernoff}
    Let $X_1, X_2, \dots, X_\ell$ be independent Bernoulli random variables, and let $X = \sum_{i \in [\ell]} X_i$. If $R \geq 6\,\mathbb{E}[X]$, then
    $$
    \mathbb{P}[X \geq R] \leq 2^{-R}.
    $$
\end{proposition}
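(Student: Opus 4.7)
The plan is to prove this via the standard exponential Markov (Chernoff) method and then verify that the constant $6$ in the hypothesis $R \geq 6\mathbb{E}[X]$ is large enough to yield the clean bound $2^{-R}$.

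First I would apply Markov's inequality to the random variable $e^{tX}$ for an arbitrary parameter $t > 0$ to get
\[
\mathbb{P}[X \geq R] \;=\; \mathbb{P}[e^{tX} \geq e^{tR}] \;\leq\; e^{-tR}\,\mathbb{E}[e^{tX}].
\]
Since the $X_i$ are independent, $\mathbb{E}[e^{tX}] = \prod_{i=1}^{\ell}\mathbb{E}[e^{tX_i}]$. For a Bernoulli variable $X_i$ with success probability $p_i$, a direct computation gives $\mathbb{E}[e^{tX_i}] = 1 + p_i(e^t - 1) \leq \exp(p_i(e^t-1))$, where the inequality uses $1+x \leq e^x$. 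Multiplying over $i$ and letting $\mu := \mathbb{E}[X] = \sum_i p_i$, we obtain
\[
\mathbb{P}[X \geq R] \;\leq\; \exp\bigl(\mu(e^t-1) - tR\bigr).
\]

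Next I would optimize the choice of $t$. Setting $t = \ln(R/\mu)$ (valid because $R \geq 6\mu > \mu$), we have $e^t = R/\mu$, and the right-hand side becomes $\exp(R - \mu - R\ln(R/\mu)) = e^{-\mu}\,(e\mu/R)^R$. Hence
\[
\mathbb{P}[X \geq R] \;\leq\; e^{-\mu}\left(\frac{e\mu}{R}\right)^{\!R} \;\leq\; \left(\frac{e\mu}{R}\right)^{\!R}.
\]

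Finally I would use the hypothesis $R \geq 6\mu$ to conclude. This gives $e\mu/R \leq e/6$, and since $e < 3$ we have $e/6 < 1/2$, so $(e\mu/R)^R \leq (1/2)^R = 2^{-R}$, as required. The only minor subtlety (and the one place where the specific constant $6$ is used) is verifying $e/6 < 1/2$, which holds with a small amount of room; any hypothesis $R \geq c\mu$ with $c > 2e$ would suffice for the same conclusion, and no step presents a real obstacle beyond this numerical check.
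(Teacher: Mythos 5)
Your proof is correct; note that the paper does not prove Proposition~\ref{thm:chernoff} at all but simply cites it (\cite{hagerup1990guided,mitzenmacher2017probability}), so there is no in-paper argument to compare against. Your derivation is the standard exponential-moment Chernoff argument, the optimization $t=\ln(R/\mu)$ and the numerical check $e/6<1/2$ are both valid, and the only omitted detail is the degenerate case $\mathbb{E}[X]=0$ (where $t=\ln(R/\mu)$ is undefined), which is handled trivially since then $X=0$ almost surely and the bound holds directly.
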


\section{Independence-Containers}\label{sec:containers}
%
For a real $\rho > 1$ and non-negative real $n$ we define $\overline{\log}_\rho(n)$ to be $\log_\rho(n)$ if $n > 0$ and $0$ otherwise.

\begin{lemma}\label{lem:containers1}
There exists an integer $c$ with the following property.
There exists an algorithm that takes as input $(G, \omega, k, b)$ where $\omega$ and $k$ are positive integers, $b \leq \omega$ is a non-negative integer, and $G$ is a $\overline{kK_\omega}$-free graph.
The algorithm runs in time $(n+1)^{  {(2\omega \cdot \overline{\log} (n) + k+ b)^{k+b+1}}} \cdot n^c$ and outputs a
$(b, a)$-container family ${\cal F}$ such that $|{\cal F}| \leq (n+1)^{  {(2\omega \cdot \overline{\log} (n) + k+b)^{k+b+1} }}$ and $a \leq  (2 \omega \cdot \overline{\log}(n) + k+b)^{k+b+1}$.
\end{lemma}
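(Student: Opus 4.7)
The plan is to prove the lemma by induction on $k+b$, leveraging two complementary reductions. The \emph{$k$-reduction}: for any independent set $I\subseteq V(G)$ of size $\omega$, the induced subgraph $G[N(I)]$ on the common neighborhood $N(I):=\bigcap_{w\in I}N(w)$ is $\overline{(k-1)K_\omega}$-free. Indeed, if $I_1,\dots,I_{k-1}$ were the parts of an induced $\overline{(k-1)K_\omega}$ inside $G[N(I)]$, then $I,I_1,\dots,I_{k-1}$ would form an induced $\overline{kK_\omega}$ in $G$, since $I$ is (by construction of $N(I)$) completely joined to every vertex of $N(I)\supseteq\bigcup_j I_j$. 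The \emph{$b$-reduction}: for any $v\in H$ with $\alpha(H)\leq b$, we have $\alpha(H\setminus N[v])\leq b-1$, since any independent set in $H\setminus N[v]$ extended by $v$ stays independent in $H$.

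Set $d:=2\omega\cdot\overline{\log}(n)+k+b$. The base cases are: $b=0$ (return $\mathcal F=\{\emptyset\}$); $\alpha(G)<\omega$, which includes $k=1$ since $\overline{K_\omega}$-free means $\alpha(G)<\omega$ (return $\mathcal F=\{V(G)\}$ with $a\leq\omega-1\leq d^{k+b+1}$); and $n\leq d$ (return $\mathcal F=\{V(G)\}$, which trivially satisfies $a\leq n\leq d^{k+b+1}$). In the main inductive step we have $\alpha(G)\geq\omega$, $k\geq 2$, $b\geq 1$, and $n>d$. We greedily fix an independent set $I=\{w_1,\dots,w_\omega\}$ of size $\omega$ in $G$. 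By the $k$-reduction we invoke induction on $G[N(I)]$ with parameters $(\omega,k-1,b)$ to obtain a container family $\mathcal F_A$; and for each candidate pivot $v\in V(G)\setminus N(I)$ we invoke induction on $G-N[v]$ with parameters $(\omega,k,b-1)$ to obtain a container family $\mathcal F_B^v$.

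For each target $H$ with $\alpha(H)\leq b$: if $H\subseteq N(I)$ then $H$ is directly covered by some $F\in\mathcal F_A$. Otherwise $H$ contains some vertex $v\in V\setminus N(I)$, and we build its container as $\{v\}\cup F_A\cup F_B$ with $F_A\in\mathcal F_A$ covering $H\cap N(v)\cap N(I)$ and $F_B\in\mathcal F_B^v$ covering $H\setminus N[v]$. The residual piece $H\cap N(v)\setminus N(I)$ is handled by iterating the pivot step on the induced subgraph $G[N(v)\setminus N(I)]$, each iteration picking a secondary pivot that shaves one further unit off $b$; after at most $b$ iterations the budget is exhausted and $H$ is fully covered either by the $\mathcal F_A$-type family at the final level, or by the base case $b=0$.

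The main obstacle is calibrating parameters to achieve the precise bounds $|\mathcal F|\leq(n+1)^{d^{k+b+1}}$ and $a\leq d^{k+b+1}$. Each recursive call reduces $k$ or $b$ by $1$, so the recursion has depth at most $k+b+1$; at each level the enumeration over pivots $v\in V(G)\setminus N(I)$ contributes at most $n+1$ choices, container independence numbers accumulate additively across the reductions, and container-family sizes accumulate multiplicatively, yielding the $(n+1)^{d^{k+b+1}}$ bound after the depth-$(k+b+1)$ recursion. The $2\omega\cdot\overline{\log}(n)$ slack in $d$ is what absorbs the overhead of the iteration on $N(v)\setminus N(I)$ (so that the per-level branching factor is bounded by $d$ rather than by $n$) together with the cost of greedily searching for independent sets of size $\omega$ at each recursive node, the latter contributing the $n^{O(1)}$ factor to the running time.
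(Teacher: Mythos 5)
Your two reductions are exactly the ones the paper uses (the common neighborhood of an independent set of size $\omega$ is $\overline{(k-1)K_\omega}$-free; deleting $N[v]$ for $v\in H$ drops $\alpha(H)$ by one), but the proposal is missing the mechanism that makes the recursion terminate quickly, and the step where you wave at it is where the argument breaks. Concretely: when $H\not\subseteq N(I)$ you pivot on $v\in H\setminus N(I)$ and are left with the residual $H\cap N(v)\setminus N(I)$. You claim this is handled by iterating the pivot step and that ``after at most $b$ iterations the budget is exhausted.'' That is false. The budget $b$ decreases only along the $H\setminus N[v']$ branches; the residual chain lives inside $N(v_1)\cap N(v_2)\cap\cdots$, i.e.\ the common neighborhood of a growing \emph{clique} $\{v_1,v_2,\dots\}\subseteq H$, and its independence number does not decrease (each $R_{i}\subseteq N(v_i)$ removes nothing independent). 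Since $\overline{kK_\omega}$-freeness does not bound $\omega(G)$, this clique --- and hence the iteration depth --- can be as large as $n$, and with up to $n$ pivot choices per level the family size becomes $n^{\Theta(n)}$, not $(n+1)^{d^{k+b+1}}$. Your remark that the $2\omega\overline{\log}(n)$ slack ``absorbs the overhead'' is an assertion with no supporting mechanism.

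The paper's fix is a degree-threshold dichotomy that your proposal lacks. Set $\rho=(1-\tfrac{1}{2\omega})^{-1}$ and let $H$ be the vertices of degree at least $n/\rho$. The independent set $I$ of size $\omega$ is sought \emph{only inside $H$}: then $Q=\bigcap_{v\in I}N(v)$ has $|Q|\geq n/2$, and one recurses on $G[Q]$ with $k-1$ and on $G-Q$ (which has at most $n/2$ vertices) with the same $(k,b)$, taking ${\cal F}_1\otimes{\cal F}_2$. If no such $I$ exists in $H$, then $\alpha(H)<\omega$, so some pivot $v\notin H$ exists; crucially $v$ is \emph{low-degree}, so $|N(v)|<n/\rho$ and the recursion on $G[N(v)]$ with the same $(k,b)$ decreases $\log_\rho n$ by one (the set $H$ itself is simply added to every container, which costs only $\omega-1$ in independence number). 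Every recursive call therefore decreases one of $\log_\rho n$, $k$, $b$, which is what produces the binomial-coefficient bound $2\omega\binom{\overline{\log}_\rho(n)+k+b}{k+b}$ and hence the stated quasi-polynomial size and poly-logarithmic independence number. Without restricting where $I$ is found and which vertices may serve as pivots, neither the depth of your residual iteration nor the size of $N(I)$ and $N(v)$ is controlled, and the claimed bounds do not follow.
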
 

\begin{proof}
We begin by describing the algorithm. If $b = 0$ the algorithm outputs ${\cal F} = \{\emptyset\}$. If $b \geq 1$ and $\alpha(G) \leq \omega$ the algorithm outputs ${\cal F} = \{V(G)\}$. Suppose now that $b \geq 1$ and $\alpha(G) > \omega$.
Define $\rho = (1 - \frac{1}{2\omega})^{-1}$, and observe that $1 < \rho \leq 2$. The algorithm considers the set $H$ of all vertices in $G$ of degree at least $n/\rho$. 
We first check by brute force in time $n^{\omega+2}$ whether $H$ contains an independent set $I$ of size $\omega$. 

Suppose first such a set $I$ exists. Observe that $k > 1$, because otherwise $I$ is an induced $\overline{kK_\omega}$ in $G$.
Define $Q = \bigcap_{v \in I}N(v)$, and note that $|Q| \geq n - \sum_{v \in I} |V(G) - N(v)| \geq \frac{n}{2}$ because every vertex in $I$ has degree at least $n(1-\frac{1}{2\omega})$.
We have that $G[Q]$ is $\overline{(k-1)K_\omega}$-free. Indeed, suppose that $G[Q]$ contains a set $Z$ inducing a $\overline{(k-1)K_\omega}$ then $Z \cup I$ is an induced $\overline{kK_\omega}$ in $G$, a contradiction. 
Then, the algorithm calls itself recursively on ($G[Q]$, $\omega$, $k-1$, $b$) and obtains a family ${\cal F}_1$. 
It also calls itself recursively on ($G - Q$, $\omega$, $k$, $b$) and obtains a family ${\cal F}_2$.
The algorithm returns ${\cal F} = {\cal F}_1 \otimes {\cal F}_2$.

If no independent set $I$ of size $\omega$ exists in $H$, then $V(G) - H$ is non-empty since $\alpha(G) > \omega > \alpha(H)$.
The algorithm iterates through every vertex $v \notin H$ and calls itself recursively on ($G[N(v)]$, $\omega$, $k$, $b$) and obtains a family ${\cal F}^v_1$. It also calls itself recursively on  ($G - N[v]$, $\omega$, $k$, $b-1$) and obtains a family ${\cal F}^v_2$.
The algorithm then returns 
$${\cal F} = \{H\} \otimes\left( \bigcup_{v \notin H} \{\{v\}\} \otimes {\cal F}_1^v \otimes {\cal F}_2^v \right).$$
This completes the description of the algorithm. Each recursive call of the algorithm is made on an instance with strictly fewer vertices than $G$. Thus the algorithm always terminates and outputs a non-empty family ${\cal F}$. 
First we show that ${\cal F}$ covers every set with independence number at most $b$.

\begin{claim}\label{clm:family_covers1}
For every set $S \subseteq V(G)$ such that $\alpha(S) \leq b$ there exists an $X \in {\cal F}$ such that $S \subseteq X$.
\end{claim}

%

\begin{proof}
We proceed by induction on $|V(G)|$. If $b=0$ then $S=\emptyset$ and the claim holds. 
Similarly, if $\alpha(G) \leq \omega$ then ${\cal F}=\{V(G)\}$ so the claim holds in this case as well.
We now consider the case that $b > 0$ and $\alpha(G) > \omega$.
Suppose first that there exists an independent set $I$ in $H$ of size $\omega$. By the induction hypothesis ${\cal F}_1$ contains a set $X_1$ that contains $S \cap Q$ and similarly ${\cal F}_2$ contains a set $X_2$ that contains $S - Q$. But then $X_1 \cup X_2 \in {\cal F}$ contains $S$. 

Suppose now that no such independent set $I$ exists. 
Since every set in ${\cal F}$ contains $H$ we may assume that $S - H$ is non-empty. 
Let $v$ be a vertex in $S - H$. 
By the induction hypothesis we have that ${\cal F}_1^v$ contains a set $X_1$ such that $S \cap N(v) \subseteq X_1$. Further, since $S - N[v]$ contains no neighbors of $v$ we have that $\alpha(S - N[v]) \leq \alpha(S) - 1 \leq b - 1$. Thus, by the induction hypothesis 
 ${\cal F}_2^v$ contains a set $X_2$ such that $S - N[v] \subseteq X_2$.
But then $S \subseteq \{v\} \cup X_1 \cup X_2$ and $\{v\} \cup X_1 \cup X_2 \in {\cal F}$, proving the claim.
\end{proof}

Now we upper bound the independence numbers of the sets in ${\cal F}$.

\begin{claim}\label{clm:alphaBound1}
For every $X \in {\cal F}$ we have that $\alpha(X) \leq 2\omega {\overline{\log}_\rho(n) + k+b \choose k+b} - \omega$.
\end{claim}

\begin{proof}
We proceed by induction on $n$. If $b=0$ or $\alpha(G) \leq \omega$ then we have $\alpha(X) \leq \omega \leq 2\omega {\overline{\log}_\rho(n) + k+b \choose k+b} - \omega$. Suppose now that $b \geq 1$ and $\alpha(G) > \omega$. Since $\omega \geq 1$ this implies that $n \geq \alpha(G) \geq 2$. 

If $H$ contains an independent set $I$ of size $\omega$, then by the induction hypothesis we have that for every $X_1 \in {\cal F}_1$ we have $\alpha(X_1) \leq 2\omega {\overline{\log}_\rho(n) + k-1+b \choose k-1+b} - \omega$.
Similarly, by the induction hypothesis (and using the fact that $|Q| \geq \frac{n}{2}$) we have that for every $X_2 \in {\cal F}_2$ we have 
$\alpha(X_2) \leq 2\omega {\overline{\log}_\rho(n)-1 + k+b \choose k+b} - \omega$.
It follows that for every $X \in {\cal F}$  we have 
$$\alpha(X) \leq 
2\omega {\overline{\log}_\rho(n) + k-1+b \choose k-1+b} - \omega +
2\omega {\overline{\log}_\rho(n)-1 + k+b \choose k+b} - \omega
\leq
2\omega {\overline{\log}_\rho(n) + k+b \choose k+b} - \omega.
$$

If $H$ does not contain an independent set $I$ of size $\omega$ then $\alpha(H) \leq \omega - 1$. 
Further, by the induction hypothesis (and using the fact that $|N(v)| < n/\rho$) we have that for every $v \notin H$ and every set $X_1 \in {\cal F}_1^v$ we have 
$\alpha(X_1) \leq 2\omega {\overline{\log}_\rho(n) - 1 + k+b \choose k+b} - \omega$. Additionally, for every $v \notin H$ and every set $X_2 \in {\cal F}_2^v$ we have $\alpha(X_2) \leq 2\omega {\overline{\log}_\rho(n) + k+b-1 \choose k+b-1} - \omega$. Thus, for every $X \in {\cal F}$  we have 
\begin{align*}
\alpha(X) & \leq \omega - 1 + 1 + 2\omega {\overline{\log}_\rho(n) - 1 + k+b \choose k+b} - \omega + 2\omega {\overline{\log}_\rho(n) + k+b-1 \choose k+b-1} - \omega \\
& \leq 2\omega {\overline{\log}_\rho(n) + k+b \choose k+b} - \omega \mbox{.}
\end{align*}
This concludes the proof of the claim.
\end{proof}

Finally, we upper bound the size of ${\cal F}$.
\begin{claim}\label{clm:FBound1}
$|{\cal F}| \leq (n+1)^{ 2 {\overline{\log}_\rho(n) + k+b \choose k+b} - 1}$.
\end{claim}

\begin{proof}
The proof of this claim closely follows the proof of Claim~\ref{clm:alphaBound1}.
If $b=0$ or $\alpha(G) \leq \omega$ then we have $|{\cal F}| = 1 \leq n+1$. Suppose now that $b \geq 1$ and $\alpha(G) > \omega$. Since $\omega \geq 1$ this implies that $n \geq \alpha(G) \geq 2$. 

If $H$ contains an independent set $I$ of size $\omega$ then, by the induction hypothesis we have that $|{\cal F}_1| \leq (n+1)^{2 {\overline{\log}_\rho(n) + k-1+b \choose k-1+b} - 1}$.
Similarly, by the induction hypothesis (and using the fact that $|Q| \geq \frac{n}{2}$) we have that $|{\cal F}_2| \leq (n+1)^{2 {\overline{\log}_\rho(n)-1 + k+b \choose k+b} - 1}$.
It follows that
$$|{\cal F}| \leq 
(n+1)^{2 {\overline{\log}_\rho(n) + k-1+b \choose k-1+b} - 1} \cdot
(n+1)^{2 {\overline{\log}_\rho(n)-1 + k+b \choose k+b} - 1}
\leq
(n+1)^{ 2 {\overline{\log}_\rho(n) + k+b \choose k+b} - 1 }.
$$

If $H$ does not contain an independent set $I$ of size $\omega$, by the induction hypothesis (and using the fact that $|N(v)| < n/\rho$) we have that for every $v \notin H$ we have 
$|{\cal F}_1^v| \leq (n+1)^{2 {\overline{\log}_\rho(n) - 1 + k+b \choose k+b} - 1}$. Additionally, for every $v \notin H$ we have $|{\cal F}_2^v| \leq (n+1)^{2 {\overline{\log}_\rho(n) + k+b-1 \choose k+b-1} - 1}$. Hence we may conclude that  
\begin{align*}
|{\cal F}| & \leq 
n \cdot
(n+1)^{2 {\overline{\log}_\rho(n) - 1 + k+b \choose k+b} - 1} \cdot
(n+1)^{2 {\overline{\log}_\rho(n) + k+b-1 \choose k+b-1} - 1}\\
& \leq(n+1)^{ 2 {\overline{\log}_\rho(n) + k+b \choose k+b} - 1 } \mbox{.}
\end{align*}
This proves the claim.
\end{proof}

Since $\frac{1}{\log(\rho)} \leq 2\omega$, Claim~\ref{clm:FBound1} implies the claimed size bound on ${\cal F}$, while Claim~\ref{clm:alphaBound1} implies the claimed bound on the independence number of every set $X$ in ${\cal F}$. 
The upper bound of Claim~\ref{clm:FBound1} would apply even if duplicates of the same set in ${\cal F}$ are counted as many times as they are generated by the algorithm. Since the algorithm only spends polynomial time per set in ${\cal F}$ (counting duplicates), the running time bound follows.
\end{proof}

Lemma~\ref{lem:containers1} gives a quasi-polynomial size container family for constant size independent sets. We want to also have quasi-polynomial size container families even for poly-logarithmic size independent sets. The next lemma achieves this.

\begin{lemma}\label{lem:containers2}
There exists an algorithm that takes as input a four-tuple $(G, \omega, k, b)$ where $\omega$ and $k$ are positive integers, $b$ is a non-negative integer, and $G$ is a $\overline{kK_\omega}$-free graph. The algorithm runs in time  $(n+1)^{{(3\omega \cdot \overline{\log} (n) + k)^{k+\omega+1}} \cdot b^{2\omega k}} \cdot n^{O(1)}$
and outputs a $(b, a)$-container family  ${\cal F}$ such that
$|{\cal F}| \leq (n+1)^{{(3\omega \cdot \overline{\log} (n) + k)^{k+\omega+1}} \cdot b^{2\omega k}}$ and
$a \leq (3 \omega \cdot \overline{\log}(n) + k)^{k+\omega+1} \cdot 2^k\cdot b^{\omega k}$
%
\end{lemma}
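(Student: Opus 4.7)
The plan is to use Lemma~\ref{lem:containers1} as a building block and to extend its output to larger $b$ via a decomposition argument. Concretely, first apply Lemma~\ref{lem:containers1} to the input $(G, \omega, k, \omega)$ to obtain a $(\omega, a_1)$-container family $\mathcal{F}_1$ with $a_1 \leq (2\omega \cdot \overline{\log}(n) + k + \omega)^{k+\omega+1} \leq (3\omega \cdot \overline{\log}(n) + k)^{k+\omega+1}$ and $|\mathcal{F}_1| \leq (n+1)^{a_1}$. This family already covers every vertex subset of $G$ with independence number at most $\omega$. To handle general $b$, I would then prove a decomposition lemma stating that in any $\overline{kK_\omega}$-free graph, every vertex set $H$ with $\alpha_G(H) \leq b$ admits a partition into at most $T \leq 2^k \cdot b^{\omega k}$ parts, each of independence number at most $\omega$. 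The final family is $\mathcal{F} = \{F_1 \cup \cdots \cup F_T : F_j \in \mathcal{F}_1\}$: for any $H$ with $\alpha_G(H) \leq b$, partition $H$ via the decomposition lemma into $T$ sets of independence number at most $\omega$, cover each by an element of $\mathcal{F}_1$, and observe that the union of these covers lies in $\mathcal{F}$ and contains $H$. The claimed bounds follow by computing $a \leq T \cdot a_1 \leq (3\omega \cdot \overline{\log}(n) + k)^{k+\omega+1} \cdot 2^k \cdot b^{\omega k}$ and $|\mathcal{F}| \leq |\mathcal{F}_1|^T \leq (n+1)^{(3\omega \cdot \overline{\log}(n) + k)^{k+\omega+1} \cdot b^{2\omega k}}$, using $2^k \leq b^{\omega k}$ for $b \geq 2$ (small cases of $b$ are handled directly by Lemma~\ref{lem:containers1}).

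The decomposition lemma is proved by induction on $k$. For the base case $k = 1$, the graph $G$ is $\overline{K_\omega}$-free, hence $\alpha(G) \leq \omega - 1$, so the trivial single-part decomposition suffices. For the inductive step, if $\alpha(H) \leq \omega$ the result is immediate; otherwise, find an independent set $I \subseteq H$ of size $\omega$ and set $Q_I = \bigcap_{v \in I} N_G(v)$. Then $G[Q_I]$ is $\overline{(k-1)K_\omega}$-free, since an induced $\overline{(k-1)K_\omega}$ inside $G[Q_I]$ together with $I$ would yield an induced $\overline{kK_\omega}$ in $G$. Consequently $H \cap Q_I$ decomposes into at most $T(k-1, b)$ parts by the inductive hypothesis. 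Include $I$ itself as one additional part, and decompose the remaining set $H \setminus (Q_I \cup I)$ based on the adjacency pattern of each vertex to $I$; recurse on each resulting piece with the same $k$ and a strictly smaller value of $b$.

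The main obstacle is to control this recursion so as to obtain $T(k, b) \leq 2^k \cdot b^{\omega k}$, which is polynomial in $b$ of degree $\omega k$, rather than an expression exponential in $b$. A naive branching — one branch per vertex of $I$, each reducing $b$ by one — yields a recursion roughly of the form $T(k, b) \leq T(k-1, b) + 1 + \omega \cdot T(k, b-1)$, whose solution has magnitude $\omega^{\Omega(b)}$. Achieving the polynomial bound therefore requires a more refined decomposition of $H \setminus (Q_I \cup I)$: for instance, grouping branches so as to amortize the $b$-reductions across several levels of the $k$-recursion, or exploiting the $\overline{kK_\omega}$-free structure to force the decrease of $b$ to be coupled with a decrease of $k$ in each recursive call. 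This quantitative accounting is the crux of the proof, and is the step that genuinely differs from the recursion underlying Lemma~\ref{lem:containers1}.
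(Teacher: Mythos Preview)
Your proposal has a genuine gap that you yourself flag: you never carry out the ``more refined decomposition'' needed to make $T(k,b)$ polynomial in $b$. Your naive recursion $T(k,b) \leq 1 + T(k-1,b) + \omega\cdot T(k,b-1)$ indeed blows up, and nothing in your sketch (grouping branches, amortizing, etc.) indicates how to fix it. Picking an independent set $I\subseteq H$ of size exactly $\omega$ gives you no control over $\alpha(H_Z)$ for the adjacency classes $H_Z$, because $I$ is not maximum in $H$; and the alternative covering $H\setminus(Q_I\cup I)\subseteq \bigcup_{v\in I}(H\setminus N[v])$ only drops $b$ by one per branch, which is exactly the bad recursion.

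The paper avoids this entirely by a different idea: instead of choosing a small independent set inside $H$, it \emph{enumerates} all independent sets $I$ in $G$ of size at most $b$, thereby ``guessing'' a maximum independent set of the unknown target set $S$. For the correct guess, maximality of $I$ in $S$ forces $\alpha(S_Z)\leq |Z|$ for every adjacency class $S_Z=\{v\in S:N(v)\cap I=Z\}$; so whenever $|Z|<\omega$ the piece $S_Z$ is already handled by a single call to Lemma~\ref{lem:containers1}. For $|Z|=\omega$ the common-neighbourhood graph is $\overline{(k-1)K_\omega}$-free and one recurses with $k-1$ and the \emph{same} $b$. Thus there is no recursion on $b$ at all---only on $k$---and the $n^b$ cost of the enumeration is absorbed into the family-size bound. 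The key idea you are missing is precisely this enumeration of the \emph{full} maximum independent set of $S$ (of size up to $b$) rather than a size-$\omega$ fragment.
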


\begin{proof}
%
Let $p$ be the upper bound on the maximum value of $\alpha(X)$ for sets $X$ in the family ${\cal F}$ obtained by the algorithm of Lemma~\ref{lem:containers1} on $(G, \omega, k, \omega)$. In particular $p \leq (3 \omega \cdot \overline{\log}(n) + k)^{k+\omega+1}$.
Similarly let $q$ be the upper bound on the size of the family ${\cal F}$ obtained by the algorithm of Lemma~\ref{lem:containers1} on $(G, \omega, k, \omega)$. In particular $q \leq (n+1)^{  {(3\omega \cdot \overline{\log} (n) + k)^{k+\omega+1} }}$.

We begin by describing the algorithm. If $b \leq \omega$ then the algorithm outputs the family ${\cal F}$ of Lemma~\ref{lem:containers1}. 
The bounds $p$ and $q$ on the independence number and size of the family ${\cal F}$ respectively are below the claimed upper bounds. 
Throughout the rest of the algorithm we assume that $b > \omega$, as the algorithm never changes $b$ in its recursive calls. Observe that in this case, since $G$ is $\overline{kK_\omega}$-free, we have that $k \geq 2$.
If $\alpha(G) \leq b$, the algorithm outputs ${\cal F} = \{V(G)\}$. Suppose now that $\alpha(G) > b > \omega$. 

The algorithm iterates over every independent set $I$ of $G$ of size at most $b$. For each such independent set $I$, the algorithm proceeds as follows.
First the algorithm iterates over every non-empty subset $Z$ of $I$ of size at most $\omega-1$. 
Let $V_Z^I$ be the set of vertices $v$ in $V(G)$ such that $N(v) \cap I = Z$. The algorithm obtains a family ${\cal F}_Z^I$ by running the algorithm of Lemma~\ref{lem:containers1} on $(G[V_Z^I], \omega, k, \omega)$.
Next the algorithm iterates over every subset $Z$ of $I$ of size exactly $\omega$. Let $V_{\supseteq Z}^I$ be the set of vertices $v$ in $V(G)$ such that $N(v) \cap I \supseteq Z$.
Observe that $Z$ together with a $\overline{(k-1)K_\omega}$ in $G[V_{\supseteq Z}^I]$ would yield a $\overline{kK_\omega}$ in $G$. Hence $G[V_{\supseteq Z}^I]$ is $\overline{(k-1)K_\omega}$-free.
The algorithm obtains a family ${\cal F}_Z^I$ by running itself recursively on $(G[V_{\supseteq Z}^I], \omega, k-1, b)$.
%
Finally the algorithm outputs
$${\cal F} = \bigcup_{I}\left(\{I\} \otimes \bigotimes_{\substack{\emptyset \subset Z \subseteq I ~\text{s.t.} \\ |Z| \leq \omega}} {\cal F}_Z^I \right).$$
Here the union is taken over all non-empty independent sets $I$ of size at most $b$. The algorithm only makes recursive calls on instances with strictly smaller value of $k$, hence it always terminates and outputs a family ${\cal F}$. Next, we show that ${\cal F}$ covers every set of independence number at most $b$.
\begin{claim}\label{clm:family_covers2}
For every set $S \subseteq V(G)$ such that $\alpha(S) \leq b$ there exists an $X \in {\cal F}$ such that $S \subseteq X$.
\end{claim}

\begin{proof}
We proceed by induction on $k$. 
For the base case $k = 1$, $G$ is $\overline{K_\omega}$-free and hence $\alpha(G) < \omega$. If $b \leq \omega$
the statement holds by Lemma~\ref{lem:containers1}. Otherwise $b > \omega > \alpha(G)$, so $\alpha(G) \leq b$
and $\mathcal{F} = \{V(G)\}$, which contains every $S$ with $\alpha(S) \leq b$.

Now assume the statement holds for every $k'<k$ for some arbitrary but fixed positive integer $k >1$.
Similar to the base case, if $b \leq \omega$ or if $\alpha(G) \leq b$ then the statement of the claim holds, therefore assume $\alpha(G) > b > \omega$. 
Let $S$ now be a vertex set such that $\alpha(S) \leq b$. Let $I$ be a maximum size independent set in $S$. 
For each non-empty subset $Z$ of $I$ of size at most $\omega-1$ define $S_Z$ to be the set of vertices $v$ in $S$ such that $N(v) \cap I = Z$. 
By the maximality of $I$, $\alpha(S_Z) \leq |Z| \leq \omega$. Thus, by Lemma~\ref{lem:containers1} there exists a set $X_Z$ in ${\cal F}_Z^I$ such that $S_Z \subseteq X_Z$. 
For each subset $Z$ of $I$ of size exactly $\omega$, let $S_{\supseteq Z}$ be the set of vertices $v$ in $S$ such that $N(v) \cap I \supseteq Z$. Note that $S_{\supseteq Z} \subseteq V_{\supseteq Z}^I$. Thus, by the induction hypothesis there exists a set $X_Z$ in ${\cal F}_Z^I$ such that $S_{\supseteq Z} \subseteq X_Z$. 
By the maximality of $I$ every vertex in $S-I$ has a neighbor in $I$. Thus $I \cup \bigcup_{Z} X_Z$, where the union is taken over all non-empty subsets $Z$
of $I$ of size at most $\omega$, contains $S$ and is an element of ${\cal F}$. This proves the claim. 
\end{proof}
Next we upper bound the independence number of every set in ${\cal F}$.

\begin{claim}\label{clm:alphaBound2}
For every $X \in {\cal F}$ we have that $\alpha(X) \leq p \cdot 2^k \cdot b^{\omega k}$.
%
\end{claim}

\begin{proof}
We proceed by induction on $k$. 
For the base case $k = 1$, $G$ is $\overline{K_\omega}$-free and hence $\alpha(G) < \omega$. 
Let $X$ be a set in ${\cal F}$.
If $b \leq \omega$, then $\alpha(X) \leq p$ by Lemma \ref{lem:containers1}, and so the claim holds.  
Otherwise $b > \omega > \alpha(G)$, so $\alpha(X) \leq b$ and the claim is true as well.

Now assume the statement holds for every $k'<k$ for some arbitrary but fixed positive integer $k >1$.
Similar to the base case, if $b \leq \omega$ or if $\alpha(G) \leq b$ then the statement of the claim holds, therefore assume $\alpha(G) > b > \omega$.
In this case, $X = I \cup \bigcup_Z X_Z$, where $I$ is an independent set in $G$ of size at most $b$, the union is taken over all non-empty subsets $Z$ of $I$ of size at most $\omega$, and $X_Z \in {\cal F}_Z^I$ for each such $Z$.
For each $Z$ such that $|Z| < \omega$,  Lemma~\ref{lem:containers1} yields that $\alpha(X_Z) \leq p$, which of course is at most $p \cdot 2^{k-1} \cdot b^{\omega (k-1)}$. For each $Z$ such that $|Z| = \omega$,  the induction hypothesis yields that $\alpha(X_Z) \leq p \cdot 2^{k-1} \cdot b^{\omega (k-1)}$.
Since there are at most $b^\omega$ non-empty subsets $Z$ of $I$ of size at most $\omega$, we obtain
$$\alpha(X) \leq b + b^\omega \cdot p \cdot 2^{k-1} \cdot b^{\omega (k-1)} \leq p \cdot 2^k \cdot b^{\omega k}.$$ 
This proves the claim.
\end{proof}

Finally we upper bound the size of ${\cal F}$.

\begin{claim}\label{clm:FBound2}
$|{\cal F}| \leq q^{b^{2\omega k}}$.
\end{claim}

\begin{proof}
We proceed by induction on $k$. 
For the base case $k = 1$, $G$ is $\overline{K_\omega}$-free and hence $\alpha(G) < \omega$. 
If $b \leq \omega$ then $|{\cal F}|  \leq q$ by Lemma \ref{lem:containers1}, and so the claim holds.  
Otherwise $b > \omega > \alpha(G)$, so $|{\cal F}| = 1$ and so the claim  is true as well.

Now assume the statement holds for every $k'<k$ for some arbitrary but fixed positive integer $k >1$.
Similar to the base case, if $b \leq \omega$ or if $\alpha(G) \leq b$ then the statement of the claim holds, therefore assume $\alpha(G) > b > \omega$. 
Otherwise ${\cal F}$ is the union over at most $n^b$ products (one for each choice of $I$), where each family ${\cal F}_Z^I$ in the product has size at most $q$ (by Lemma~\ref{lem:containers1}, if $|Z| < \omega$) or at most $q^{b^{2\omega (k-1)}}$ (by the induction hypothesis, if $|Z| = \omega$). Since $k,b,\omega$ are positive integers we have that $q \leq q^{b^{2\omega (k-1)}}$. Further, as there are at most $b^\omega$ choices for $Z$ it follows that 
$$|{\cal F}| \leq n^b \cdot (q^{b^{2\omega (k-1)}})^{b^\omega} \leq ((q^{b^{2\omega (k-1)}})^{b^\omega})^2 \leq q^{b^{2\omega k}}.$$
This proves the claim.
\end{proof}

Putting everything together, Claim~\ref{clm:FBound2} implies the claimed size bound on ${\cal F}$, while Claim~\ref{clm:alphaBound2} implies the bound on the independence number of every set $X$ in ${\cal F}$. 
The upper bound of Claim~\ref{clm:FBound2} would apply even if duplicates of the same set in ${\cal F}$ are counted as many times as they are generated by the algorithm. Since the algorithm only spends polynomial time per set in ${\cal F}$ (counting duplicates), the running time bound follows.    
\end{proof}

Lemma~\ref{lem:containers2} immediately implies that for every hereditary class ${\cal C}$, if there exists a positive integer $k$ such that $\overline{kK_k} \notin {\cal C}$, then for every graph $G \in {\cal C}$ and every integer $b$ which is polynomial in $\log n$, $G$ contains a $(b, a)$-container family ${\cal F}$ of quasi-polynomial size, with $a$ polynomial in $\log n$. 
We complete the section by showing that the restriction that  ${\cal C}$ is $\overline{kK_k}$-free is necessary. In particular, in every hereditary class ${\cal C}$ of graphs that contains $\overline{kK_k}$ for every integer $k \geq 0$, even $(1, a)$-container families require size at least $2^{n/2a}$, as can be observed by applying Lemma~\ref{lem:containerLowerBoundGraph} with $k = \frac{n}{2a}$ and $\omega = 2a$.

\begin{lemma}\label{lem:containerLowerBoundGraph}
Let $a, b, k, \omega$ be positive integers such that $b \leq a < \omega$ and ${\cal F}$ be a $(b, a)$-container family for $\overline{kK_\omega}$. Then 
$|{\cal F}| \geq {\omega \choose b}^k / {a \choose b}^k$.
\end{lemma}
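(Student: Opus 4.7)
The plan is to exploit the highly symmetric structure of $\overline{kK_\omega}$ and run a direct double-counting (pigeonhole) argument.

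First I would unpack the graph. The graph $kK_\omega$ is a disjoint union of $k$ cliques of size $\omega$, with no edges between the cliques, so its complement $\overline{kK_\omega}$ is the complete multipartite graph with $k$ parts $V_1,\dots,V_k$ of size $\omega$ each; each $V_i$ is independent, and all cross-part pairs are edges. Consequently, for any vertex subset $S$, the independence number in $\overline{kK_\omega}$ is $\alpha(S)=\max_i|S\cap V_i|$. In particular, $\alpha(S)\le b$ iff $|S\cap V_i|\le b$ for every $i$, and $\alpha(S)\le a$ iff $|S\cap V_i|\le a$ for every $i$.

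Next I would isolate a convenient family of ``hard'' test sets: let $\mathcal{H}$ be the collection of all $H\subseteq V(\overline{kK_\omega})$ with $|H\cap V_i|=b$ for every $i\in[k]$. Each $H\in\mathcal{H}$ is a product of one $b$-subset from each $V_i$, so $|\mathcal{H}|=\binom{\omega}{b}^k$, and $\alpha(H)=b$, so every $H\in\mathcal{H}$ must be contained in some member of $\mathcal{F}$. Now fix any $F\in\mathcal{F}$. Since $\alpha(F)\le a$ we have $|F\cap V_i|\le a$ for each $i$, so the number of $H\in\mathcal{H}$ with $H\subseteq F$ is exactly $\prod_{i=1}^k\binom{|F\cap V_i|}{b}\le\binom{a}{b}^k$ (here we use $b\le a<\omega$ so the binomials are monotone in the usual range).

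Finally I would combine the two counts: since the sets of $\mathcal{F}$ must together cover all of $\mathcal{H}$, we get
\[
\binom{\omega}{b}^{k}=|\mathcal{H}|\le\sum_{F\in\mathcal{F}}|\{H\in\mathcal{H}:H\subseteq F\}|\le|\mathcal{F}|\cdot\binom{a}{b}^{k},
\]
which rearranges to the claimed bound $|\mathcal{F}|\ge\binom{\omega}{b}^{k}/\binom{a}{b}^{k}$. There is no real obstacle here; the only thing to verify carefully is the formula $\alpha(S)=\max_i|S\cap V_i|$ in the complete multipartite graph (to justify both ``each $H\in\mathcal{H}$ has $\alpha(H)\le b$'' and ``each covering $F$ has $|F\cap V_i|\le a$''), and the elementary monotonicity $\binom{x}{b}\le\binom{a}{b}$ for $b\le x\le a$, which relies on the hypothesis $b\le a$.
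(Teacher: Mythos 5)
Your proof is correct and follows essentially the same route as the paper's: both count the family of sets meeting each part $V_i$ in exactly $b$ vertices (there are $\binom{\omega}{b}^k$ of them, each with independence number $b$) and observe that any $F\in\mathcal{F}$, having $|F\cap V_i|\le\alpha(F)\le a$ for each $i$, can cover at most $\binom{a}{b}^k$ of them. The only difference is that you spell out the identity $\alpha(S)=\max_i|S\cap V_i|$ explicitly, which the paper leaves implicit.
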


\begin{proof}
Let $V(\overline{kK_\omega}) = V_1 \cup \ldots \cup V_k$ where $V_i$ is the vertex set of the $i$'th copy of 
$\overline{K_\omega}$.
Let ${\cal Q}$ be the family of all subsets $Q$ of $V(\overline{kK_\omega})$ such that $|Q \cap V_i| = b$ for every $i\in[k]$.
It follows that $|{\cal Q}| = {\omega \choose b}^k$ and that $\alpha(Q) = b$ for every $Q \in {\cal Q}$.
Let now ${\cal F}$ be a $(b, a)$-container family for $\overline{kK_\omega}$.
For every $F \in {\cal F}$ and every $i\in[k]$ it holds that $|F \cap V_i| \leq \alpha(F) \leq a$. Hence $F$ covers at most ${a \choose b}^k$ sets in ${\cal Q}$. 
Thus $|{\cal F}| \geq {\omega \choose b}^k / {a \choose b}^k$, as claimed. 
\end{proof}

We are now in position to prove Theorem~\ref{thm:containerIff}.

\MainThmContainer*
\begin{proof}
    Let ${\cal C}$ be a hereditary class of graphs, and $b : \mathbb{N} \rightarrow \mathbb{N}$ be a function such that $b(n) = O((\log n)^{O(1)})$.
    
    Suppose first that there exists a positive integer $k$ such that  $\overline{kK_k} \notin {\cal C}$, and let
    $a(n) = (6k\cdot \overline{\log}(n))^{2k+1}\cdot 2^k \cdot (b(n))^{k^2}$. Note that $a(n)$ is a poly-logarithmic function.
    Let $G$ be an arbitrary graph in ${\cal C}$, $n = |V(G)|$, and ${\cal F}$ be the $(b(n), a(n))$-container family for $G$ guaranteed by Lemma~\ref{lem:containers2}. Then ${\cal F}$ has quasi-polynomial size. This concludes the proof of the implication {\em (ii) $\rightarrow$ (i)}.
    
    For the implication {\em (i) $\rightarrow$ (ii)}, suppose for contradiction that there exists a poly-logarithmic function $a : \mathbb{N} \rightarrow \mathbb{N}$ and a quasi-polynomial function $q : \mathbb{N} \rightarrow \mathbb{N}$ such that for every $n$-vertex graph $G \in {\cal C}$, $G$ has a $(b(n), a(n))$-container family ${\cal F}$ of size at most $q(n)$, and that $\overline{kK_k} \in {\cal C}$ for every positive integer $k$.
    Let $n$ be a sufficiently large perfect square such that $2a(n) \leq \sqrt{n}$ and $q(n) < 2^{\sqrt{n}}$, and set $k = \sqrt{n}$.
    Let ${\cal F}$ be the $(b(n), a(n))$-container family of $\overline{kK_k}$ of size at most $q(n)$.
    By Lemma~\ref{lem:containerLowerBoundGraph} we have that 
    $$|{\cal F}| \geq \left(\frac{{k \choose b(n)}}{{a(n) \choose b(n)}}\right)^k \geq 2^k = 2^{\sqrt{n}} > q(n).$$
    This contradicts that $|{\cal F}| \leq q(n)$.
\end{proof}


%
%


 \section{$A$--$B$ separators with Small Independence Number}\label{sec:ab_seps}
    In this section we prove the following theorem:

\begin{restatable}{theorem}{ABSeparatorRounding}
\label{thm:A-B_separator}
Let $G$ be a graph, $a$ be a positive integer, $\mathcal{F}$ be a family of vertex subsets such that $\alpha(F) \leq a$ for every $F\in\mathcal{F}$, $A, B \subseteq V(G)$ be  vertex subsets and $\{x_F\}_{F \in \mathcal{F}}$ be a fractional $(A,B)$-separator in $G$. Then, there exists an $A$--$B$ separator $S$ in $G$ such that $\mathrm{fcov}_{\mathcal{F}}(S)$ is at most $12 \cdot \log 2n \cdot a \cdot \sum_{F \in \mathcal{F}} x_F$.
\end{restatable}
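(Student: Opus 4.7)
The plan is to carry out an LP-rounding argument via region growing, generalizing Korchemna et al.'s technique from clique families to families with $\alpha(F)\leq a$. The first step is to notice that it suffices to hit every \emph{induced} $A$--$B$ path: every $A$--$B$ path contains an induced $A$--$B$ subpath, and so $\{x_F\}$ remains a fractional $(A,B)$-separator on induced paths. The key combinatorial bound, generalizing the clique case ($a=1$), is that for any induced $A$--$B$ path $P$ and any $F\subseteq V(G)$ with $\alpha(G[F])\leq a$, we have $|V(P)\cap F|\leq 2a$. The reason: since $P$ is induced in $G$, the subgraph $G[V(P)\cap F]$ coincides with $P[V(P)\cap F]$, which is a disjoint union of subpaths of $P$; such a disjoint union with $k$ vertices has independence number at least $\lceil k/2\rceil$, and this must be at most $\alpha(G[F])\leq a$, giving $k\leq 2a$.

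Second, I would define vertex weights $\ell(v)=\sum_{F\ni v}x_F$ and the $\ell$-weighted distance $d(v)$ from $A$ to $v$. For any induced $A$--$B$ path $P$ we have $\sum_{v\in P}\ell(v)=\sum_F x_F\cdot|V(P)\cap F|\geq\sum_{F\cap P\neq\emptyset}x_F\geq 1$, so $d(b)\geq 1$ for every $b\in B$. For any $\tau\in[0,1)$, the shell $S_\tau=\{v:d(v)-\ell(v)<\tau\leq d(v)\}$ is an $A$--$B$ separator: tracing any induced $A$--$B$ path, $d$ crosses $\tau$ at some vertex, which lies in $S_\tau$. To bound $\mathrm{fcov}_{\mathcal{F}}(S_\tau)$ I would use the LP-duality characterization $\mathrm{fcov}_{\mathcal{F}}(S)=\max\{\sum_{v\in S}y_v:y\geq 0,\ \sum_{v\in F\cap S}y_v\leq 1\ \forall F\}$ together with the identity
\[
\sum_{v\in S}y_v\,\ell(v)\;=\;\sum_F x_F\sum_{v\in F\cap S}y_v\;\leq\;\sum_F x_F.
\]
Hence whenever every $v\in S$ satisfies $\ell(v)\geq\theta$, we automatically get $\mathrm{fcov}_{\mathcal{F}}(S)\leq\sum_F x_F/\theta$.

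The main step is to construct a separator living above the threshold $\theta=1/(12a\log(2n))$. I would dyadically partition vertices into $O(\log(2n))$ classes $V_j=\{v:\ell(v)\in[2^{-j},2^{-j+1})\}$ (vertices with $\ell(v)<1/(2n)$ contribute negligibly, because an induced $A$--$B$ path has $\sum_{v\in P}\ell(v)\geq 1$ and $|P|\leq n$). For each induced $A$--$B$ path $P$, pigeonhole guarantees some $V_j$ carries $\ell$-mass at least $1/(2\log(2n))$ on $P$. For each scale $j$, I would run ball-growing restricted to the rescaled weight $\widetilde\ell_j(v)=2\log(2n)\cdot\ell(v)\cdot\mathbf{1}[v\in V_j]$, producing a shell $S_j\subseteq V_j$ that cuts every induced $A$--$B$ path whose dominant scale is $j$; then $S=\bigcup_j S_j$ separates $A$ from $B$. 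For each $j$, the key identity bounds $\sum_{v\in S_j}y_v\ell(v)\leq\sum_F x_F$ for any feasible $y$, and the $|V(P)\cap F|\leq 2a$ bound lets one sharpen this to $\mathrm{fcov}_{\mathcal{F}}(S_j)\leq 2a\cdot\sum_F x_F$ at each scale; summing over the $O(\log(2n))$ scales yields the advertised $12a\log(2n)\sum_F x_F$ bound. The main obstacle is precisely this last accounting: one must couple the per-scale ball-growing argument with the identity above and the $|V(P)\cap F|\leq 2a$ bound so that each scale's contribution is only $O(a)\sum_F x_F$ rather than the naive $2^j\sum_F x_F$ which would blow up across the dyadic classes.
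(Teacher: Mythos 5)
Your overall architecture matches the paper's: a region-growing/random-threshold argument on the weighted distance $d(v)$, a dyadic decomposition contributing a factor $O(\log 2n)$, and a per-scale factor $O(a)$ coming from $\alpha(F)\leq a$. The preprocessing (discarding containers of weight below $\approx 1/n$) and the observation that $\{x_F/\theta\}$ fractionally covers any set on which $\ell(v)\geq\theta$ are both sound and correspond to Lemma~\ref{lem:x_F_behaves_nicely} and the construction of $\hat y_F^r$ in the paper.

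However, there is a genuine gap at exactly the point you flag as ``the main obstacle,'' and the tool you propose to close it is the wrong one. The bound $|V(P)\cap F|\leq 2a$ for \emph{induced paths} $P$ does not help here: the shell $S_j$ (or $S_\tau\cap V_j$) is not a path, and a priori a single container $F$ could contain many vertices $v\in V_j$ whose intervals $I_v=(d(v)-\ell(v),\,d(v)]$ are pairwise disjoint and spread across all of $[0,1]$, in which case $F$ meets the random shell for a constant measure of thresholds $\tau$ and the per-scale charge to $F$ is $\Theta(2^j x_F)$, not $O(a\,x_F)$ --- precisely the blow-up you want to avoid. What actually controls this, and what the paper uses (Claim~\ref{claim:S_r_has_small_cover}), is a clustering argument in the \emph{threshold} coordinate: if $u,v\in F$ are adjacent then $I_u\cap I_v\neq\emptyset$ (the Lipschitz property $d(v)\leq d(u)+\ell(v)$), so taking a maximal independent set $\mathcal{I}$ of the low-weight vertices $F_i=\{v\in F: \ell(v)\leq 2^{-i}\}$ --- which has $|\mathcal{I}|\leq\alpha(F)\leq a$ --- every $v\in F_i$ has its interval $I_v$ contained in a $2^{-i}$-enlargement of $I_u$ for some $u\in\mathcal{I}$. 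Hence $\bigl|\bigcup_{v\in F_i}I_v\bigr|\leq 3a\cdot 2^{-i}$, which is the statement that makes each scale's expected contribution $O(a)\sum_F x_F$. This is where $\alpha(F)\leq a$ enters the proof of this theorem; the $|V(P)\cap F|\leq 2a$ fact is used elsewhere in the paper (in the sampling arguments and the clique bounds on $G'$), not here. As written, your proposal asserts the per-scale bound without a proof and with an inapplicable justification, so the proof is incomplete at its central step.
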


\begin{proof}
    We define the set of \emph{heavy} vertices to be $ \mathbf{V} := \bigl\{ v \in V(G) \,\mid\, \sum_{\substack{F \in \mathcal{F} \\ F \ni v}} x_F \ge \frac{1}{2n} \bigr\}$. For any walk $Q$ in $G$, we use $\mathbf{Q}$ to denote $V(Q) \cap \mathbf{V}$.
    For every vertex $v \in V(G)$, define
    $$
    x_v :=
    \begin{cases}
    \min\!\left\{ 1,\; \sum_{\substack{F \in \mathcal{F} \\ F \ni v}} 2x_F \right\} & \text{if } v \in \mathbf{V},\\[4pt]
    0 & \text{otherwise}.
    \end{cases}
    $$
    By definition, $x_v \ge \frac{1}{n}$ whenever $x_v > 0$.
    For each $v \in V(G)$, let
    $$
    d_v := \min \left\{ 1, \min\left\{ \sum_{\substack{F \in \mathcal{F} \\ F \cap \mathbf{P} \neq \emptyset}} 2x_F \;\middle|\; P \text{ is an } A\text{--}v \text{ path} \right\} \right\},
    \qquad
    I_v := \left[ d_v - x_v,\; d_v \right].
    $$
    Observe that for every $A$--$v$ walk $Q$, $d_v \le \sum_{\substack{F \in \mathcal{F} \\ F \cap \mathbf{Q} \neq \emptyset}} 2x_F$, since $G[Q]$ contains an $A$--$v$ path. We begin with the following claim.
    \begin{claim}\label{claim:intervals_overlap}
        Let $u,v \in V(G)$ be two vertices such that there exists a $u$--$v$ path $P$ in $G$ that contains no heavy vertices except possibly $u$ or $v$. Then $d_v \le d_u + x_v$.
    \end{claim}
    \begin{proof}
        Let $Q$ be the $A$--$v$ walk obtained by appending $P$ to an $A$--$u$ path realizing $d_u$. Then,
        \begin{align*}
        d_v
        &\ =\ \min \left\{ 1,\, \min\left\{
            \sum_{\substack{F \in \mathcal{F} \\ F \cap \mathbf{P'} \neq \emptyset}} 2x_F
            \;\middle|\;
            P' \text{ is an } A\text{--}v \text{ path}
           \right\} \right\} \\
        &\ \leq\ \min \left\{1,\,
            \sum_{\substack{F \in \mathcal{F} \\ F \cap \mathbf{Q} \neq \emptyset}} 2x_F
            \right\}\\
        &\ \leq\ \min \left\{1,\, d_u +
            \begin{cases}
                \displaystyle
                \sum_{\substack{F \in \mathcal{F} \\ F \ni v}} 2x_F & \text{if } v \in \mathbf{V}, \\[6pt]
                0 & \text{otherwise}
            \end{cases}
            \right\}
         \ =\ \min\{1,\, d_u + x_v\}
         \ \leq\ d_u + x_v.
    \end{align*}

    Here the first inequality follows from the fact that $Q$ is an $A$--$v$ walk and the equality in the last line follows from the fact that $\min\{1,\min\{1,a\}+b\} = \min\{1, a+b\}$ for all non-negative real numbers $a,b$.
    \end{proof}
    
    Let $uv\in E(G)$, applying Claim~\ref{claim:intervals_overlap} to the pair $(u,v)$ and $(v,u)$ implies that $I_u \cap I_v \neq \emptyset$.
    For every $r \in [0,1]$, we define $S_r = \{ v \in \mathbf{V} \mid r \in I_v \}$ and make the following claims. 
    \begin{claim}\label{claim:S_r_is_A-B_sep}
        $S_r$ forms an $A$--$B$ separator for every $r\in [0,1]$
    \end{claim}
    
    \begin{proof}
        Note that if $v\in A$, then by definition $d_v = x_v$. We also show that if $v\in B$, then $d_v \geq 1$. To see this, let $P$ be an $A$--$v$ path in $G$ and observe that,
        $$
        \sum_{\substack{F \in \mathcal{F}\\ F \cap \mathbf{P} \neq \emptyset}} 2x_F \quad
        \geq \quad 2\bigl( 1 - \sum_{\substack{ F \in \mathcal{F} \\ F \cap P \neq \emptyset \\ F \cap \mathbf{P} = \emptyset}} x_F \bigr)\quad
        \geq \quad 2\bigl( 1 - \sum_{v \in P\setminus \mathbf{P}}\sum_{F \ni v} x_F \bigr) \quad
        \geq \quad  1.
        $$
        Here, the first transition uses that $\{x_F\}_{F\in\mathcal{F}}$ is a fractional $(A,B)$-separator in $G$, and therefore $\sum_{\substack{ F \in \mathcal{F} \\ F \cap P \neq \emptyset}}x_F \geq 1$. The last transition uses that $V(P)\setminus \mathbf{P}$ has at most $n$ vertices and that $\sum_{F \ni v} x_F$ is at most $\frac{1}{2n}$ for every $v \in V(P)\setminus \mathbf{P}$. 
        
        Let $r \in [0,1]$ and let $P = (v_1, v_2, \dots, v_p)$ be an $A$--$B$ path in $G$. For vertices $v_i,v_j \in V(P)$, we use $P_{v_i,v_j}$ to denote the subpath of $P$ from $v_i$ to $v_j$. Also, we use $\{v'_1, \dots, v'_\ell\}$ to denote the vertices of $\mathbf{P}$, listed in order of their appearance on $P$.
        Let $v'_i$ be the first vertex in $\mathbf{P}$ such that $d_{v'_i} \ge r$. We now argue that such a vertex exists. As $v_p \in B$ implies $d_{v_p} \ge 1$, if $v_p\in \mathbf{P}$ then we are done. Otherwise, as $P_{v'_\ell, v_p}$ is a $v'_\ell$--$v_p$ path containing no heavy vertices other than $v'_\ell$, Claim~\ref{claim:intervals_overlap} applied to $(v'_\ell, v_p)$ gives $d_{v'_\ell} \ge d_{v_p} \ge 1 \ge r$. Therefore $v'_i$ is well defined.
        Let us consider the case when $i=1$. Since $v_1 \in A$ and $P_{v_1, v'_1}$ is a $v_1$--$v'_1$ path containing no heavy vertices other than $v'_1$, Claim~\ref{claim:intervals_overlap} applied to $(v_1, v'_1)$ gives $[0,r] \subseteq I_{v'_1}$, and therefore $v'_i = v'_1 \in S_r$. 
        Otherwise, consider $v'_{i-1}$. Since $d_{v'_{i-1}} < r$ and $P_{v'_{i-1}, v'_i}$ contains no heavy vertices other than $v'_{i-1}$ and $v'_i$, Claim~\ref{claim:intervals_overlap} applied to $(v'_{i-1}, v'_i)$ gives $d_{v'_i} - x_{v'_i} \le d_{v'_{i-1}} < r$, which implies $r \in I_{v'_i}$, and hence $v'_i \in S_r$.
        Therefore $S_r$ is an $A$--$B$ separator.
    \end{proof}

    Consider the random process where an $r\in [0,1]$ is sampled uniformly at random.
    
    \begin{claim}\label{claim:S_r_has_small_cover}
        $\mathbb{E}_r(\mathrm{fcov}_{\mathcal{F}}(S_r))\ \leq\ 12 \cdot \log 2n \cdot a \cdot \sum_{F \in \mathcal{F}} x_F$
    \end{claim}
    
    \begin{proof}
        For each $F\in \mathcal{F}$ and $r\in [0,1]$ we define 
        $$
        m^r_F \ := \
        \begin{cases}
            \quad 0 & \text{if } F \cap S_r = \emptyset,\\
            \quad \max\Bigl\{ \frac{1}{x_v} \, |\, v \in F \cap S_r \Bigr\} & \text{otherwise}.
        \end{cases}
        $$    
        Since $S_r \subseteq \mathbf{V}$, we have $0 < \frac{1}{n} \leq x_v \leq 1$ for every $v \in F \cap S_r$, and thus $m^r_F$ is well defined and satisfies $1 \leq m^r_F \leq n$ whenever it is non-zero.
        Let $\hat{y}_F^r = 2x_F \cdot m^r_F$. Note that if $u \in S_r$, then
        $$
        \sum_{\substack{F \in \mathcal{F}\\ F \ni u}} \hat{y}_F^r \quad  
        \geq \quad \sum_{\substack{F \in \mathcal{F}\\ F \ni u}} 2x_F \cdot \max\left\{ \frac{1}{x_v} \,\middle|\, v \in F \cap S_r \right\} \quad 
        \geq \quad \frac{1}{x_u} \cdot \sum_{\substack{F \in \mathcal{F}\\ F \ni u}} 2x_F \quad 
        \geq \quad 1.
        $$
        Hence, for every $r \in [0, 1]$ we have that $\{\hat{y}_F^r\}_{F \in \mathcal{F}}$ forms a fractional cover of $S_r$. Therefore, 
        \begin{align}\label{eq:expectaion_A-B_separator}
            \nonumber \mathbb{E}_r[\mathrm{fcov}_{\mathcal{F}}(S_r)] \quad
            &\leq \quad \mathbb{E}_r\left[\sum_{F \in \mathcal{F}} \hat{y}_F^r\right]\quad = \quad \sum_{F \in \mathcal{F}} 2x_F \cdot \mathbb{E}_r[m_F^r]\\
            \nonumber &\leq \quad \sum_{F \in \mathcal{F}} 2x_F \left[ \sum_{i = 0}^{\infty} 2^{i+1} \cdot \mathbb{P}_r[2^i \leq m_F^r < 2^{i+1}] \right]\\
            &\leq \quad \sum_{F \in \mathcal{F}} 2x_F \left[ \sum_{i = 0}^{\lfloor \log n \rfloor} 2^{i+1} \cdot \mathbb{P}_r[2^i \leq m_F^r] \right].
        \end{align} 
        Note that the last and penultimate inequalities hold because whenever $m_F^r$ is non-zero, its value is at least $1$ and at most $n$ for every $F \in \mathcal{F}$ and $r \in [0,1]$, as observed earlier.
        
        Consider $F \in \mathcal{F}$. For each $0 \leq i \leq \lfloor \log n \rfloor$, let $F_i = \{v\in F\ |\ x_v\leq 2^{-i}\}$ and define $\Hat{I}_v^i = [d_v-x_v-2^{-i},\ d_v+2^{-i}]\cap [0,1]$ for every $v \in F_i$. Let $J$ be a maximal independent set of $F_i$. Since $J$ is also an independent set in $F$, we have $|J| \leq a$. Moreover, because $J$ is maximal, for every $v \in F_i\setminus J$ there exists some $u \in J$ such that $uv\in E(G)$ and hence $I_v \cap I_u \neq \emptyset$. Since $x_v \leq 2^{-i}$, it follows that $I_v \subseteq \Hat{I}_u^i$. Now, we have:
        $$
        \mathbb{P}_r[2^i \leq m_F^r] \quad
        = \quad \mathbb{P}_r\left[\exists v \in F_i\ \middle|\ r \in I_v \right] \quad
        \leq \quad \mathbb{P}_r\left[\exists u \in J\ \middle|\ r \in \Hat{I}_u^i \right] \quad
        \leq \quad \sum_{u \in J} \lVert \Hat{I}_u^i \rVert \quad
        \leq \quad  3 \cdot 2^{-i} \cdot a,
        $$
        where for an interval $I = [z_1, z_2] \subseteq [0,1]$, $\lVert I \rVert$ denotes $z_2 - z_1$. 
        Combining this with Equation~\ref{eq:expectaion_A-B_separator}, we conclude that 
        $$
        \mathbb{E}_r[\mathrm{fcov}_{\mathcal{F}}(S_r)] \quad
        \leq \quad \sum_{F \in \mathcal{F}} 2x_F \left[ \sum_{i = 0}^{\lfloor \log n \rfloor} 2^{i+1} \cdot 3 \cdot 2^{-i} \cdot a \right] \quad
        \leq \quad 12 \cdot \log 2n  \cdot a \cdot \sum_{F \in \mathcal{F}} x_F.
        $$
    \end{proof} 
    
    Now, as $\mathbb{E}_r[\mathrm{fcov}_{\mathcal{F}}(S_r)] \leq 12 \cdot \log 2n \cdot a \cdot \sum_{F \in \mathcal{F}} x_F$ by Claim~\ref{claim:S_r_has_small_cover}, there must exist some $r_o\in[0,1]$ such that the fractional cover of $S_{r_o}$ using sets from $\mathcal{F}$ must have value at most $12 \cdot \log 2n \cdot a \cdot \sum_{F \in \mathcal{F}} x_F$. Furthermore, $S_{r_o}$ is an $A$--$B$ separator by Claim~\ref{claim:S_r_is_A-B_sep}, which concludes our proof.
\end{proof}

\section{Path Packing via the Dual $A$--$B$ Separator LP}\label{sec:ab_sampling}
    The main theorem that we prove in this section is the following:

\begin{theorem}
\label{thm:A-B_path_packing}
Let $G$ be a graph, $A, B \subseteq V(G)$ be vertex subsets, $\mathcal{F}$ be a family of vertex subsets, and $f$ be the minimum of $\sum_{F \in {\cal F}} x_F$ over all fractional $(A, B)$-separators $\{x_F\}_{F\in{\cal F}}$. Then, for every $\ell \geq \log 2|{\cal F}|$ there exists a multiset $\mathcal{Q}$ of induced $A$--$B$ paths in $G$ of cardinality at least $f \cdot \ell$, such that for every $F \in \mathcal{F}$, the number of paths in $\mathcal{Q}$ that have a non-empty intersection with $F$ is at most $6(\ell+1)$.
\end{theorem}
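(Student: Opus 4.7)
The plan is to view the minimum fractional $(A,B)$-separator problem as a linear program, dualize it to get a fractional packing of induced $A$--$B$ paths, and then sample from the resulting path distribution, using a Chernoff + union bound to show that some realization gives a multiset with the desired pointwise intersection property. Notably, no hypothesis on the independence number of sets in $\mathcal F$ is used here; it only enters when this theorem is later combined with Theorem~\ref{thm:A-B_separator}.

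\textbf{LP duality.} Write the primal as
\[ \min \sum_{F \in \mathcal F} x_F \quad \text{s.t.} \quad \sum_{F : F \cap P \neq \emptyset} x_F \geq 1 \text{ for every induced } A\text{--}B \text{ path } P,\; x_F \geq 0. \]
Restricting the constraints to \emph{induced} paths does not change the optimum, since any non-induced $A$--$B$ path $P$ contains, on a subset of $V(P)$ obtained by chord-shortening, an induced $A$--$B$ path $P'$ whose constraint implies that of $P$ (because $F \cap P' \neq \emptyset$ forces $F \cap P \neq \emptyset$). Hence the primal value is $f$. Its dual is
\[ \max \sum_P y_P \quad \text{s.t.} \quad \sum_{P : F \cap P \neq \emptyset} y_P \leq 1 \text{ for every } F \in \mathcal F,\; y_P \geq 0, \]
with $P$ ranging over induced $A$--$B$ paths. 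Strong LP duality (assuming $0 < f < \infty$; the cases $f = 0$ and the degenerate unbounded case are trivial) yields an optimal dual solution $\{y_P\}$ with $\sum_P y_P = f$. Normalising, $\mathbb P_{\mathcal D}[P] := y_P/f$ defines a probability distribution $\mathcal D$ on induced $A$--$B$ paths, and dual feasibility becomes the marginal bound $\mathbb P_{P \sim \mathcal D}[P \cap F \neq \emptyset] \leq 1/f$ for every $F \in \mathcal F$.

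\textbf{Sampling and concentration.} Draw $N := \lceil f \ell \rceil$ paths $P_1, \dots, P_N$ independently from $\mathcal D$ and let $\mathcal Q := \{P_1, \dots, P_N\}$ (as a multiset). Then $|\mathcal Q| \geq f\ell$ by construction, and for each $F \in \mathcal F$ the count $X_F := |\{i : P_i \cap F \neq \emptyset\}|$ is a sum of $N$ independent Bernoullis with $\mathbb E[X_F] \leq N/f \leq \ell$ (modulo the rounding adjustment discussed below). Proposition~\ref{thm:chernoff} with $R = 6\ell \geq 6\,\mathbb E[X_F]$ gives $\mathbb P[X_F \geq 6\ell] \leq 2^{-6\ell}$, and a union bound over $\mathcal F$ yields
\[ \mathbb P\bigl[\exists F \in \mathcal F : X_F \geq 6\ell\bigr] \;\leq\; |\mathcal F| \cdot 2^{-6\ell} \;\leq\; |\mathcal F| \cdot (2|\mathcal F|)^{-6} \;<\; 1, \]
where the penultimate inequality uses the hypothesis $\ell \geq \log(2|\mathcal F|)$. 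Hence some realisation of $(P_1, \dots, P_N)$ satisfies $X_F < 6\ell$ simultaneously for all $F \in \mathcal F$, and this multiset is the required $\mathcal Q$.

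\textbf{Expected main obstacle.} There is no conceptual obstacle, only a minor arithmetic one: making $|\mathcal Q| \geq f\ell$ and $\mathbb E[X_F] \leq \ell$ hold simultaneously is tight when $f\ell$ is not an integer, since $N = \lceil f\ell \rceil$ gives $\mathbb E[X_F] \leq \ell + 1/f$ rather than exactly $\ell$. This slack can be absorbed into the large margin $(2|\mathcal F|)^{-5}$ in the union bound — e.g.\ by replacing $\ell$ by $\ell + 1$ inside the Chernoff step and checking $|\mathcal F| \cdot 2^{-6(\ell+1)} < 1$ still holds under $\ell \geq \log(2|\mathcal F|)$. The interesting content of the proof is thus the LP duality + sampling pattern itself, pushed through verbatim from the Korchemna et al.~\cite{KorchemnaL0S024} blueprint.
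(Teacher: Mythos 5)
Your proposal is correct and follows essentially the same route as the paper: dualize the induced-path separator LP, interpret the optimal dual as a distribution over induced $A$--$B$ paths with marginal intersection probability at most $1/f$ per $F$, sample $\lceil f\ell\rceil$ paths, and apply Chernoff plus a union bound over $\mathcal{F}$. Your explicit handling of the $\lceil f\ell\rceil$ rounding slack is a small point the paper elides (it uses the convention $[x]=[\lceil x\rceil]$ and states $\mathbb{E}[\chi_F]\le\ell$ without comment), but otherwise the two arguments coincide.
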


\begin{proof}
    Assume that $f$ is non-zero, since otherwise the theorem holds trivially.
    Let $\mathcal{P}$ denote the set of all induced $A$--$B$ paths in $G$. We describe the \emph{$A$--$B$ separator linear program}, using non-negative real variables $\{x_F\}_{F\in\mathcal{F}}$.
    \begin{align}\label{lp:A-B_separator}
        \text{Minimize :}&\quad \sum\limits_{F\in\mathcal{F}} x_{F}\\
        \nonumber \text{subject to :}&\quad \sum\limits_{\substack{F\in \mathcal{F}\\ F\cap P\neq \emptyset}} x_{F} \geq 1 && \forall P\in \mathcal{P}
    \end{align}
    
    Observe that $f$ equals the optimal value of the above linear program and is at least 1. This is because, if the constraint $\sum\limits_{\substack{F\in \mathcal{F}\\ F\cap P\neq \emptyset}} x_{F} \geq 1$ holds for every induced $A$--$B$ path $P$, then it also holds for every $A$--$B$ path, since for any such path $P$ in $G$, there exists an induced $A$--$B$ path contained within $G[P]$. By strong duality~\cite{doi:10.1137/1025101}, the dual of LP~(\ref{lp:A-B_separator}) also has the same optimum value. We describe the dual using non-negative real variables $\{y_P\}_{P\in\mathcal{P}}$. 
    \begin{align}\label{lp:path_packing}
        \text{Maximize :}&\quad \sum\limits_{P\in\mathcal{P}} y_{P}\\
        \nonumber \text{subject to :}&\quad \sum\limits_{\substack{P\in \mathcal{P}\\ P\cap F\neq \emptyset}} y_{P} \leq 1 && \forall F\in \mathcal{F}
    \end{align}
    
    Let $\{y_P\}_{P\in\mathcal{P}}$ be the values assigned to the corresponding variables in an arbitrary but fixed optimum solution of the dual. Observe that $\mathcal{D} := \{\,y_P/f\,\}_{P\in\mathcal{P}}$ is a probability distribution over $\mathcal{P}$: each $y_P$ is non-negative and $\sum_{P\in\mathcal{P}}y_P = f$. Let $\mathcal{Q} := \{Q_i\}_{i\in[\,f \cdot \ell\,]}$ be $\lceil f \cdot \ell \rceil$ independent samples drawn from the distribution $\mathcal{D}$ over the set $\mathcal{P}$. Let $F \in \mathcal{F}$, and let $\chi_F$ denote the number of paths in $\mathcal{Q}$ that intersect $F$. For every $i\in[\,f \cdot \ell\,]$, we have that,
    $$
    \displaystyle \mathop{\mathbb{P}}\limits_{\substack{Q_i\sim\mathcal{D}}}\left[\,Q_i \cap F \neq \emptyset\,\right] \quad
    = \quad \sum_{\substack{P\in\mathcal{P}\\ P\cap F\neq \emptyset}}
    \displaystyle \mathop{\mathbb{P}}\limits_{\substack{Q_i\sim\mathcal{D}}}\left[\, Q_i = P\, \right] \quad
    = \quad \sum_{\substack{P\in\mathcal{P}\\ P\cap F\neq \emptyset}} \frac{y_{P}}{f} \quad
    \leq \quad \frac{1}{f}.
    $$
    The bound $f \geq 1$, together with linearity of expectation, implies that the expected value of $\chi_F$ is at most $\ell + 1$. Let $\chi_{\mathcal{F}} := \max\{\chi_F\, |\, F\in\mathcal{F}\}$. Applying union bound over all $F\in\mathcal{F}$ and the Chernoff bound from Proposition \ref{thm:chernoff} to $\chi_F$ for every $F\in\mathcal{F}$, we get :
    $$
    \mathbb{P}\left[\,\chi_{\mathcal{F}}\; <\; 6(\ell+1)\,\right] \quad
    \geq \quad 1\ -\ \sum_{F\in\mathcal{F}}\mathbb{P}\left[\,\chi_F\; \geq\; 6(\ell+1)\,\right] \quad
    \geq \quad 1\ -\ |\mathcal{F}| \cdot 2^{-6(\ell+1)} \quad
    \geq \quad \frac{3}{4}.
    $$
    Hence the sampled multiset has cardinality at least $f \cdot \ell$, consists of induced $A$--$B$ paths in $G$, and with probability at least $\frac{3}{4}$, satisfies that for every $F \in \mathcal{F}$, at most $6(\ell+1)$ of these paths intersect $F$. Therefore, it follows that there  exists some $\mathcal{Q}$ with these properties, which concludes the proof of the theorem. 
\end{proof}

\section{Balanced Separators with Small Independence Number}\label{sec:balanced_separator}

We begin this section by defining the \emph{balanced separator linear program}. Let $G$ be a graph, $I \subseteq V(G)$ an independent set, and $\mathcal{F}$ a family of vertex subsets of $G$. For each pair $u, v \in I$, let $\mathcal{P}_{u,v}$ denote the set of all induced paths from $u$ to $v$ in $G$. We describe the \emph{balanced separator linear program} corresponding to the instance $(G, I, \mathcal{F})$, using non-negative real variables $x_F$ and $d_{u,v}$, defined for every $F \in \mathcal{F}$ and $u, v \in I$.

\begin{align}\label{lp:container_cover}
    \text{Minimize:} \quad & \sum_{F \in \mathcal{F}} x_F \\
    \nonumber
    \text{subject to:} \quad 
    & \sum_{v \in I} d_{u,v} \geq \frac{|I|}{10} && \forall u \in I \\
    \nonumber
    & d_{u,v} \leq \sum_{\substack{F \in \mathcal{F} \\ F \cap P \neq \emptyset}} x_F && \forall u,v \in I,\ P \in \mathcal{P}_{u,v} \\
    \nonumber
    & d_{u,v} \leq 1 && \forall u,v \in I
\end{align}

\begin{observation}\label{obs:lp_is_feasible}
    For every graph $G$, independent set $I \subseteq V(G)$, positive integer $a\geq 2$ and family of vertex subsets $\mathcal{F}$ satisfying (1) $\alpha(F) \leq a$ for every $F\in\mathcal{F}$, (2) $V(G)\subseteq \bigcup_{F\in\mathcal{F}}F$, the balanced separator linear program described in LP~(\ref{lp:container_cover}) is feasible and has optimum objective function value at most $|\mathcal{F}|$.
\end{observation}

The above observation follows from the fact that the assignment $x_F = 1$ and $d_{u,v} = 1$ for every $F\in \mathcal{F}$ and $u,v\in I$ is valid and bounded. Indeed, the second constraint is satisfied since every induced path from $u$ to $v$ in $G$, for each $u,v\in I$, intersects at least one $F\in \mathcal{F}$ by property~(2). Furthermore, the first and third constraints are satisfied by definition, since $d_{u,v} = 1$ and $\sum_{v \in I} d_{u,v} = |I|$ for every $u\in I$.
We are now ready to state the main theorem proved in this section:

\begin{restatable}{theorem}{LPoptVSintegral}
\label{thm:balanced_separator_rounding}
For every graph $G$, independent set $I \subseteq V(G)$, positive integer $a\geq 2$ and family of vertex subsets $\mathcal{F}$ satisfying (1) $\alpha(F) \leq a$ for every $F\in\mathcal{F}$, (2) $V(G)\subseteq \bigcup_{F\in\mathcal{F}}F$, there exists an $(I, \frac{95}{100})$-balanced separator $S$ in $G$ such that
$$
\mathrm{fcov}_{\mathcal{F}}(S) \leq 17000 \cdot \log 2n \cdot a^2 \cdot \log(a \cdot \lpopt + 4) \cdot \lpopt,
$$
whenever $\lpopt$, which denotes the optimal value of the balanced separator linear program corresponding to $(G, I, \mathcal{F})$, is positive.
\end{restatable}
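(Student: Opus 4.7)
My plan is to round the optimal LP solution $(x^*, d^*)$ of value $f := \lpopt$ into a combinatorial $(I, 95/100)$-balanced separator by iteratively invoking Theorem~\ref{thm:A-B_separator}, adapting the rounding template of Korchemna et al.~\cite{KorchemnaL0S024} to the setting where sets in $\mathcal{F}$ are only $\alpha$-bounded, not cliques. The key building block is a ``pivot / far-set'' extraction from the LP: if $C \subseteq V(G)$ is a set with $|I \cap C| > (95/100)|I|$, then for every $u \in I \cap C$ one has $\sum_{v \in I \cap C} d^*_{u,v} \geq |I|/10 - |I \setminus C| \geq |I|/20$, and a Markov-style calculation with a suitable threshold $\tau = 1/\Theta(a)$ produces a ``far set'' $B_u \subseteq I \cap C$ of size $\Omega(|I|)$ on which $d^*_{u,v} \geq \tau$.

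For any such triple $(C, u, B_u)$, the scaled vector $\{x^*_F / \tau\}_{F \in \mathcal{F}}$ is a fractional $(\{u\}, B_u)$-separator in $G[C]$: every induced $u$--$v$ path $P$ with $v \in B_u$ satisfies $\sum_{F \cap P \neq \emptyset} x^*_F \geq d^*_{u,v} \geq \tau$, and every $u$--$v$ walk contains an induced $u$--$v$ sub-path. Its total weight is at most $f/\tau = O(a \cdot f)$, so Theorem~\ref{thm:A-B_separator} yields a combinatorial $(\{u\}, B_u)$-separator $S^{(j)}$ in $G[C]$ with $\mathrm{fcov}_{\mathcal{F}}(S^{(j)}) \leq 12 \log(2n) \cdot a \cdot O(a f) = O(a^2 f \log n)$. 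The algorithm maintains a running separator $S_j$, starting from $S_0 = \emptyset$; at each step $j$ it locates a bad component $C_j$ of $G \setminus S_j$ (one with $|I \cap C_j| > (95/100)|I|$) and sets $S_{j+1} := S_j \cup S^{(j)}$, halting when no bad component remains.

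The hardest step will be bounding the number of iterations by $T = O(\log(a \cdot f))$. A single round separates $u_j$ from $B_{u_j}$ but does not by itself guarantee that the new bad component (if any) is much smaller than $C_j$: if $u_j$ lands in the small side of $S^{(j)}$, the remainder of $I \cap C_j$ can coalesce into a component whose $I$-mass drops by only a single vertex. To circumvent this I would either (i)~symmetrise the extraction, using the LP to pick two antipodal subsets $A_j, B_j \subseteq I \cap C_j$, each of size $\Omega(|I \cap C_j|)$ and with LP-distance at least $\tau$ between them, and apply Theorem~\ref{thm:A-B_separator} to the pair $(A_j, B_j)$ so that every component of $G \setminus S^{(j)}$ loses a constant fraction of $I$ on one side or the other, or (ii)~maintain a potential---such as the minimum fractional $\mathcal{F}$-cover of an $(\{u\}, B_u)$-separator inside the bad component, or the LP weight surviving inside $C_j$---and show it decays by a constant factor per iteration. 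Either variant caps $T$ at $O(\log(a f))$, and combining with the per-round cost $O(a^2 f \log n)$ yields the claimed bound $O(a^2 f \log n \cdot \log(a f))$. The absolute constant $17000$ then absorbs the factor $12$ from Theorem~\ref{thm:A-B_separator}, the constants from the Markov step, and the overhead of the iteration analysis.
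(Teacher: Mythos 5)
Your high-level template (iterate: find a bad component, extract a fractional separator from the LP, round it with Theorem~\ref{thm:A-B_separator}, repeat) matches the paper's, but the step you flag as hardest --- controlling the total cost of the iteration --- is exactly where the proposal breaks, and neither of your two proposed fixes works as stated. Your option~(i) asks for two subsets of $I\cap C_j$, each of size $\Omega(|I\cap C_j|)$, at pairwise LP-distance at least $\tau=1/\Theta(a)$. This is not available in general: if the LP metric restricted to $I$ behaves like the (normalized) shortest-path metric of a constant-degree expander, the average distance is $\Theta(1)$ (so the LP constraint $\sum_v d_{u,v}\ge |I|/10$ can hold) while any two linear-size subsets of $I$ are within normalized distance $O(1/\log|I|)$ of each other, which is far below $1/\Theta(a)$ when $a$ is small. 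Your option~(ii) posits a potential that decays by a constant factor per round, but neither candidate you name does so: the LP weight ``surviving'' in the bad component can shrink by an arbitrarily small amount in one round, so the iteration count is not $O(\log(af))$ --- it can be as large as $n$.

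The paper's proof resolves this not by bounding the number of iterations but by an amortized charging argument over the measure $\mu(X)=\sum_{F\in\mathcal{F}}\alpha(F\cap X)\,x_F$, which satisfies $\mu(G)\le a\cdot \lpopt$ and is additive over anticomplete parts. Crucially, the fractional separator used in each round is not the globally scaled vector $\{x_F/\tau\}$ (of weight $\Theta(a\cdot\lpopt)$ every round, which is what forces you to count rounds); instead, a ball-growing step (Lemma~\ref{lem:good_layer_cost_and_gain}) finds, among $\ell_{\max}=O(\log(a\cdot\lpopt))$ annuli of width $\epsilon=1/\Theta(\log(a\cdot\lpopt))$ around a pivot $\bar u$, one shell $\delta_C(\bar u,r_\ell)$ with $\mu(\delta_C(\bar u,r_\ell))\le\mu(B_C(\bar u,r_\ell))$; only the $x_F$ with $F$ meeting that shell are scaled by $1/\epsilon$ (Lemma~\ref{lem:good_layer_frac_separator}). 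The round's cost is therefore proportional to $\mu$ of a region that is anticomplete to, and removed from, all future bad components, so the costs of all rounds telescope to $O(\log 2n\cdot a\cdot\log(a\cdot\lpopt))\cdot\mu(G)$ plus the one-time cost $\lpopt/\epsilon$ of deleting the heavy vertices $Z_0=\{v: x_v\ge\epsilon\}$. Note also that the $\log(a\cdot\lpopt)$ factor in the theorem comes from the width $\epsilon$ of the shells (i.e., from the number of possible doublings of $\mu$ of the growing ball), not from an iteration count. To repair your write-up you would need to replace the global scaling and round-counting by this shell-scaling and disjoint-charging scheme, and to separate $N[B_C(\bar u,r_\ell)]\cap C$ (not just the single pivot $u$) from the far set so that the charged ball genuinely leaves the process.
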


For the remainder of this section, we fix a graph $G$, an independent set $I \subseteq V(G)$, a positive integer $a\geq 2$, and a family of vertex subsets $\mathcal{F}$ satisfying properties (1) and (2) described in the theorem. We also fix an optimal solution to the balanced separator linear program corresponding to $(G, I, \mathcal{F})$, which exists and is bounded via Observation~\ref{obs:lp_is_feasible}. Let $x_F$ and $d_{u,v}$, for every $F \in \mathcal{F}$ and $u,v\in I$, denote the values assigned to the corresponding variables in this solution. Let $\lpopt$ denote the objective function value of this solution, and assume $\lpopt > 0$.

For every $v \in V(G)$, define $x_v := \sum_{\substack{F \in \mathcal{F} \\ F \ni v}} x_F$. Also, for every $u, v \in V(G)$, we define $d(u,v) := \min\{ \sum_{\substack{F \in \mathcal{F} \\ F \cap P \neq \emptyset}} x_F \,|\, P \in \mathcal{P}_{u,v} \}$. Now, we make some observations about the function $d$.

\begin{observation}\label{obs:distance_defintions_match}
    If $u, v \in I$, then $\min\{d(u,v), 1\} \geq d_{u,v}$.
\end{observation}

\begin{observation}\label{obs:triangle_inequality}
    If $u,v,w \in V(G)$, then $d(u,w) \leq d(u,v) + d(v,w)$. Furthermore, if $vw \in E(G)$ and $u \in V(G)$, then $d(u,w) \leq d(u,v) + x_w$. 
\end{observation}

Observation~\ref{obs:distance_defintions_match} follows immediately from the linear program constraints: \emph{$d_{u,v} \leq \sum_{\substack{F\in \mathcal{F}\\ F\cap P \neq\emptyset}} x_F$ and $d_{u,v} \leq 1$ for every $u,v\in I$ and $P\in\mathcal{P}_{u,v}$}. Furthermore, note that $d(u,v) \leq \sum_{\substack{F \in \mathcal{F} \\ F \cap Q \neq \emptyset}} x_F$ for any $u$--$v$ walk $Q$ in $G$, as we can always find an induced $u$--$v$ path in $G[Q]$. Hence, Observation~\ref{obs:triangle_inequality} follows from the fact that if $P_1$ and $P_2$ are induced $u$--$v$ and $v$--$w$ paths realizing $d(u,v)$ and $d(v,w)$ respectively, then appending $P_2$ to $P_1$ yields a $u$--$w$ walk $Q$ satisfying $\sum_{\substack{F \in \mathcal{F} \\ F \cap Q \neq \emptyset}} x_F \leq d(u,v) + d(v,w)$. Similarly, if $vw \in E(G)$, then appending $w$ to the $u$--$v$ path realizing $d(u,v)$ produces a $u$--$w$ walk satisfying $\sum_{\substack{F \in \mathcal{F} \\ F \cap Q \neq \emptyset}} x_F \leq d(u,v) + x_w$.\\

Let $\epsilon := \frac{1}{1300 \cdot \log(a \cdot \lpopt + 4) }$, 
$\ell_{\max} := 2 \cdot  \lceil \log(a \cdot \lpopt + 4) \rceil + 12$ and define $r_{i} := (4i - 2)\epsilon$, where $i$ is a positive integer. Let $Z_0 := \{v \in V(G) \mid x_v \geq \epsilon\}$. Furthermore, for every vertex $u \in V(G)$, subgraph $C\subseteq G$ and positive real number $r$, define 
$$
B_C(u, r) := \{v \in V(C) \mid d(u, v) \leq r\}
\quad \text{and} \quad
\delta_C(u, r) := B_C(u, r + 4\epsilon) \setminus B_C(u, r + \epsilon).
$$

\begin{lemma}\label{lem:good_layer_many_outside}
    Let $Z \subseteq V(G)$ be a superset of $Z_0$ and let $\ell \in [\,\ell_{\max}\,]$. Let $C$ be a connected component of $G - Z$ with $\Bar{u} \in I \cap V(C)$. Then,
    $$
    \left| I \setminus B_C(\Bar{u}, r_{\ell+1}) \right| \geq \frac{5|I|}{100}.
    $$
\end{lemma}
\begin{proof}
    Since $\ell \in [\,\ell_{\max}\,]$, we have 
    $$
    r_{\ell+1} \
    \leq \ r_{\ell_{\max} + 1} \
    < \ (8\cdot \log(a \cdot \lpopt + 4) + 58)\epsilon \
    \leq \ 37\cdot \log(a \cdot \lpopt + 4)\cdot \epsilon \
    < \ \frac{1}{20}.
    $$
    Here, the first inequality substitutes the definitions and uses the fact that $\lceil x\rceil < x+1$ for every real number $x$, while the second inequality uses the fact that $\log(x + 4) \geq 2$ for every non-negative real number $x$.
    Now assume, for the sake of contradiction, that $\left| I \setminus B_C(\Bar{u}, r_{\ell+1}) \right| < \frac{5|I|}{100}$.
    Then we obtain:
    $$
    \begin{aligned}
    \sum_{v \in I} \min\{d(\Bar{u}, v),1\} \quad
    &= \quad \sum_{v \in I \cap B_C(\Bar{u}, r_{\ell+1})} \min\{d(\Bar{u}, v),1\} 
       \quad + \sum_{v \in I \setminus B_C(\Bar{u}, r_{\ell+1})} \min\{d(\Bar{u}, v),1\} \quad \\
    &\leq \quad\left(|I| - |I\setminus B_C(\Bar{u}, r_{\ell+1})|\right)\cdot r_{\ell+1} + |I\setminus B_C(\Bar{u}, r_{\ell+1})|\cdot 1 \\
    &< \quad \frac{95|I|}{100} \cdot r_{\ell+1} + \frac{5|I|}{100} \cdot 1 \quad \\
    &< \quad \frac{|I|}{10}.
    \end{aligned}
    $$
    Here the second inequality follows from the fact that $r_{\ell+1} < 1$ and consequently maximizing $\left(|I| - |I\setminus B_C(\Bar{u}, r_{\ell+1})|\right)\cdot r_{\ell+1} + |I\setminus B_C(\Bar{u}, r_{\ell+1})|\cdot 1$ is equivalent to maximizing $|I\setminus B_C(\Bar{u}, r_{\ell+1})|$.
    But, Observation~\ref{obs:distance_defintions_match} implies the inequality $\sum_{v \in I} \min\{d(\Bar{u},v),1\} \geq \sum_{v \in I} d_{\Bar{u},v}$ which contradicts the constraint $\sum_{v \in I} d_{\Bar{u}, v} \geq \frac{|I|}{10}$ of LP~(\ref{lp:container_cover}). Thus, it follows that $|I \setminus B_C(\bar{u}, r_{\ell+1})| \geq \frac{5|I|}{100}$.
\end{proof}

\begin{lemma}\label{lem:good_layer_frac_separator}
    Let $Z \subseteq V(G)$ be a superset of $Z_0$ and let $\ell \in [\,\ell_{\max}\,]$. Let $C$ be a connected component of $G - Z$ with $\Bar{u} \in I \cap V(C)$. Let $A' := N[B_C(\Bar{u}, r_\ell)]\cap V(C)$ and $B' := V(C) \setminus B_C(\Bar{u}, r_{\ell+1})$. Then the assignment $\{x'_F\}_{F \in \mathcal{F}}$ defined by
    $$
    x'_F :=
    \begin{cases}
    \frac{x_F}{\epsilon} & \text{if } F \cap \delta_C(\Bar{u}, r_\ell) \neq \emptyset, \\
    0 & \text{otherwise.}
    \end{cases}
    $$
    is a fractional $(A', B')$-separator using $\mathcal{F}$ in $C$.
\end{lemma}
\begin{proof}

    Let $P = (v_1, \dots, v_p)$ be an $A'$--$B'$ path in $C$. Since $v_1\in A' = N[B_C(\Bar{u}, r_\ell)]\cap V(C)$, there exists $v_0\in B_C(\Bar{u}, r_\ell)$ with $v_1\in N[v_0]\cap V(C)$. By applying Observation~\ref{obs:triangle_inequality} to $\{\Bar{u},v_0,v_1\}$, we get $d(\Bar{u}, v_1) \leq d(\Bar{u}, v_0) + x_{v_1} \leq r_\ell + \epsilon$. Furthermore, $d(\Bar{u}, v_p) > r_\ell + 4\epsilon$ since $v_p\in B'$. 
    
    Let $P' := (v'_1, \dots, v'_q)$ be the contiguous subpath of $P$ with the minimum number of vertices such that $v'_1$ satisfies $d(\Bar{u}, v'_1) \leq r_\ell + \epsilon$, and $v'_q$ satisfies $d(\Bar{u}, v'_{q}) > r_\ell + 4\epsilon$. 
    Note that $P'$ is well-defined, since $P$ already satisfies the required properties.
    Moreover, $P'$ has at least 3 vertices; otherwise, applying Observation~\ref{obs:triangle_inequality} to $\{\Bar{u}, v'_1, v'_{q}\}$ yields $d(\Bar{u}, v'_q) \leq d(\Bar{u}, v'_{1}) + x_{v'_{q}} \leq r_\ell + 2\epsilon$, which contradicts the assumption that $d(\Bar{u}, v'_q) > r_\ell + 4\epsilon$. 
    Finally, by definition, we have $V(P')\setminus \{v'_1,v'_q\}\subseteq \delta_C(\bar{u}, r_\ell)$, since the existence of a vertex $v'_i$ in $V(P')\setminus \{v'_1,v'_q\}$ with $d(\bar{u},v'_i) \leq r_\ell+\epsilon$ (respectively $d(\bar{u},v'_i) > r_\ell+4\epsilon$) contradicts the minimality of $P'$ since the subpath of $P'$ from $v'_{i}$ to $v'_q$ (respectively $v'_1$ to $v'_{i}$) is a strictly smaller path that satisfies the requirements. 
    Hence:
    $$
    \sum_{\substack{F \in \mathcal{F}\\ F\cap P\neq \emptyset}} x'_F 
    \ \geq\  \sum_{\substack{F \in \mathcal{F} \\ F \cap \{v'_2, \dots, v'_{q-1}\} \neq \emptyset}} x'_F 
    \ \geq\  \frac{1}{\epsilon}\cdot d(v'_2,v'_{q-1})
    \ \geq\  \frac{1}{\epsilon}\cdot ((d(\Bar{u},v'_{q})-x_{v'_{q}})-(d(\Bar{u},v'_1)+x_{v'_2}))
    \ \geq\  1,
    $$ 
   where the second-to-last inequality follows from applying Observation~\ref{obs:triangle_inequality} to the triples $\{\Bar{u}, v'_2, v'_{q-1}\}$, $\{\Bar{u}, v'_{q-1}, v'_{q}\}$ and $\{\Bar{u}, v'_1, v'_{2}\}$. Thus $\{x'_F\}_{F \in \mathcal{F}}$ is a fractional $(A', B')$-separator using $\mathcal{F}$ in $C$.
\end{proof}

Let $X \subseteq V(G)$. We define $\mu(X) := \sum_{F \in \mathcal{F}} \alpha(F\cap X) \cdot x_F$. Now, we make some observations about the function $\mu$.

\begin{observation}\label{obs:mu_is_monotone}
    If $A$ and $B$ are subsets of $V(G)$ such that $A\subseteq B$, then $\mu(A) \leq \mu(B)$.
\end{observation}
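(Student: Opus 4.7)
The plan is to reduce this to monotonicity of $\alpha$ under taking induced subgraphs, combined with the fact that every $x_F$ is non-negative (since $\{x_F\}_{F \in \mathcal{F}}$ is a feasible point of the balanced separator LP, where the variables are constrained to be non-negative reals).

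First I would observe the obvious set-theoretic inclusion: if $A \subseteq B$, then for every $F \in \mathcal{F}$ we have $F \cap A \subseteq F \cap B$. Next, any independent set in $G[F \cap A]$ is, by heredity, also an independent set in $G[F \cap B]$, so $\alpha(F \cap A) \leq \alpha(F \cap B)$. (Recall the convention from Section~\ref{sec:prelim} that $\alpha(S)$ abbreviates $\alpha(G[S])$.)

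Finally, I would combine these pointwise inequalities across the sum defining $\mu$. Since every $x_F$ is non-negative, multiplying the inequality $\alpha(F \cap A) \leq \alpha(F \cap B)$ by $x_F$ preserves its direction, and summing over $F \in \mathcal{F}$ yields
\[
\mu(A) \;=\; \sum_{F \in \mathcal{F}} \alpha(F \cap A) \cdot x_F \;\leq\; \sum_{F \in \mathcal{F}} \alpha(F \cap B) \cdot x_F \;=\; \mu(B),
\]
as desired. There is no real obstacle here; the only thing to be careful about is citing the non-negativity of $x_F$ from the LP formulation, which justifies the last step.
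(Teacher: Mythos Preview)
Your proof is correct and follows essentially the same approach as the paper, which also deduces the observation from the termwise inequality $\alpha(F\cap A)\leq\alpha(F\cap B)$ for $A\subseteq B$. Your explicit mention of the non-negativity of $x_F$ is a welcome clarification, but otherwise the arguments coincide.
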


\begin{observation}\label{obs:mu_is_additive}
   If $A$ and $B$ are disjoint and anti-complete subsets of $V(G)$, then $\mu(A \cup B) = \mu(A) + \mu(B)$.
\end{observation}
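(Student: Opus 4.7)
The plan is to reduce the claim to the following termwise identity: for every $F \in \mathcal{F}$,
\[ \alpha(F \cap (A \cup B)) = \alpha(F \cap A) + \alpha(F \cap B). \]
Once this is established, I would multiply both sides by $x_F \geq 0$ and sum over $F \in \mathcal{F}$; by the definition of $\mu$ this yields $\mu(A \cup B) = \mu(A) + \mu(B)$ directly.

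To prove the termwise identity, first note that since $A$ and $B$ are disjoint we have $F \cap (A \cup B) = (F \cap A) \sqcup (F \cap B)$, and since $A$ and $B$ are anticomplete in $G$, there are no edges of $G$ between $F \cap A$ and $F \cap B$. For the inequality $\alpha(F \cap (A \cup B)) \geq \alpha(F \cap A) + \alpha(F \cap B)$, I would take maximum independent sets $J_A \subseteq F \cap A$ and $J_B \subseteq F \cap B$ in the respective induced subgraphs; then $J_A \cup J_B$ is independent in $G[F \cap (A \cup B)]$ because each piece is individually independent and no edges cross between them. For the reverse inequality, given any independent set $J$ in $G[F \cap (A \cup B)]$, partition it as $(J \cap A) \sqcup (J \cap B)$: each piece is an independent set in its respective induced subgraph, so $|J| = |J \cap A| + |J \cap B| \leq \alpha(F \cap A) + \alpha(F \cap B)$.

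There is no real obstacle here: the entire content is the standard fact that the independence number is additive under disjoint anticomplete unions, applied to each intersection $F \cap (A \cup B)$ separately and then carried through the nonnegative weighted sum that defines $\mu$. The only point to be careful about is to use the anticomplete hypothesis in the $\geq$ direction (to glue $J_A$ and $J_B$ into an independent set) and the disjointness hypothesis in the $\leq$ direction (to split $J$ without double-counting).
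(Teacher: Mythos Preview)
Your proof is correct and follows exactly the same approach as the paper: the paper states that the observation follows from the termwise identity $\alpha(F\cap(A \cup B)) = \alpha(F\cap A) + \alpha(F\cap B)$ for every $F \in \mathcal{F}$, which is precisely what you establish and then sum. Your write-up simply spells out the (standard) verification of this identity in more detail than the paper does.
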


Observation~\ref{obs:mu_is_monotone} follows from the fact that for any $F \in \mathcal{F}$ and subsets $A \subseteq B \subseteq V(G)$, we have $\alpha(F\cap A) \leq \alpha(F\cap B)$. Similarly, Observation~\ref{obs:mu_is_additive} follows from the fact that if $A$ and $B$ are disjoint and anti-complete subsets of $V(G)$, then for every $F \in \mathcal{F}$ we have $\alpha(F\cap(A \cup B)) = \alpha(F\cap A) + \alpha(F\cap B)$.\\

\begin{lemma}\label{lem:good_layer_cost_and_gain}
    Let $Z \subseteq V(G)$ be a superset of $Z_0$ and let $C$ be the connected component of $G - Z$ such that $|I\cap V(C)| > \frac{95|I|}{100}$ with $\Bar{u} \in I \cap V(C)$. Then there exists $\ell \in [\,\ell_{\max}\,]$ such that $\mu(\delta_C(\Bar{u}, r_\ell)) \leq \mu(B_C(\Bar{u}, r_\ell))$.
\end{lemma}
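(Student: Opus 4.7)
I will proceed by contradiction, supposing that $\mu(\delta_C(\bar u, r_\ell)) > \mu(B_C(\bar u, r_\ell))$ for every $\ell \in [\ell_{\max}]$. The plan is to show that under this hypothesis the values $\mu(B_C(\bar u, r_\ell))$ grow geometrically in $\ell$, eventually exceeding the total mass bound $\mu(V(G)) \leq a \cdot \lpopt$. Since $\ell_{\max} = 2\log(a \cdot \lpopt) + 12$ gives roughly $2^{\ell_{\max}} = 4096 \cdot (a\cdot\lpopt)^2$, this leaves the required slack.

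The heart of the argument is an additive step. By definition, $B_C(\bar u, r_\ell)$ and $\delta_C(\bar u, r_\ell)$ are disjoint. I claim they are also anticomplete in $G[C]$: if $u \in B_C(\bar u, r_\ell)$ and $v \in \delta_C(\bar u, r_\ell)$ were adjacent, then by Observation~\ref{obs:triangle_inequality}, $d(\bar u, v) \leq d(\bar u, u) + x_v \leq r_\ell + x_v$; but $v \in C$ implies $v \notin Z_0$, so $x_v < \epsilon$, giving $d(\bar u, v) < r_\ell + \epsilon$, contradicting $v \in \delta_C(\bar u, r_\ell)$. Moreover, since $r_{\ell+1} = r_\ell + 4\epsilon$, we have $\delta_C(\bar u, r_\ell) \subseteq B_C(\bar u, r_\ell + 3\epsilon) \subseteq B_C(\bar u, r_{\ell+1})$, so $B_C(\bar u, r_{\ell+1})$ contains the disjoint anticomplete union $B_C(\bar u, r_\ell) \cup \delta_C(\bar u, r_\ell)$. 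Observations~\ref{obs:mu_is_monotone} and~\ref{obs:mu_is_additive} then yield
$$\mu(B_C(\bar u, r_{\ell+1})) \;\geq\; \mu(B_C(\bar u, r_\ell)) + \mu(\delta_C(\bar u, r_\ell)) \;>\; 2\,\mu(B_C(\bar u, r_\ell))$$
under the contradiction hypothesis.

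Iterating this doubling across $\ell = 1, \ldots, \ell_{\max}$ gives $\mu(B_C(\bar u, r_{\ell_{\max}+1})) > 2^{\ell_{\max}} \cdot \mu(B_C(\bar u, r_1))$. On the other hand, $\mu(V(G)) = \sum_F \alpha(F) x_F \leq a \cdot \lpopt$, so $\mu(B_C(\bar u, r_1)) < (a\cdot\lpopt)/2^{\ell_{\max}} = 1/(4096 \cdot a \cdot \lpopt)$, which is the inequality I want to contradict against a matching lower bound on $\mu(B_C(\bar u, r_1))$.

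The main obstacle is securing a positive lower bound on the starting value of the doubling chain, since \emph{a priori} we could have $\mu(B_C(\bar u, r_1)) = 0$. I plan to handle this by case splitting: if $\mu(B_C(\bar u, r_1)) = 0$ and $\mu(\delta_C(\bar u, r_1)) = 0$ then the conclusion holds at $\ell = 1$, so we may assume otherwise; if $\mu(B_C(\bar u, r_1)) = 0$ but $\mu(\delta_C(\bar u, r_1)) > 0$, then $\mu(B_C(\bar u, r_2)) > 0$ and I shift the chain to start at $\ell = 2$, absorbing the one-step loss into the constant $+12$ in $\ell_{\max}$. Quantifying the required lower bound of order $(a\cdot\lpopt)^{-1}$ on this starting value is the delicate part: it will require identifying a specific $F \in \mathcal{F}$ with $x_F > 0$ hitting the relevant ball and arguing, via either the LP extreme-point structure or a normalization step analogous to Lemma~\ref{lem:x_F_behaves_nicely}, that $x_F$ is not too small.
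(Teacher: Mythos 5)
Your doubling step is exactly the paper's argument: under the contradiction hypothesis, $B_C(\bar u, r_\ell)$ and $\delta_C(\bar u, r_\ell)$ are disjoint, anticomplete (via Observation~\ref{obs:triangle_inequality}), and both contained in $B_C(\bar u, r_{\ell+1})$, so $\mu$ of the ball more than doubles at each radius. The genuine gap is the base of this chain, which you correctly flag but do not close. Neither of your proposed routes works: the solution $\{x_F\}$ is a \emph{fixed} optimal solution of the balanced separator LP, so it cannot be renormalized in the style of Lemma~\ref{lem:x_F_behaves_nicely} without changing $d$, $Z_0$ and $\lpopt$, and nothing in the LP forces any individual $x_F$ to be bounded below by $(a\cdot\lpopt)^{-1}$. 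Your case split does not rescue this: knowing $\mu(B_C(\bar u, r_2))>0$ gives no contradiction after $\ell_{\max}$ doublings, since that starting value could be arbitrarily small compared to $a\cdot\lpopt \geq \mu(V(G))$.

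The paper obtains the needed quantitative bound $\mu(B_C(\bar u, r_1)) > \epsilon$ precisely from the hypotheses you never use, namely $|C\cap I| > \frac{95|I|}{100}$ and $\bar u \in I\cap C$. By Lemma~\ref{lem:good_layer_many_outside} at least $\frac{5|I|}{100}$ vertices of $I$ lie outside $B_C(\bar u, r_{\ell_{\max}+1})$, while strictly fewer than $\frac{5|I|}{100}$ lie outside $C$; hence some $w\in I\cap C$ satisfies $d(\bar u, w) > r_{\ell_{\max}+1}$. Since $C$ is connected, take a path in $G[C]$ from $\bar u$ to $w$, let $v_{p+1}$ be its first vertex outside $B_C(\bar u, r_1)$, and let $P'$ be the prefix ending at $v_p$, which lies entirely inside $B_C(\bar u, r_1)$. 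Then
$$
\mu(B_C(\bar u, r_1)) \;\geq\; \sum_{\substack{F\in\mathcal{F}\\ F\cap P'\neq\emptyset}} x_F \;\geq\; d(\bar u, v_p) \;\geq\; d(\bar u, v_{p+1}) - x_{v_{p+1}} \;>\; r_1 - \epsilon \;=\; \epsilon,
$$
using Observation~\ref{obs:triangle_inequality} and the fact that $v_{p+1}\in C$ implies $v_{p+1}\notin Z_0$, so $x_{v_{p+1}}<\epsilon$. With this base value $\epsilon = \frac{1}{1300\log(a\cdot\lpopt)}$, the $2^{\ell_{\max}-1}$-fold growth exceeds $a\cdot\lpopt \geq \mu(V(G))$, completing the contradiction. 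Without an argument of this kind your proof is incomplete.
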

\begin{proof}
    For this, we need the following claims:
    
    \begin{claim}\label{claim:inner_ball_has_non_zero_measure}
        $\mu(B_C(\Bar{u}, r_1)) \geq \epsilon$.
    \end{claim}
    \begin{proof}
        Consider a path $P$ in $C$ from $\Bar{u}$ to a vertex $w$ such that $w \in I\cap (V(C)\setminus B_C(\Bar{u}, r_{\ell_{\max} + 1}))$. Note that such a vertex exists by Lemma~\ref{lem:good_layer_many_outside}. Let $P' := (\Bar{u},v_1,\dots,v_p)$ be the subpath of $P$ such that the successor of $v_p$ (say $v_{p+1}$) is the first vertex in $P$ that does not belong to $B_C(\Bar{u}, r_1)$. $P'$ is well defined, since $w$ does not belong to $B_C(\Bar{u}, r_1)$ and is non-empty since $\Bar{u} \in B_C(\Bar{u}, r_1)$. Hence, by Observation~\ref{obs:triangle_inequality}, we get:
        $$
        \mu(B_C(\Bar{u}, r_1)) 
        \ \ \geq \ \ \sum_{\substack{F \in \mathcal{F} \\ F \cap B_C(\Bar{u}, r_1) \neq \emptyset}} x_F 
        \ \ \geq \ \ \sum_{\substack{F \in \mathcal{F} \\ F \cap P' \neq \emptyset}} x_F 
        \ \ \geq \ \ d(\Bar{u}, v_p) 
        \ \ \geq \ \ d(\Bar{u}, v_{p+1}) - x_{v_{p+1}}
        \ \ > \ \ \epsilon.
        $$ 
    \end{proof}
    
    \begin{claim}\label{claim:meause_of_ball_and_boundary_is_additive}
        $\mu(B_C(\Bar{u}, r_{\ell+1})) \geq \mu(B_C(\Bar{u}, r_\ell)) + \mu(\delta_C(\Bar{u}, r_\ell))$ for every $\ell \in [\,\ell_{\max}\,]$.
    \end{claim}
    \begin{proof}
        Let $v_1 \in B_C(\Bar{u}, r_\ell)$ and $v_2 \in \delta_C(\Bar{u}, r_\ell)$. We claim that $v_1$ and $v_2$ are non-adjacent. Indeed, if $v_1v_2\in E(G)$, then by applying Observation~\ref{obs:triangle_inequality} to $\{\Bar{u},v_1,v_2\}$, we get $d(\Bar{u}, v_2) \leq d(\Bar{u}, v_1) + x_{v_2} \leq r_\ell + \epsilon$, which contradicts $v_2\in \delta_C(\Bar{u}, r_\ell)$. Hence $v_1$ and $v_2$ are non-adjacent and consequently $B_C(\Bar{u}, r_\ell)$ and $\delta_C(\Bar{u}, r_\ell)$ are anti-complete in $G$. Since $B_C(\Bar{u}, r_\ell)$ and $\delta_C(\Bar{u}, r_\ell)$ are disjoint by definition, Observation~\ref{obs:mu_is_additive} implies that $\mu\left(\delta_C(\Bar{u}, r_\ell) \cup B_C(\Bar{u}, r_\ell)\right) = \mu\left(B_C(\Bar{u}, r_\ell)\right) + \mu\left(\delta_C(\Bar{u}, r_\ell)\right)$. Furthermore, by definition, $\delta_C(\Bar{u}, r_\ell) \cup B_C(\Bar{u}, r_\ell) \subseteq B_C(\Bar{u}, r_{\ell+1})$, and hence the claim follows from Observation~\ref{obs:mu_is_monotone}.
    \end{proof}

    Now, combining Claims~\ref{claim:inner_ball_has_non_zero_measure} and~\ref{claim:meause_of_ball_and_boundary_is_additive}, we conclude that if $\mu(\delta_C(\Bar{u}, r_\ell)) > \mu(B_C(\Bar{u}, r_\ell))$ for every $\ell \in [\,\ell_{\max}\,]$, then 
    $$
    \mu(B_C(\Bar{u}, r_{\ell_{\max}})) \quad
    > \quad 2^{\ell_{\max} - 1} \cdot \mu(B_C(\Bar{u}, r_1)) \quad
    \geq \quad \frac{2^{2 \cdot \log(a \cdot \lpopt + 4) + 11} }{1300\log(a \cdot \lpopt + 4)} \quad
    > \quad a \cdot\lpopt.
    $$
    Since this contradicts $\mu(V(G)) \leq a \cdot\lpopt$, we conclude that there exists some $\ell \in [\,\ell_{\max}\,]$ for which $\mu(\delta_C(\Bar{u}, r_\ell)) \leq \mu(B_C(\Bar{u}, r_\ell))$.
\end{proof}

\begin{lemma}\label{lem:cutoff_procedure}
     Let $Z \subseteq V(G)$ be a superset of $Z_0$ and let $C$ be the connected component of $G - Z$ such that $|I\cap V(C)| > \frac{95|I|}{100}$ with $\Bar{u} \in I \cap V(C)$. Then there exists a partition $A\cup S\cup B$ of $V(C)$ such that:
    \begin{enumerate}
        \item $S$ is an $A$--$B$ separator in $G-Z$.
        \item $B \subsetneq V(C)$
        \item $|A \cap I| \leq \frac{95|I|}{100}$.
        \item $\mathrm{fcov}_{\mathcal{F}}(S) \leq 15600 \cdot \log 2n \cdot a \cdot \log(a \cdot \lpopt + 4) \cdot \mu(C\setminus N[B])$.
    \end{enumerate}
\end{lemma}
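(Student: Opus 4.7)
The plan is to use Lemmas~\ref{lem:good_layer_many_outside}--\ref{lem:good_layer_cost_and_gain} in concert with the rounding theorem~\ref{thm:A-B_separator} to construct $A$, $S$, $B$ explicitly. Fix any $\bar u \in I\cap C$ and use Lemma~\ref{lem:good_layer_cost_and_gain} to choose an index $\ell \in [\,\ell_{\max}\,]$ such that $\mu(\delta_C(\bar u, r_\ell)) \le \mu(B_C(\bar u, r_\ell))$. Set $A' := N[B_C(\bar u, r_\ell)]\cap C$ and $B' := C\setminus B_C(\bar u, r_{\ell+1})$. By Lemma~\ref{lem:good_layer_frac_separator} there is a fractional $(A',B')$-separator $\{x'_F\}_{F\in\mathcal F}$ in $G[C]$, and since $\alpha(F\cap \delta_C(\bar u, r_\ell)) \ge 1$ whenever the intersection is non-empty, one has $\sum_F x'_F \le \mu(\delta_C(\bar u, r_\ell))/\epsilon$. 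Apply Theorem~\ref{thm:A-B_separator} to $G[C]$ with this fractional separator to obtain an $A'$--$B'$ separator $S$ in $G[C]$ with $\mathrm{fcov}_{\mathcal F}(S) \le 12\log(2n)\cdot a \cdot \mu(\delta_C(\bar u, r_\ell))/\epsilon$.

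I then define $A$ to be the union of the connected components of $G[C]-S$ that meet $A'$, and put $B := C\setminus(A\cup S)$, so that $A \cup S \cup B$ is a partition of $C$. Since $C$ is a connected component of $G-Z$, every $A$--$B$ path in $G-Z$ lies inside $G[C]$, and by construction any such path must meet $S$; this gives property~(1). Property~(2) follows because $\bar u \in A' \subseteq A\cup S$, so $\bar u \notin B$ and thus $B \subsetneq C$. Because $S$ separates $A'$ from $B'$ in $G[C]$, no vertex of $B'\setminus S$ is reachable from $A'$, so $A \subseteq B_C(\bar u, r_{\ell+1})\setminus S$; combined with Lemma~\ref{lem:good_layer_many_outside}, this yields $|A\cap I| \le |I\cap B_C(\bar u, r_{\ell+1})| \le |I|-|I\setminus B_C(\bar u, r_{\ell+1})| \le 95|I|/100$, which is property~(3).

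The last step, and the main bookkeeping obstacle, is turning the bound on $\mathrm{fcov}_{\mathcal F}(S)$ into one involving $\mu(C\setminus N[B])$. The choice of $\ell$ from Lemma~\ref{lem:good_layer_cost_and_gain} lets us replace $\mu(\delta_C(\bar u, r_\ell))$ by $\mu(B_C(\bar u, r_\ell))$, and unpacking $\epsilon = 1/(1300\log(a\lpopt))$ contributes the factor $15600\log(a\lpopt)$. What remains is the crucial geometric claim $B_C(\bar u, r_\ell) \subseteq C\setminus N[B]$: any vertex $v\in B_C(\bar u, r_\ell)$ lies in $A'$, so $v \in A\cup S$ and hence $v\notin B$; moreover every neighbour of $v$ in $C$ also lies in $A' = N[B_C(\bar u, r_\ell)]\cap C$, so again in $A\cup S$, ruling out any neighbour of $v$ being in $B$. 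Combining this inclusion with Observation~\ref{obs:mu_is_monotone} yields $\mu(B_C(\bar u, r_\ell)) \le \mu(C\setminus N[B])$, which upgrades the bound into exactly property~(4).
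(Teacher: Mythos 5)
Your proof is correct and follows essentially the same route as the paper: choose $\ell$ via Lemma~\ref{lem:good_layer_cost_and_gain}, round the fractional $(A',B')$-separator of Lemma~\ref{lem:good_layer_frac_separator} with Theorem~\ref{thm:A-B_separator}, and use the inclusion $B_C(\bar u, r_\ell)\subseteq C\setminus N[B]$ to convert the bound. The only (immaterial) difference is that you assign the components of $G[C]-S$ meeting neither $A'$ nor $B'$ to $B$ rather than to $A$ as the paper does; all four properties still hold under your choice.
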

\begin{proof}
    We apply Lemma~\ref{lem:good_layer_cost_and_gain} to the instance defined by $(Z, \Bar{u})$, to obtain some $\ell \in [\,\ell_{\max}\,]$ such that $\mu(\delta_C(\Bar{u}, r_\ell)) \leq \mu(B_C(\Bar{u}, r_\ell))$. Now, let $A' := N[B_C(\Bar{u}, r_\ell)]\cap V(C)$ and $B' := V(C) \setminus B_C(\Bar{u}, r_{\ell+1})$. By Lemma~\ref{lem:good_layer_frac_separator} the assignment $\{x'_F\}_{F \in \mathcal{F}}$ defined as,
    $$
    x'_F :=
    \begin{cases}
    \frac{x_F}{\epsilon} & \text{if } F \cap \delta_C(\Bar{u}, r_\ell) \neq \emptyset, \\
    0 & \text{otherwise.}
    \end{cases}
    $$
    is a fractional $(A',B')$-separator in $C$. Therefore, the tuple $(C, a, \mathcal{F}, A', B', \{x'_F\}_{F \in \mathcal{F}})$ satisfies all the requirements of Theorem~\ref{thm:A-B_separator}. Let $S$ be the $A'$--$B'$ separator in $C$ whose existence is guaranteed by it. Define $B$ to be the set of vertices in connected components of $C - S$ that have a non-empty intersection with $B'$, and let $A := V(C) \setminus (B \cup S)$. Observe that, from definitions it follows that $S$ is an $A$--$B'$ separator and an $A$--$B$ separator in $C$ and consequently in $G-Z$. So, we have $|A \cap I| \leq |I| - |B' \cap I|$. But by Lemma~\ref{lem:good_layer_many_outside}, $|B' \cap I|$ is at least $\frac{5|I|}{100}$, implying that $|A \cap I| \leq \frac{95|I|}{100}$. 
    
    We claim that $B_C(\bar{u}, r_\ell) \cap N[B] = \emptyset$. Suppose not, and let $u \in B_C(\bar{u}, r_\ell) \cap N[B]$. Then there exists $v \in B$ such that $v \in N[u]$ which implies that $v \in A' \cap B$. However, this contradicts the fact that $S$ is an $A'$--$B$ separator in $C$. Hence, we conclude that $B_C(\bar{u}, r_\ell) \subseteq V(C) \setminus N[B]$, which, in particular, implies that $B \subsetneq V(C)$.
    
    Now, we bound the fractional cover number of $S$ using $\mathcal{F}$ as follows:
    \begin{align*}
        \mathrm{fcov}_{\mathcal{F}}(S) \quad
        &\leq \quad 12 \cdot \log 2n \cdot a \cdot \sum_{F \in \mathcal{F}} x'_F \\
        &= \quad 12 \cdot \log 2n \cdot a \cdot \sum_{\substack{F \in \mathcal{F} \\ F \cap \delta_C(\Bar{u}, r_\ell) \neq \emptyset}} \frac{x_F}{\epsilon} \\
        &\leq \quad 12 \cdot \log 2n \cdot a \cdot 1300 \cdot \log(a \cdot \lpopt + 4) \cdot \mu(\delta_C(\Bar{u}, r_\ell)) \\
        &\leq \quad 15600 \cdot \log 2n \cdot a \cdot \log(a \cdot \lpopt + 4) \cdot \mu(V(C)\setminus N[B]).
    \end{align*}
    where the last inequality follows from the fact that $\mu(\delta_C(\Bar{u}, r_\ell)) \leq \mu(B_C(\Bar{u}, r_\ell))$ and by applying Observation~\ref{obs:mu_is_monotone} to the sets $B_C(\Bar{u}, r_\ell)$ and $V(C) \setminus N[B]$.

\end{proof}

Now we are ready to prove Theorem~\ref{thm:balanced_separator_rounding}.

\LPoptVSintegral*
\begin{proof}
    We prove the following claim:\\[-4pt]
    
    \noindent\textbf{Claim~6.2.1.}\label{claim:balanced_sep_cutoff_step}
        \emph{Let $Z\subseteq V(G)$ be a superset of $Z_0$, and let $C$ be the connected component of $G - Z$ such that $|I\cap V(C)| > \frac{95|I|}{100}$, or let $C = G[\emptyset]$ if no such component exists. Then there exists an $(I, \frac{95}{100})$-balanced separator $S$ in $G$ such that $\mathrm{fcov}_{\mathcal{F}}(S) \leq \mathrm{fcov}_{\mathcal{F}}(Z) + 15600 \cdot \log 2n \cdot a \cdot \log(a \cdot \lpopt + 4) \cdot \mu(V(C))$.}
    \begin{proof}
        We proceed by induction on $|V(C)|$. Firstly, if $C = G[\emptyset]$, then we can let $S = Z$ and the lemma holds true. Otherwise, let $A'\cup S'\cup B'$ be the partition of $V(C)$ whose existence is implied by Lemma~\ref{lem:cutoff_procedure} and consider the graph $G-(Z\cup S')$. If there exists a component $C'$ of $G - (Z \cup S')$ such that $|V(C') \cap I| > \frac{95|I|}{100}$, then we observe that $V(C') \subseteq B'$, since $S'$ is an $A'$--$B'$ separator in $G - Z$ and $|A' \cap I| \leq \frac{95|I|}{100}$. Hence, $V(C') \subsetneq V(C)$, as $B' \subsetneq V(C)$. Otherwise, if no such component exists, we have $V(C') = \emptyset$ and consequently $V(C')\subsetneq V(C)$. In either case, by applying the inductive hypothesis to the pair $(G, Z \cup S')$, we obtain an $(I, \frac{95}{100})$-balanced separator $S$ in $G$ such that:
        \begin{align*}
            \mathrm{fcov}_{\mathcal{F}}(S) \quad
            &\leq \quad \mathrm{fcov}_{\mathcal{F}}(Z \cup S')\ +\ 15600 \cdot \log 2n \cdot a \cdot  \log(a \cdot \lpopt + 4) \cdot \mu(V(C')) \\
            &\leq \quad \mathrm{fcov}_{\mathcal{F}}(Z)\ +\ \mathrm{fcov}_{\mathcal{F}}(S')\ +\ 15600 \cdot \log 2n \cdot a \cdot  \log(a \cdot \lpopt + 4) \cdot \mu(V(C')) \\
            &\leq \quad \mathrm{fcov}_{\mathcal{F}}(Z)\ +\ 15600 \cdot \log 2n \cdot a \cdot  \log(a \cdot \lpopt + 4) \cdot \left(\mu(V(C)\setminus N[B'])\ +\ \mu(V(C'))\right) \\
            &\leq \quad \mathrm{fcov}_{\mathcal{F}}(Z)\ +\ 15600 \cdot \log 2n \cdot a \cdot  \log(a \cdot \lpopt + 4) \cdot \mu(V(C)).
        \end{align*}
        Here, the last inequality follows from applying Observations~\ref{obs:mu_is_additive} and~\ref{obs:mu_is_monotone}, using the facts that $V(C') \subseteq B'$ and that $V(C) \setminus N[B']$ and $B'$ are disjoint and anti-complete in $G$.
    \end{proof}
    Now, we apply Claim~\hyperref[claim:balanced_sep_cutoff_step]{6.2.1} to the pair $(G, Z_0)$ and obtain an $(I, \frac{95}{100})$-balanced separator $S$ in $G$ satisfying $\mathrm{fcov}_{\mathcal{F}}(S) \leq \mathrm{fcov}_{\mathcal{F}}(Z_0) + 15600 \cdot \log 2n \cdot a \cdot  \log(a \cdot \lpopt + 4) \cdot \mu(V(C))$. Furthermore, we observe that, $\left\{\frac{x_F}{\epsilon}\right\}_{F \in \mathcal{F}}$ is a fractional cover for $Z_0$, and consequently $\mathrm{fcov}_{\mathcal{F}}(Z_0) \leq \frac{1}{\epsilon} \lpopt$. Thus, 
    \begin{align*}
        \mathrm{fcov}_{\mathcal{F}}(S) \quad
        &\leq \quad \mathrm{fcov}_{\mathcal{F}}(Z_0)\ +\ 15600 \cdot \log 2n \cdot a \cdot  \log(a \cdot \lpopt + 4) \cdot \mu(V(C)) \\
        &\leq \quad 1300 \cdot  \log(a \cdot \lpopt + 4) \cdot \lpopt\ +\ 15600 \cdot \log 2n \cdot a \cdot  \log(a \cdot \lpopt + 4) \cdot \mu(V(G)) \\
        &\leq \quad 17000 \cdot \log 2n \cdot a^2 \cdot  \log(a \cdot \lpopt + 4) \cdot \lpopt
    \end{align*}
    which concludes the proof of Theorem~\ref{thm:balanced_separator_rounding}.
\end{proof}
    
\section{Path Packing via the Dual Balanced Separator LP}\label{sec:dual_of_balanced_separator}
    Let $G$ be a graph, $I \subseteq V(G)$ an independent set, and $\mathcal{F}$ a family of vertex subsets. Let $\mathcal{P}_{u,v}$ denote the set of all induced paths from $u$ to $v$ in $G$. We now describe the dual of the balanced separator linear program corresponding to the instance $(G, I, \mathcal{F})$. It uses non-negative real variables $\rho_u$, $\eta_{u,v}$, and $\gamma_{u,v,P}$, defined for every $u,v \in I$ and path $P \in \mathcal{P}_{u,v}$.
\begin{align}\label{lp:balanced_path_packing}
    \text{Maximize :}&\quad \frac{|I|}{10}\sum\limits_{u\in I} \rho_u - \sum\limits_{u,v \in I}\eta_{u,v}\\
    \nonumber \text{subject to :}&\quad \rho_u - \eta_{u,v} - \sum\limits_{P\in\mathcal{P}_{u,v}}\gamma_{u,v,P} \leq 0 && \forall u,v \in I\\
    \nonumber &\quad \sum\limits_{\substack{u,v \in I\\ P\in \mathcal{P}_{u,v}\\ P\cap F\neq \emptyset}} \gamma_{u,v,P} \leq 1 && \forall F\in \mathcal{F}
\end{align}
It is easy to verify that the above is indeed the dual of LP~(\ref{lp:container_cover}). Having established this, we are now ready to state the main theorem proved in this section.

\begin{restatable}{theorem}{BalancedSeparatorDualRounding}\label{thm:balanced_separator_dual_rounding}
    For every graph $G$, independent set $I \subseteq V(G)$, positive integers $a$, $b$ and $\ell$ such that $\ell\ \geq\ 7 \cdot (\,\lpopt \cdot \log (4|\mathcal{F}|)\, +\, |I|\,)$, where $\mathcal{F}$ is a $(b,a)$-container family in $G$, there exists a subgraph $H \subseteq G$ with $I \subseteq V(H)$ that satisfies the following properties:
    \begin{itemize}
        \item Every induced subgraph of $H$ with independence number at most $b$, has at most $\frac{3 \cdot b \cdot \ell}{\lpopt}$ vertices.
        \item For every $(I,\frac{1}{2})$-balanced separator $S$ in $H$, we have that $\mathrm{cov}_{\mathcal{F}}(S) \geq \lpopt$.
    \end{itemize}
    whenever $\lpopt$, which denotes the optimal value of the balanced separator linear program corresponding to $(G, I, \mathcal{F})$, is positive.
\end{restatable}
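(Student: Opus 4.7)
I will prove the theorem via a random sampling argument based on the dual of the balanced separator LP. Take an optimal solution $(\rho^*, \eta^*, \gamma^*)$ to the dual LP above with value $\lpopt$. By complementary slackness (and decreasing $\eta^*_{u,v}$ whenever $\eta^*_{u,v} > \rho^*_u$, which only increases the objective), we may assume $\eta^*_{u,v} \leq \rho^*_u$ for all $u,v$ and that $\sum_P \gamma^*_{u,v,P} = \rho^*_u - \eta^*_{u,v}$. Writing $R := \sum_u \rho^*_u$ and $H_\eta := \sum_{u,v}\eta^*_{u,v}$, the dual objective identity $\frac{|I|}{10}R - H_\eta = \lpopt$ gives $N := \sum_{u,v,P} \gamma^*_{u,v,P} = |I|R - H_\eta = \frac{9|I|R}{10} + \lpopt \geq 10\lpopt$. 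I set $\mathcal{D}(P) := \gamma^*_{u,v,P}/N$ as a probability distribution over induced $I$-to-$I$ paths in $G$, sample $\ell$ paths $P_1,\ldots,P_\ell$ independently from $\mathcal{D}$, and set $H := G[I \cup \bigcup_i V(P_i)]$.

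Two properties of $\mathcal{D}$ drive the analysis. Property (a): for each $F \in \mathcal{F}$, $\Pr_{P \sim \mathcal{D}}[V(P)\cap F \neq \emptyset] \leq 1/N$, directly from the dual constraint $\sum_{u,v,P: P\cap F\neq\emptyset}\gamma^*_{u,v,P} \leq 1$. Property (b): for every partition $(I_1,I_2)$ of $I$ with $|I_1|,|I_2| \leq \frac{2|I|}{3}$, expanding the cross mass as $\sum_{u \in I_1, v \in I_2}(\rho^*_u - \eta^*_{u,v}) + \sum_{u \in I_2, v \in I_1}(\rho^*_u - \eta^*_{u,v}) = |I_2|R_1 + |I_1|R_2 - (\text{cross }\eta^*\text{-mass})$ and using $|I_1|, |I_2| \geq |I|/3$ yields $\Pr_{P\sim\mathcal{D}}[\text{endpoints of }P\text{ split across }(I_1,I_2)] \geq |I|R/(3N) \geq 1/3$, where $R_j := \sum_{u \in I_j}\rho^*_u$. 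Applying Proposition~\ref{thm:chernoff} with two union bounds---first over the at most $|\mathcal{F}|$ containers (using $\ell \geq 7\lpopt\log(4|\mathcal{F}|)$ to ensure $6\ell/N \geq \log(4|\mathcal{F}|)$), and second over the at most $2^{|I|}$ balanced 2-part partitions of $I$ (using $\ell \geq 7|I|$)---I obtain that with probability at least $1/2$, the sample simultaneously satisfies event (A): every $F \in \mathcal{F}$ is hit by at most $6\ell/N$ sampled paths, and event (B): every partition $(I_1,I_2)$ of $I$ with $|I_1|,|I_2| \leq \frac{2|I|}{3}$ has at least $c^*\ell$ sampled paths crossing it, for an absolute constant $c^* > 0$.

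The first conclusion follows from event (A): any $S \subseteq V(H)$ with $\alpha(G[S]) \leq b$ is contained in some $F \in \mathcal{F}$ by the container property; since $I$ is independent, $|S \cap I| \leq b$, and since each $P_i$ is induced in $G$, the subgraph $G[V(P_i) \cap S]$ is a disjoint union of subpaths whose independence number is at least $\lceil |V(P_i) \cap S|/2\rceil$, forcing $|V(P_i) \cap S| \leq 2b$. Summing over the at most $6\ell/N$ paths that intersect $F$ gives $|S| \leq b + 2b \cdot (6\ell/N) \leq 3b\ell/\lpopt$, using $\ell \geq 7\lpopt$ and $N \geq 10\lpopt$. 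For the second conclusion, given an $(I, \frac{1}{2})$-balanced separator $S$ of $H$, each component of $H - S$ contributes a block of $I$-vertices of size $\leq |I|/2$, and the vertices of $I \cap S$ are singleton blocks; a standard 2-bin load-balancing argument on items of size $\leq |I|/2$ summing to $|I|$ groups these blocks into $(I_1, I_2)$ with $|I_1|, |I_2| \leq \frac{2|I|}{3}$ such that each component's $I$-vertices lie entirely on one side. Every sampled path crossing this partition must hit $S$ (either through an internal vertex bridging two distinct components or through an endpoint in $I \cap S$). Combining events (A) and (B) gives at least $c^*\ell$ paths hitting $S$ and at most $\mathrm{cov}_\mathcal{F}(S) \cdot (6\ell/N)$ paths hitting $S$, whence $\mathrm{cov}_\mathcal{F}(S) \geq c^*N/6 \geq \lpopt$ after choosing constants.

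The main obstacle is quantitative: calibrating the argument so that $c^*N/6 \geq \lpopt$. The crude bounds $c^* \geq 1/3$ and $N \geq 10\lpopt$ yield only $c^*N/6 \geq 5\lpopt/9$, short of the required $\lpopt$. Closing this gap will require either a sharper case analysis distinguishing the regime where $N$ is close to $10\lpopt$ (in which case the dual is essentially $\eta^* = 0$ with $\rho^*$ nearly uniform, forcing the crossing probability $c^*$ closer to $1/2$) from the regime where $|I|R$ substantially exceeds $10\lpopt$ (in which case $N/\lpopt$ is itself much larger and the constants align directly), or a more refined choice of distribution---for instance a slight re-weighting of $\gamma^*$ before normalization, or sampling a modest number of additional paths per endpoint to boost the crossing constant---that trades off the container-hitting and crossing bounds more favorably.
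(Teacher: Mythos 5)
Your construction of $H$ (sample $\ell$ paths from the dual-induced distribution and add $I$) matches the paper's, and your derivation of the first bullet is essentially the paper's argument. The proof of the second bullet, however, has a genuine gap that you yourself identify but do not close: your strategy is to compare the number of sampled paths that cross a balanced partition (at least $c^*\ell$) against the number that can be absorbed by a cover of $S$ (at most $\mathrm{cov}_{\mathcal{F}}(S)\cdot 6\ell/N$), and with $N\geq 10\,\lpopt$ and $c^*\leq 1/3$ this yields only $\mathrm{cov}_{\mathcal{F}}(S)\geq \tfrac{5}{9}\lpopt$. In fact the situation is slightly worse than you state: your crossing-probability bound drops the cross $\eta^*$-mass, so the honest constant is about $7/30$ rather than $1/3$; and event (B) needs a \emph{lower}-tail concentration bound for the number of crossing paths, which Proposition~\ref{thm:chernoff} (an upper-tail bound) does not supply. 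The "sharper case analysis" and "re-weighting" you gesture at are not carried out, so the second conclusion is not established.

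The paper sidesteps this counting comparison entirely. Instead of bounding the number of crossing paths and the number of container hits separately, it defines an \emph{eligible triple} $(\mathcal{F}', I_1, I_2)$ consisting of a subfamily of size at most $\lpopt$ and a balanced partition, and shows that for a single sampled triple $(u_i,v_i,P_i)$ the probability that $P_i$ simultaneously crosses $(I_1,I_2)$, is non-empty, and avoids \emph{every} set of $\mathcal{F}'$ is at least $1-\tfrac{2}{3}-\tfrac{1}{10}-\tfrac{|\mathcal{F}'|}{10\,\lpopt}\geq \tfrac{1}{10}$; a union bound over the at most $2^{|I|}\binom{|\mathcal{F}|}{\lpopt}$ eligible triples, using $\ell\geq 7(\lpopt\log(4|\mathcal{F}|)+|I|)$, then guarantees that no balanced separator of $H$ is covered by fewer than $\lpopt$ sets. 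This formulation needs only the crude per-sample success probability $1/10$ and requires no matching of constants between the two counts, which is exactly why it succeeds where your comparison falls short. To repair your proof you would need to adopt this union bound over pairs (subfamily, partition), or genuinely improve both the crossing constant and the relation between $N$ and $\lpopt$, neither of which your sketch accomplishes.
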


For the remainder of this section, we fix a graph $G$, an independent set $I \subseteq V(G)$, and a $(b,a)$-container family $\mathcal{F}$ in $G$. We fix an optimal solution to LP~(\ref{lp:balanced_path_packing}), the dual of the balanced separator linear program corresponding to $(G, I, \mathcal{F})$. Note that such a solution exists and is bounded. Indeed, since every $(b,a)$-container family satisfies the conditions of Observation~\ref{obs:lp_is_feasible}, the balanced separator linear program for $(G, I, \mathcal{F})$ is feasible and admits a bounded solution; consequently by strong duality~\cite{doi:10.1137/1025101}, the same holds for its dual. Let $\rho_u$, $\eta_{u,v}$ and $\gamma_{u,v,P}$, for every $u,v \in I$ and $P \in \mathcal{P}_{u,v}$, denote the values assigned to the corresponding variables in this solution. Let $\lpopt$ be the value of the objective function in this solution. For every $u, v \in I$, define $\gamma_{u,v} := \sum_{P \in \mathcal{P}_{u,v}} \gamma_{u,v,P}$, and let $\rho := \sum_{u \in I} \rho_u$.

\begin{lemma}\label{lem:properties_of_dual_lp}
    The optimal solution has the following properties:
    \begin{enumerate}
        \item $\sum_{v\in I}\eta_{u,v} \leq \frac{|I|}{10}\rho_u$ for every $u\in I$.
        \item $10\,\lpopt \leq \rho|I|$.
    \end{enumerate}
\end{lemma}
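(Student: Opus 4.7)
My plan is that both properties are essentially immediate consequences of the optimality of the fixed LP solution, with one being a direct observation and the other requiring a short perturbation argument.

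For property 2, I would simply unfold the objective. By definition $\lpopt = \frac{|I|}{10}\sum_{u\in I}\rho_u - \sum_{u,v\in I}\eta_{u,v} = \frac{|I|}{10}\rho - \sum_{u,v\in I}\eta_{u,v}$, and since every $\eta_{u,v}$ is non-negative, we immediately get $\lpopt \leq \frac{|I|}{10}\rho$, which rearranges to $10\,\lpopt \leq \rho|I|$. No further work is needed.

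For property 1, I would use a local-perturbation / exchange argument. Fix some $u_0 \in I$ and consider the candidate solution obtained from the optimal solution by setting $\rho_{u_0}' := 0$ and $\eta_{u_0,v}' := 0$ for every $v \in I$, while keeping $\rho_w, \eta_{w,v}, \gamma_{w,v,P}$ unchanged for all $w \neq u_0$. I would first check feasibility of this perturbed solution: the constraints of the form $\rho_w - \eta_{w,v} - \sum_P \gamma_{w,v,P} \leq 0$ with $w \neq u_0$ are untouched, the constraints with $w = u_0$ become $-\sum_P \gamma_{u_0,v,P} \leq 0$ which holds since $\gamma \geq 0$, and the $\mathcal{F}$-constraints do not involve $\rho$ or $\eta$ at all and are therefore unchanged. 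Thus the perturbed assignment is feasible.

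Next I would compute the change in objective value, which is exactly $\sum_{v\in I}\eta_{u_0,v} - \frac{|I|}{10}\rho_{u_0}$. By the optimality of the original solution this change must be non-positive, yielding $\sum_{v\in I}\eta_{u_0,v} \leq \frac{|I|}{10}\rho_{u_0}$. Since $u_0 \in I$ was arbitrary, property 1 follows. I do not foresee any real obstacle here; the only point that requires a moment of care is verifying that setting the entire ``column'' indexed by $u_0$ to zero does not break any constraint, which is immediate because the constraints only lower-bound $\eta_{u_0,v}$ by a quantity that becomes non-positive once $\rho_{u_0}=0$.
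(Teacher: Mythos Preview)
Your proof is correct and follows essentially the same approach as the paper: the paper also proves property~1 by zeroing out $\rho_{u_0}$ and all $\eta_{u_0,v}$, verifying feasibility, and using optimality to bound the change in objective, and proves property~2 by the same one-line non-negativity observation.
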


\begin{proof}
    Let $u\in I$. Given any feasible solution to the linear program, we can always obtain another solution by setting $\rho_{u} = 0$ and $\eta_{u,v} = 0$ for every $v\in I$. This preserves feasibility, since the only constraints involving these variables are of the form $\rho_{u} - \eta_{u,v} - \gamma_{u,v} \leq 0$, which remain satisfied even in the new assignment. Furthermore, the value of the objective function decreases by $\frac{|I|}{10}\rho_{u} - \sum_{v\in I}\eta_{u,v}$. But, if we start with the optimal solution, this modification should not strictly increase the objective function value. Therefore we get $\sum_{v\in I}\eta_{u,v} \leq \frac{|I|}{10}\rho_{u}$ for every $u\in I$. Furthermore,
    $$
    10\,\lpopt \quad
    = \quad 10\left(\frac{|I|}{10} \sum_{u \in I} \rho_u - \sum_{u,v \in I} \eta_{u,v}\right) \quad
    \leq \quad 10\left(\frac{|I|}{10} \sum_{u \in I} \rho_u\right) \quad
    = \quad \rho|I|.
    $$
\end{proof}
    
We define $\mathcal{P} = \bigcup_{u,v \in I} \mathcal{P}_{u,v}$, and introduce a probability distribution $\mathcal{D}$ over the domain $I\times I\times (\mathcal{P} \cup \{\emptyset\})$. The distribution $\mathcal{D}$ is defined using the following random process, which has three steps: (Step~1) Select a vertex $u \in I$ with probability $\rho_u / \rho$. (Step~2) Select a vertex $v \in I$ uniformly at random.  (Step~3) With probability $\eta_{u,v} / (\eta_{u,v} + \gamma_{u,v})$, output the triple $(u, v, \emptyset)$. Otherwise, choose a path $P \in \mathcal{P}_{u,v}$ with probability $\gamma_{u,v,P} / \gamma_{u,v}$ and output the triple $(u, v, P)$. 

We now argue that the process is well defined.
First, observe that $\lpopt > 0$ by assumption and $10\cdot\lpopt \leq \rho |I|$ by Lemma~\ref{lem:properties_of_dual_lp}, together imply $\rho > 0$. Hence $\{\, \rho_u / \rho \,\}_{u \in I}$ forms a probability distribution: each $\rho_u$ is non-negative and $\sum_{u \in I} \rho_u = \rho$. Therefore, Step~1 is well defined.
We next justify Step~3, as Step~2 is immediate.
Let $u$ be a vertex selected in Step~1. Then $\rho_u > 0$, which implies that $\eta_{u,v} + \gamma_{u,v} > 0$ for every $v \in I$, and in particular for the vertex $v$ selected in Step~2; otherwise, the constraint $\rho_u - \eta_{u,v} - \gamma_{u,v} \leq 0$ of LP~(\ref{lp:balanced_path_packing}) would be violated. If $\gamma_{u,v} = 0$, then necessarily $\eta_{u,v} / (\eta_{u,v} + \gamma_{u,v}) = 1$, and the process outputs $(u,v,\emptyset)$. Thus, the second case of Step~3 occurs only when $\gamma_{u,v} > 0$. In this case, $\{\, \gamma_{u,v,P} / \gamma_{u,v} \,\}_{P \in \mathcal{P}_{u,v}}$ forms a probability distribution, since each $\gamma_{u,v,P}$ is non-negative and $\sum_{P \in \mathcal{P}_{u,v}} \gamma_{u,v,P} = \gamma_{u,v}$.
Consequently, Step~3, and hence the entire process, is well defined.

\begin{lemma}\label{lem:probability_bounds_1}
    $\mathcal{D}$ satisfies $\displaystyle \mathop{\mathbb{P}}\limits_{(u,v,P)\sim\mathcal{D}}\left[P = \emptyset\right] \leq \frac{1}{10}$.
\end{lemma}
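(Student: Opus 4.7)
The plan is to compute $\mathbb{P}[P = \emptyset]$ directly from the three-step sampling definition of $\mathcal{D}$ and then squeeze the resulting expression using the two ingredients that are already available: the primal feasibility constraint $\rho_u - \eta_{u,v} - \gamma_{u,v} \le 0$ from the dual LP, and property~(1) of the preceding lemma, which asserts that $\sum_{v \in I} \eta_{u,v} \le \frac{|I|}{10}\rho_u$ for every $u \in I$.

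Concretely, I would first unfold the definition of the sampling process to obtain
$$\mathbb{P}_{(u,v,P)\sim\mathcal{D}}[P = \emptyset] \;=\; \sum_{u \in I}\frac{\rho_u}{\rho}\,\sum_{v \in I}\frac{1}{|I|}\cdot\frac{\eta_{u,v}}{\eta_{u,v}+\gamma_{u,v}},$$
where terms with $\rho_u = 0$ vanish because of the outer factor $\rho_u/\rho$, so only $u$ with $\rho_u > 0$ contribute. For such $u$, the LP constraint $\eta_{u,v} + \gamma_{u,v} \ge \rho_u$ gives $\frac{\eta_{u,v}}{\eta_{u,v}+\gamma_{u,v}} \le \frac{\eta_{u,v}}{\rho_u}$, which after cancelling the $\rho_u$ factor reduces the expression to
$$\mathbb{P}[P = \emptyset] \;\le\; \frac{1}{\rho\,|I|}\sum_{u \in I}\sum_{v \in I}\eta_{u,v}.$$
Summing property~(1) of the preceding lemma over all $u \in I$ gives $\sum_{u,v \in I}\eta_{u,v} \le \frac{|I|}{10}\sum_{u \in I}\rho_u = \frac{|I|\,\rho}{10}$, and substituting this back yields the claimed bound $\mathbb{P}[P = \emptyset] \le \frac{1}{10}$.

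The argument is essentially a two-line computation, so there is no real obstacle. The only mild bookkeeping point is handling terms with $\rho_u = 0$ or $\eta_{u,v}+\gamma_{u,v} = 0$; both are harmless, since such terms are multiplied by $\rho_u/\rho = 0$ and therefore contribute nothing. The core insight is simply that the two previously established inequalities are exactly the two factors that cancel to produce the constant~$\frac{1}{10}$.
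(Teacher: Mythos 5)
Your proof is correct and follows essentially the same route as the paper: expand the sampling process, use the dual constraint $\rho_u \le \eta_{u,v}+\gamma_{u,v}$ to bound $\frac{\eta_{u,v}}{\eta_{u,v}+\gamma_{u,v}}$ by $\frac{\eta_{u,v}}{\rho_u}$, and then invoke property (1) of Lemma~\ref{lem:properties_of_dual_lp}. The only cosmetic difference is that the paper applies property (1) to the inner sum for each fixed $u$ separately rather than summing it over all $u$ at the end, and your remark on the degenerate terms with $\rho_u = 0$ is a harmless extra precaution.
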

\begin{proof}
    Using Lemma~\ref{lem:properties_of_dual_lp}, together with the fact that the constraint $\rho_u - \eta_{u,v} - \gamma_{u,v} \leq 0$ holds for every $u, v \in I$, we obtain:
    \begin{align*}
        \displaystyle \mathop{\mathbb{P}}\limits_{(u,v,P)\sim\mathcal{D}}\left[P = \emptyset\right] \ 
        &= \ \sum_{\substack{u' \in I \\ \rho_{u'} > 0}} \sum_{v' \in I} \displaystyle \mathop{\mathbb{P}}\limits_{(u,v,P)\sim\mathcal{D}}\left[P = \emptyset\,\mid\, u = u',\, v = v'\right] \cdot \mathop{\mathbb{P}}\limits_{(u,v,P)\sim\mathcal{D}}\left[v = v'\right] \cdot \mathop{\mathbb{P}}\limits_{(u,v,P)\sim\mathcal{D}}\left[u = u'\right] \\
        &= \ \frac{1}{|I|} \sum_{\substack{u' \in I \\ \rho_{u'} > 0}} \frac{\rho_{u'}}{\rho} \sum_{v' \in I} \frac{\eta_{u',v'}}{\eta_{u',v'} + \gamma_{u',v'}} \quad
        \leq \quad \frac{1}{|I|} \sum_{\substack{u' \in I \\ \rho_{u'} > 0}} \frac{1}{\rho} \cdot \frac{|I|}{10} \rho_{u'} \quad
        = \quad \frac{1}{10}.
    \end{align*}
\end{proof}

\begin{lemma}\label{lem:probability_bounds_2}
    If $F\in \mathcal{F}$, then $\displaystyle \mathop{\mathbb{P}}\limits_{\substack{(u,v,P)\sim\mathcal{D}}}\left[P \cap F \neq \emptyset\right] \leq \frac{1}{10\cdot\lpopt}$.
\end{lemma}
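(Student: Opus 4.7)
The plan is to expand the probability on the left-hand side directly from the definition of the sampling procedure for $\mathcal{D}$, then use the LP constraint $\rho_u - \eta_{u,v} - \gamma_{u,v} \leq 0$ to cancel the awkward denominator $\eta_{u,v}+\gamma_{u,v}$, then apply the container-constraint $\sum_{\substack{u,v \in I, P \in \mathcal{P}_{u,v}, P \cap F \neq \emptyset}} \gamma_{u,v,P} \leq 1$, and finally invoke the second part of Lemma~\ref{lem:properties_of_dual_lp} to convert a $\frac{1}{\rho|I|}$ bound into a $\frac{1}{10\,\lpopt}$ bound.

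More concretely, for a fixed $F \in \mathcal{F}$, writing out the sampling recipe (and noting that the triple $(u,v,\emptyset)$ contributes $0$ to the event $P \cap F \neq \emptyset$), I would obtain
\[
\mathop{\mathbb{P}}_{(u,v,P)\sim\mathcal{D}}\bigl[P \cap F \neq \emptyset\bigr]
\;=\;
\sum_{u \in I} \frac{\rho_u}{\rho}\cdot\frac{1}{|I|}\sum_{v \in I}
\frac{1}{\eta_{u,v}+\gamma_{u,v}}
\sum_{\substack{P \in \mathcal{P}_{u,v}\\ P \cap F \neq \emptyset}} \gamma_{u,v,P},
\]
with the convention that the inner factor is $0$ whenever $\eta_{u,v}+\gamma_{u,v}=0$ (in which case the LP constraint forces $\rho_u = 0$, so the corresponding term in the outer sum vanishes anyway). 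Using $\rho_u \leq \eta_{u,v}+\gamma_{u,v}$ term by term collapses the expression to
\[
\mathop{\mathbb{P}}_{(u,v,P)\sim\mathcal{D}}\bigl[P \cap F \neq \emptyset\bigr]
\;\leq\;
\frac{1}{\rho|I|}
\sum_{\substack{u,v \in I\\ P \in \mathcal{P}_{u,v}\\ P \cap F \neq \emptyset}} \gamma_{u,v,P}
\;\leq\;
\frac{1}{\rho|I|},
\]
where the last inequality is the container-side constraint of LP~\ref{lp:path_packing}. Combining this with $\rho|I| \geq 10\,\lpopt$ from Lemma~\ref{lem:properties_of_dual_lp}(2) yields the claim.

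I do not anticipate any serious obstacle here; the only mildly delicate point is the handling of pairs $(u,v)$ with $\eta_{u,v}+\gamma_{u,v}=0$, which is resolved by observing that for such pairs the LP constraint forces $\rho_u = 0$ and therefore these pairs contribute nothing to the probability. Everything else is a direct application of the LP constraints and the probability-to-$\lpopt$ conversion provided by the previous lemma, mirroring structurally the computation already done in the proof of Lemma~\ref{lem:probability_bounds_1}.
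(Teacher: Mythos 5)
Your proposal is correct and follows essentially the same route as the paper: expand the sampling probability, cancel the denominator $\eta_{u,v}+\gamma_{u,v}$ via the constraint $\rho_u \leq \eta_{u,v}+\gamma_{u,v}$, apply the constraint $\sum \gamma_{u,v,P} \leq 1$, and finish with $\rho|I| \geq 10\,\lpopt$ from Lemma~\ref{lem:properties_of_dual_lp}. The only cosmetic difference is that the paper first bounds the probability of each individual triple $(u',v',P')$ by $\gamma_{u',v',P'}/(\rho|I|)$ and then sums, whereas you manipulate the full sum at once (and you additionally handle the degenerate case $\eta_{u,v}+\gamma_{u,v}=0$ explicitly, which the paper leaves implicit).
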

\begin{proof}
    Let $u',v'\in I$ and $P'\in\mathcal{P}_{u',v'}$. Observe that if $\rho_{u'} = 0$ or $\gamma_{u',v'} = 0$, then it follows from the definition of the process that,
    $$
    \mathop{\mathbb{P}}\limits_{\substack{(u,v,P)\sim\mathcal{D}}}\left[(u,v,P) = (u',v',P')\right] = 0.
    $$
    Hence, let us consider the case where $\rho_{u'} > 0$ and $\gamma_{u',v'} > 0$. Using the fact that the constraint $\rho_u - \eta_{u,v} - \gamma_{u,v} \leq 0$ holds for every $u, v \in I$, we obtain:
    \begin{align*}
        \displaystyle \mathop{\mathbb{P}}\limits_{\substack{(u,v,P)\sim\mathcal{D}}}[(u,v,P) = (u',v',P')] \ 
        &= \ \displaystyle \mathop{\mathbb{P}}\limits_{\substack{(u,v,P)\sim\mathcal{D}}}\left[P = P' \mid v = v', u = u'\right] \cdot \mathop{\mathbb{P}}\limits_{\substack{(u,v,P)\sim\mathcal{D}}}\left[v = v'\right] \cdot \mathop{\mathbb{P}}\limits_{\substack{(u,v,P)\sim\mathcal{D}}}\left[u = u'\right] \\
        &= \ \frac{\gamma_{u',v',P'}}{\gamma_{u',v'}} \cdot \frac{\gamma_{u',v'}}{\gamma_{u',v'} + \eta_{u',v'}} \cdot \frac{1}{|I|} \cdot \frac{\rho_{u'}}{\rho} \quad 
        \leq \quad \frac{\gamma_{u',v',P'}}{\rho |I|}.
    \end{align*}
    Now, we fix $F \in \mathcal{F}$ and consider the probability that the sampled set $P$ has a non-empty intersection with $F$.
    \begin{align*}
        \displaystyle \mathop{\mathbb{P}}\limits_{\substack{(u,v,P)\sim\mathcal{D}}}\left[\,P \cap F \neq \emptyset\,\right] \quad
        &= \quad \sum_{u',v'\in I} \sum_{\substack{P' \in \mathcal{P}_{u',v'} \\ P' \cap F \neq \emptyset}} 
        \displaystyle \mathop{\mathbb{P}}\limits_{\substack{(u,v,P)\sim\mathcal{D}}}\left[\, (u,v,P) = (u',v',P')\, \right] \\
        &\leq \quad \frac{1}{\rho |I|}\sum_{\substack{u',v'\in I\\ P' \in \mathcal{P}_{u',v'} \\\rho_{u'} > 0\\ \gamma_{u',v'} > 0 \\ P' \cap F \neq \emptyset}} \gamma_{u',v',P'} \quad 
        \leq \quad \frac{1}{\rho |I|} \quad
        \leq \quad \frac{1}{10 \cdot \lpopt}.
    \end{align*}
    Here, the last inequality follows from Lemma~\ref{lem:properties_of_dual_lp}, and the second-to-last follows from the fact that 
    the constraint, $\sum_{\substack{u,v \in I\\ P\in \mathcal{P}_{u,v}\\ P\cap F\neq \emptyset}} \gamma_{u,v,P} \leq 1$, holds for every $F \in \mathcal{F}$.
\end{proof}

Finally, we require the following lemma, which relates balanced separators of the set $A$ to $A_1$--$A_2$ separators for a specific partition $A_1$, $A_2$ of $A$.

\begin{lemma}\label{lem:separation_vs_separators}
    Let $G$ be a graph and $A\subseteq V(G)$ be a vertex subset. If $G$ has an $(A,\frac{1}{2})$-balanced separator $S$, then there exists a partition $A_1$, $A_2$ of $A$ such that $\max\{|A_1|, |A_2|\} \leq \frac{2|A|}{3}$ and $S$ is an $A_1$--$A_2$ separator.
\end{lemma}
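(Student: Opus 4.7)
My plan is to partition $A$ according to the connected components of $G-S$, using the fact that a partition respecting these components automatically yields a separator. Let $C_1,\dots,C_k$ be the connected components of $G-S$, set $n_i = |C_i \cap A|$, and write $T = |A|$. The key structural observation is: any partition $A_1 \cup A_2 = A$ that places each $C_i \cap A$ entirely in one part makes $S$ an $A_1$--$A_2$ separator, regardless of how the vertices of $A \cap S$ are assigned. Indeed, any $A_1$--$A_2$ path either begins at a vertex of $A \cap S$ (and so already contains a vertex of $S$) or traverses at least two distinct components of $G-S$ (and so must use a vertex of $S$). This reduces the problem to a two-bin balancing problem: treat each $C_i \cap A$ as an indivisible block of size $n_i$ and each vertex of $A \cap S$ as a free item of size $1$; the total size is $T$, each item has size at most $T/2$ (by the $(A,\tfrac{1}{2})$-balancedness of $S$), and I want to fill two bins each of size at most $2T/3$.

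I will finish with a short case split on the largest block. If some $n_j > T/3$, place $C_j \cap A$ alone in $A_1$ and everything else in $A_2$; then $|A_1| = n_j \leq T/2$ and $|A_2| = T - n_j < 2T/3$, and both bounds are satisfied. Otherwise every item has size at most $T/3$, and I apply greedy load balancing: assign items one at a time, always to the currently smaller bin. Let $a$ be the last item placed into whichever bin ends up larger (call it $A_1$, of final size $B$). At the moment $a$ was placed, $A_1$ had size $B - a$ and $A_2$ had size at least $B - a$ (by the greedy choice); since no later item goes to $A_1$, the final size of $A_2$ is also at least $B - a$, so $T \geq 2B - a$, giving $B \leq (T+a)/2 \leq (T + T/3)/2 = 2T/3$.

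I do not anticipate any real obstacle beyond the elementary packing argument. The only subtlety worth flagging is confirming that the free reassignment of $A \cap S$ vertices cannot break the separator property, but this is exactly the observation from the first paragraph, and the case split handles every configuration of block sizes allowed by the $(A, \tfrac{1}{2})$-balancedness hypothesis.
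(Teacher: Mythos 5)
Your proof is correct and takes essentially the same route as the paper's: both reduce the problem to balancing the indivisible blocks $C_i\cap A$ (with the vertices of $A\cap S$ freely assignable) into two parts of size at most $\frac{2|A|}{3}$, the paper via a prefix of the blocks sorted by size and you via greedy load balancing. The only nit is that your ``otherwise'' branch assumes every item has size at most $T/3$, which a unit item from $A\cap S$ can violate when $|A|\leq 2$; that degenerate case is immediate (and the paper's write-up is equally casual about it).
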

\begin{proof}
    If $S\cap A$ contains at least $\frac{|A|}{2}$ elements, then selecting any subset $A_1 \subseteq S\cap A$ of size $\frac{|A|}{2}$ and setting $A_2 := A \setminus A_1$ suffices to prove the lemma. Otherwise, let $C_1, \dots, C_r$ be the connected components of $G-S$ that have a non-empty intersection with $A$. Let $A'_i := A \cap V(C_i)$ for every $i \in [r]$, and let $A'_{r+1} := A \cap S$. By relabeling the sets if necessary, assume that $|A'_i| \geq |A'_{i+1}|$ for all $i \in [r]$. Let $q$ be the smallest integer in $[r+1]$ such that $\sum_{i=1}^{q}|A'_i| \geq \frac{|A|}{3}$. We claim that $\sum_{i=1}^{q}|A'_i| \leq \frac{2|A|}{3}$. If $q = 1$, the claim holds trivially, as $S$ is an $(A,\frac{1}{2})$-balanced separator and consequently $|A'_1|\leq \frac{|A|}{2}$. Otherwise, observe that
    $$
    \sum_{i=1}^{q}|A'_i| \quad
    = \quad |A'_{q}|\ +\ \sum_{i=1}^{q-1}|A'_i| \quad
    \leq \quad |A'_{q-1}|\ +\ \sum_{i=1}^{q-1}|A'_i| \quad
    \leq \quad 2\sum_{i=1}^{q-1}|A'_i| \quad
    \leq \quad \frac{2|A|}{3}.
    $$
    Define $A_1 := \cup_{i=1}^{q} A'_i$ and $A_2 := A\setminus A_1$. Since the sets in $\{A'_i\}_{i\in [r+1]}$ are pairwise disjoint, we have that $|A_1| = \sum_{i=1}^{q}|A'_i|$. Hence, $|A_1| \leq \frac{2|A|}{3}$ and since $|A_1| \geq \frac{|A|}{3}$, we have $|A_2| \leq \frac{2|A|}{3}$. Furthermore, it follows from our construction of $A_1$, $A_2$ that $S$ is an $A_1$--$A_2$ separator, which concludes the proof of the lemma.
\end{proof}

Now we are ready to prove Theorem~\ref{thm:balanced_separator_dual_rounding}.

\BalancedSeparatorDualRounding*
\begin{proof}
    Let $\{(u_i, v_i, P_i)\}_{i\in[\,\ell\,]}$ be $\ell$ independent samples drawn from the distribution $\mathcal{D}$ over $I\times I\times (\mathcal{P} \cup \{\emptyset\})$. Let $H$ be the subgraph of $G$ induced by the union of all sampled sets and $I$, that is, $H = G[I\cup\bigcup_{i \in [\ell]} V(P_i)]$. We claim that $H$ has the desired properties with good probability.

    \begin{claim}\label{claim:small_clique}
        With probability at least $\frac{3}{4}$, the following holds: every induced subgraph of $H$ with independence number at most $b$, has at most $\frac{3 \cdot b \cdot \ell}{\lpopt}$ vertices.
    \end{claim}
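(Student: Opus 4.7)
The plan is to condition on a high-probability event concerning how many sampled paths meet each container $F$, and then exploit the induced-path structure of the $P_i$ together with the container property to bound the size of every induced $W \subseteq H$ with $\alpha(W) \leq b$.

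First, for each $F \in \mathcal{F}$, let $Y_F$ denote the number of indices $i \in [\ell]$ with $V(P_i) \cap F \neq \emptyset$. Since the samples are independent, $Y_F$ is a sum of $\ell$ independent Bernoulli variables each with success probability at most $\frac{1}{10\lpopt}$ by Lemma~\ref{lem:probability_bounds_2}, so $\mathbb{E}[Y_F] \leq \frac{\ell}{10\lpopt}$. I would then set $R := \ell/\lpopt$, which exceeds $6\mathbb{E}[Y_F]$, and apply Proposition~\ref{thm:chernoff} to obtain $\mathbb{P}[Y_F \geq R] \leq 2^{-R}$. A union bound over $\mathcal{F}$ combined with the hypothesis $\ell \geq 7(\lpopt \cdot \log(4|\mathcal{F}|) + |I|) \geq \lpopt \cdot \log(4|\mathcal{F}|)$ shows that, with probability at least $\tfrac{3}{4}$, every $F \in \mathcal{F}$ satisfies $Y_F \leq \ell/\lpopt$.

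Next, assume this event holds and let $W$ be an arbitrary induced subgraph of $H$ with $\alpha(W) \leq b$. Since $\mathcal{F}$ is a $(b,a)$-container family of $G$, there exists $F \in \mathcal{F}$ with $V(W) \subseteq F$; in particular, $V(W) \cap V(P_i) = \emptyset$ whenever $V(P_i)$ misses $F$, so at most $Y_F$ of the paths contribute. I partition $V(W)$ as $(V(W) \cap I) \cup \bigcup_{i \in [\ell]}(V(W) \cap V(P_i))$. The first piece is contained in the independent set $I$ and sits inside $W$, so its cardinality is at most $\alpha(W) \leq b$. For each $i$, the set $V(W) \cap V(P_i)$ consists of vertices on an induced path of $G$; listing them in path order and retaining every second one yields an independent subset of at least half the size, so $|V(W) \cap V(P_i)| \leq 2\,\alpha(V(W) \cap V(P_i)) \leq 2b$.

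Combining these estimates on the good event, $|V(W)| \leq b + 2b \cdot Y_F \leq b + \tfrac{2b\ell}{\lpopt} \leq \tfrac{3b\ell}{\lpopt}$, where the last inequality uses $\lpopt \leq \ell$, implied by $\ell \geq 7\lpopt \log(4|\mathcal{F}|) \geq 14\lpopt$. The main obstacle I anticipate is the conceptual step of bounding $|V(W) \cap V(P_i)|$ by $2b$ rather than by the easier $2a$: this refinement is critical, since the target bound must scale with $b$ (which will later be poly-logarithmic) and not with $a$, and it is precisely the inducedness of the sampled paths, combined with $\alpha(W) \leq b$, that forces the stronger estimate.
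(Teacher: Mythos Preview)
Your proposal is correct and follows essentially the same approach as the paper's proof: bound $\mathbb{E}[Y_F]$ via Lemma~\ref{lem:probability_bounds_2}, apply the Chernoff bound and a union bound over $\mathcal{F}$ to control $\max_F Y_F$, then use the container property together with the fact that each $P_i$ is an induced path to bound $|V(W)\cap V(P_i)|\le 2b$ and conclude. The only cosmetic differences are that the paper chooses the Chernoff threshold $R=6\ell/(10\lpopt)$ rather than your $R=\ell/\lpopt$, and that what you call a ``partition'' of $V(W)$ is really just a cover (the sets may overlap at path endpoints in $I$), but neither point affects the argument.
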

    \begin{proof}
        Let $F \in \mathcal{F}$, and let $\chi_F$ denote the number of sets $P_i$, out of the $\ell$ samples, that intersect $F$. We define $\chi_{\mathcal{F}} := \max\{\chi_F\, |\, F\in\mathcal{F}\}$ and show that with probability at least $\frac{3}{4}$, the event $\chi_{\mathcal{F}} < 6 \cdot \frac{\ell}{10\,\lpopt}$ occurs. 
        
        Let $F\in\mathcal{F}$. Since the probability that a fixed set $P_i$ intersects $F$ is at most $\frac{1}{10\, \lpopt}$, it follows from linearity of expectation that the expected value of $\chi_F$ is at most $\frac{\ell}{10\, \lpopt}$. Applying union bound over all $F\in\mathcal{F}$, the Chernoff bound from Proposition~\ref{thm:chernoff} to $\chi_F$ for every $F\in\mathcal{F}$, and using the lower bound on $\ell$, we get: 
        $$
        \mathbb{P}\left[\chi_{\mathcal{F}} < 6 \cdot \frac{\ell}{10\,\lpopt}\right] \quad
        \geq \quad 1\ -\ \sum_{F\in\mathcal{F}}\mathbb{P}\left[\chi_F \geq 6 \cdot \frac{\ell}{10\,\lpopt}\right] \quad
        \geq \quad 1\ -\ |\mathcal{F}| \cdot 2^{-6 \cdot \frac{\ell}{10\,\lpopt}} \quad
        \geq \quad \frac{3}{4}.
        $$
        Suppose that in our sampled subgraph $H$, the event $\chi_{\mathcal{F}} < 6 \cdot \frac{\ell}{10\,\lpopt}$ occurs. Let $H'$ be an induced subgraph of $H$ with independence number at most $b$. Since the paths in $\mathcal{P}$ are induced and the independence number of $H'$ is at most $b$, we have $|V(P_i) \cap H'| \leq 2b$, for every $i \in [\ell]$. As $H'$ is an induced subgraph of $G$, and since $\mathcal{F}$ is a $(b,a)$-container family, there exists some $F \in \mathcal{F}$ such that $V(H')\subseteq F$. Therefore, the number of sets in $P_1,\,\dots\,, P_{\ell}$ that intersect $H'$ is at most $\chi_{\mathcal{F}}$. Furthermore, since $I$ is an independent set in $G$, we have $|I\cap H'| \leq b$. Thus, $|V(H')| \leq |I\cap V(H')| + \sum_{i\in[\ell]}|V(P_i) \cap H'| < b + 2b \cdot \chi_F$. Therefore, with probability at least $\frac{3}{4}$, we have that every induced subgraph of $H$ with independence number at most $b$, has at most $\frac{3 \cdot b \cdot \ell}{\lpopt}$ vertices.
        
        
    \end{proof}

    \begin{claim}\label{claim:no_balanced_separator}
        With probability at least $\frac{3}{4}$, the following holds: for every subfamily $\mathcal{F}' \subseteq \mathcal{F}$ of size at most $\lpopt$, and for every partition $I_1, I_2$ of $I$ such that $\max\{|I_1|, |I_2|\} \leq \frac{2|I|}{3}$, there exists an $I_1$--$I_2$ path in $H$ that does not intersect any set in $\mathcal{F}'$. 
    \end{claim}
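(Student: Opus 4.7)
The plan is a direct first-moment argument. Fix a configuration $(\mathcal{F}', I_1, I_2)$ with $|\mathcal{F}'| \leq \lpopt$ and $\max\{|I_1|, |I_2|\} \leq 2|I|/3$, and call a sampled triple $(u_i, v_i, P_i)$ \emph{good for $(\mathcal{F}', I_1, I_2)$} if $P_i \neq \emptyset$, exactly one of $u_i, v_i$ lies in $I_1$, and $V(P_i) \cap F = \emptyset$ for every $F \in \mathcal{F}'$. Each sampled path is induced in $G$ and satisfies $V(P_i) \subseteq V(H)$, so any good sample immediately witnesses an $I_1$--$I_2$ path in $H$ that avoids $\mathcal{F}'$. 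The claim therefore reduces to showing that, with probability at least $3/4$, every configuration admits at least one good sample among the $\ell$ draws.

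The probability that a single sample is good for a fixed configuration is at least $2/15$. Three ingredients combine. Since $u$ and $v$ are drawn independently under $\mathcal{D}$ and $v$ is uniform on $I$, writing $q = \sum_{u \in I_1} \rho_u/\rho$, we get $\mathbb{P}[\{u,v\} \text{ crosses the partition}] = q \cdot |I_2|/|I| + (1-q) \cdot |I_1|/|I| \geq \min\{|I_1|,|I_2|\}/|I| \geq 1/3$, using $\max\{|I_1|,|I_2|\} \leq 2|I|/3$. Lemma~\ref{lem:probability_bounds_1} gives $\mathbb{P}[P = \emptyset] \leq 1/10$, and Lemma~\ref{lem:probability_bounds_2} applied individually to each $F \in \mathcal{F}'$ together with a union bound yields $\mathbb{P}[V(P) \cap F \neq \emptyset \text{ for some } F \in \mathcal{F}'] \leq |\mathcal{F}'|/(10\,\lpopt) \leq 1/10$. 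Inclusion--exclusion then gives $\mathbb{P}[\text{good}] \geq 1/3 - 1/10 - 1/10 = 2/15$.

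By independence of the $\ell$ samples, the probability that none is good for a given configuration is at most $(13/15)^\ell \leq e^{-2\ell/15}$. A union bound over the at most $2^{|I|}$ partitions and the at most $(|\mathcal{F}|+1)^{\lpopt+1}$ subfamilies of size at most $\lpopt$ shows that the total failure probability is bounded by $2^{|I|} \cdot (|\mathcal{F}|+1)^{\lpopt+1} \cdot e^{-2\ell/15}$; a routine calculation confirms that the hypothesis $\ell \geq 7(\lpopt \log(4|\mathcal{F}|) + |I|)$ forces this expression to be at most $1/4$, which finishes the proof.

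The only nontrivial point is arranging the constants. The crossing-probability bound $1/3$ is exactly what Lemma~\ref{lem:separation_vs_separators} is calibrated to produce (and is the reason Theorem~\ref{thm:balanced_separator_dual_rounding} is phrased for $(I,\tfrac{1}{2})$-balanced separators via the intermediate $2/3$ threshold), while the two $1/10$ losses come directly from the coefficients in Lemmas~\ref{lem:probability_bounds_1} and~\ref{lem:probability_bounds_2}; matching these against the $\ln 2$ factors introduced by the union bound is what fixes the constant $7$ in the hypothesis on $\ell$.
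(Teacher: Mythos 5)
Your proposal is correct and follows essentially the same route as the paper: a per-sample success probability of at least $1/3 - 1/10 - 1/10$ (the paper rounds this down to $1/10$ where you keep $2/15$), independence of the $\ell$ samples, and a union bound over the at most $2^{|I|}$ partitions and the subfamilies of size at most $\lpopt$, with the hypothesis on $\ell$ absorbing the exponents. The only differences are cosmetic choices of constants in the final calculation.
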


    \begin{proof}
        We define an {\em eligible triple} to be a triple $(\mathcal{F}', I_1, I_2)$ such that $\mathcal{F}' \subseteq \mathcal{F}$ is a subfamily of size at most $\lpopt$, and $I_1$, $I_2$ is a partition of $I$ satisfying $\max\{|I_1|, |I_2|\} \leq \frac{2|I|}{3}$. An eligible triple $(\mathcal{F}', I_1, I_2)$ is {\em bad} if every $I_1$--$I_2$ path in $H$ intersects at least one set in $\mathcal{F}'$. Let $\chi$ be the event that the conclusion of the claim is false, namely that there exists an eligible bad triple. To prove the claim it suffices to show $\mathbb{P}[\chi] \leq \frac{1}{4}$.
        
        For every eligible triple  $(\mathcal{F}', I_1, I_2)$, we denote by $\chi({\cal F}', I_1, I_2)$ the event that this triple is bad. We will prove that for every eligible triple $(\mathcal{F}', I_1, I_2)$ we have $\mathbb{P}[\chi({\cal F}', I_1, I_2)] \leq (\frac{9}{10})^\ell$. Then $\mathbb{P}[\chi] \leq \frac{1}{4}$ follows by a simple union bound over all eligible triples. More concretely we have the following.
        $$
            \mathbb{P}\left[\chi\right] \quad \leq \quad \left(\frac{9}{10}\right)^{\ell} \cdot 2^{|I|} \cdot \binom{|\mathcal{F}|}{|\mathcal{F}'|} \quad
            \leq \quad 2^{|I|\ +\ |\mathcal{F}'| \log |\mathcal{F}|\ -\ \ell \log\left(\frac{10}{9}\right)} \quad
            \leq \quad \frac{1}{4}.
        $$
        
        To prove that $\mathbb{P}[\chi({\cal F}', I_1, I_2)] \leq (\frac{9}{10})^\ell$ we observe that the event $\chi({\cal F}', I_1, I_2)$ does not occur if there exists an $i \in [\ell]$ such that $P_i$ is an $I_1$--$I_2$ path in $H$ that is disjoint from every set in $\mathcal{F}'$. For every eligible triple $(\mathcal{F}', I_1, I_2)$ and every $i \in [\ell]$ we define the event $\overline{\chi}({\cal F}', I_1, I_2, i)$ that $P_i$ is an $I_1$--$I_2$ path in $H$ that is disjoint from every set in $\mathcal{F}'$. For every eligible triple $({\cal F}', I_1, I_2)$ the events in $\{\overline{\chi}({\cal F}', I_1, I_2, i)\}_{i \in [\ell]}$ are independent. Thus, to prove  $\mathbb{P}[\chi({\cal F}', I_1, I_2)] \leq (\frac{9}{10})^\ell$ it suffices to show that $\mathbb{P}[\overline{\chi}({\cal F}', I_1, I_2, i)] \geq \frac{1}{10}$ for every $i \in [\ell]$.
        
        To lower bound $\mathbb{P}[\overline{\chi}({\cal F}', I_1, I_2, i)]$ we observe that $(u_i, v_i, P_i)$ is sampled according to the distribution ${\cal D}$. The event $\overline{\chi}({\cal F}', I_1, I_2, i)$ occurs unless $|\{u_i, v_i\} \cap I_1| \neq 1$, or $P_i = \emptyset$, or there exists $F \in {\cal F}'$ such that $P_i \cap F \neq \emptyset$. Since $v_i$ is sampled uniformly from $I$ we have that  $\mathbb{P}[v_i \in I_1 | u_i \in I_1] \leq \frac{2}{3}$ and $\mathbb{P}[v_i \in I_2 | u_i \in I_2] \leq \frac{2}{3}$. The law of conditional probability applied to the event $u_i \in I_1$ now yields $\mathbb{P}[|\{u_i, v_i\} \cap I_1| \neq 1] \leq \frac{2}{3}$. By Lemma~\ref{lem:probability_bounds_1} we have that $\mathbb{P}[P_i = \emptyset] \leq \frac{1}{10}$. For every $F \in {\cal F}'$, by Lemma~\ref{lem:probability_bounds_2} we have that $\mathbb{P}[P_i \cap F \neq \emptyset] \leq \frac{1}{10 \cdot \lpopt}$. A union bound over all $F \in {\cal F}'$ yields that the probability that there exists $F \in {\cal F}'$ such that $P_i \cap F \neq \emptyset$ is at most $\frac{|{\cal F}'|}{10 \cdot \lpopt} \leq \frac{1}{10}$. We conclude that $\mathbb{P}[\overline{\chi}({\cal F}', I_1, I_2, i)] \geq 1 - \frac{2}{3} - \frac{1}{10} - \frac{1}{10} \geq \frac{1}{10}$, giving the desired lower bound on $\mathbb{P}[\overline{\chi}({\cal F}', I_1, I_2, i)]$, and completing the proof of the claim. 
    \end{proof}

Since the probability that the sampled subgraph satisfies both the properties stated in Claim~\ref{claim:small_clique} and Claim~\ref{claim:no_balanced_separator} is at least $\frac{1}{2}$, there exists an induced subgraph $H \subseteq G$ that satisfies both. Let $H$ be such an induced subgraph. By Claim~\ref{claim:small_clique} every induced subgraph of $H$ with independence number at most $b$, has at most $\frac{3 \cdot b \cdot \ell}{\lpopt}$ vertices. Furthermore, combining Lemma~\ref{lem:separation_vs_separators} with Claim~\ref{claim:no_balanced_separator}, we conclude that $H$ has no $(I,\frac{1}{2})$-balanced separator $S$ with $\mathrm{cov}_{\mathcal{F}}(S) \leq \lpopt$, thereby completing the proof of Theorem~\ref{thm:balanced_separator_dual_rounding}.

\end{proof}

\section{Proofs of Main Theorems}\label{sec:proofOfTheorems}
    \begin{theorem}\label{thm:mengers_with_F}
    Let $G$ be a graph, $A, B \subseteq V(G)$ be vertex subsets, $f$, $a$ be positive integers, and ${\cal F}$ be a family of vertex subsets satisfying $\alpha(F)\leq a$ for every $F\in\mathcal{F}$. Then, either there exists an $A$--$B$ separator $S$ in $G$ such that $\mathrm{fcov}_{\mathcal{F}}(S) \leq 12 \cdot a \cdot f \cdot \log 2n$, or, for every $\ell \geq \log 2|\mathcal{F}|$, there exists a multiset $\mathcal{Q}$ of induced $A$--$B$ paths in $G$ of cardinality at least $f \cdot \ell$, such that for every $F \in \mathcal{F}$, the number of paths in $\mathcal{Q}$ that have a non-empty intersection with $F$ is at most $6 (\ell+1)$.
\end{theorem}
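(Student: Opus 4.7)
The plan is to obtain this dichotomy by a straightforward case analysis on the optimum value of the fractional $(A,B)$-separator LP using $\mathcal{F}$, combining the primal rounding result (Theorem~\ref{thm:A-B_separator}) and the dual path-packing result (Theorem~\ref{thm:A-B_path_packing}) proved in Sections~\ref{sec:ab_seps} and~\ref{sec:ab_sampling}. Let $f^\star$ denote the minimum of $\sum_{F \in \mathcal{F}} x_F$ over all fractional $(A,B)$-separators $\{x_F\}_{F \in \mathcal{F}}$ in $G$ (if no $A$--$B$ path exists in $G$ the empty set is a trivial separator and we are immediately in the first alternative, so we may assume $f^\star$ is achieved by some optimal assignment).

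First I would handle the case $f^\star \leq f$. Let $\{x^\star_F\}_{F \in \mathcal{F}}$ be an optimal fractional $(A,B)$-separator achieving $f^\star$. Applying Theorem~\ref{thm:A-B_separator} to the tuple $(G, a, \mathcal{F}, A, B, \{x^\star_F\})$ yields an $A$--$B$ separator $S$ with
\[
\mathrm{fcov}_{\mathcal{F}}(S) \;\leq\; 12 \cdot \log 2n \cdot a \cdot f^\star \;\leq\; 12 \cdot a \cdot f \cdot \log 2n,
\]
which is exactly the first alternative of the theorem.

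Next I would handle the complementary case $f^\star > f$. Fix any $\ell \geq \log 2|\mathcal{F}|$ and apply Theorem~\ref{thm:A-B_path_packing} to $(G, A, B, \mathcal{F})$. This produces a multiset $\mathcal{Q}$ of induced $A$--$B$ paths of cardinality at least $f^\star \cdot \ell > f \cdot \ell$ such that every $F \in \mathcal{F}$ is intersected by at most $6\ell$ paths in $\mathcal{Q}$. Since the cardinality is at least $f \cdot \ell$ and the hitting bound of $6\ell$ is exactly what the second alternative asks for, this completes the case and the proof.

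There is no genuine obstacle: the theorem is essentially a packaging of the two preceding theorems into a single Menger-type dichotomy, with the case split occurring at the natural threshold $f^\star \lessgtr f$ on the LP optimum. The only point to be careful about is that $f^\star$ need not be an integer, but this is harmless because in the first case we only use $f^\star \leq f$ to upper bound the loss from rounding, and in the second case we only use $f^\star > f$ to ensure the packing multiset is large enough.
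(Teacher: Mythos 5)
Your proposal is correct and matches the paper's proof: both split on whether the optimum value of the fractional $(A,B)$-separator LP is at most $f$, applying Theorem~\ref{thm:A-B_separator} in the first case and Theorem~\ref{thm:A-B_path_packing} in the second. The only cosmetic difference is that the paper phrases the threshold via the LP over induced paths and then observes its optimum equals your $f^\star$, which you use directly.
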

\begin{proof}
    Let $\mathcal{P}$ represent the set of induced $A$--$B$ paths in $G$. We recall the $A$--$B$ separator linear program corresponding to the instance $(G, A, B, \mathcal{F})$, formulated using non-negative real variables $\{x_F\}_{F \in \mathcal{F}}$.
    \begin{align*}
        \text{Minimize :}&\quad \sum\limits_{F\in\mathcal{F}} x_{F}\\
        \text{subject to :}&\quad \sum\limits_{\substack{F\in \mathcal{F}\\ F\cap P\neq \emptyset}} x_{F} \geq 1 && \forall P\in \mathcal{P}
    \end{align*}
    Let $\{x_F\}_{F \in \mathcal{F}}$ be an optimal solution to this linear program. Observe that $\{x_F\}_{F \in \mathcal{F}}$ is a fractional $(A, B)$-separator. This is because, if the constraint $\sum\limits_{\substack{F\in \mathcal{F}\\ F\cap P\neq \emptyset}} x_{F} \geq 1$ holds for every induced $A$--$B$ path $P$, then it also holds for every $A$--$B$ path, since for any such path $P$ in $G$, there exists an induced $A$--$B$ path contained within $G[P]$. Hence, if $\sum_{F \in \mathcal{F}} x_F \leq f$, then applying Theorem~\ref{thm:A-B_separator} to the instance $(G, A, B, \mathcal{F}, \{x_F\}_{F \in \mathcal{F}})$ guarantees the existence of an $A$--$B$ separator $S$ in $G$ such that
    $$
    \mathrm{fcov}_{\mathcal{F}}(S) \quad
    \leq \quad 12 \cdot a \cdot \log 2n \cdot \sum_{F \in \mathcal{F}} x_F \quad
    \leq \quad 12 \cdot a \cdot f \cdot \log 2n.
    $$
    Otherwise, let $f'$ denote the minimum value of $\sum_{F \in \mathcal{F}} x_F$ over all fractional $(A, B)$-separators $\{x_F\}_{F \in \mathcal{F}}$, and note that $f' > f$. Applying Theorem~\ref{thm:A-B_path_packing} to the instance $(G, A, B, \mathcal{F}, f')$ guarantees the existence of a multiset $\mathcal{Q}$ of induced $A$--$B$ paths in $G$ of cardinality at least $f' \cdot \ell \geq f \cdot \ell$, such that for every $F \in \mathcal{F}$, the number of paths in $\mathcal{Q}$ intersecting $F$ is at most $6 (\ell+1)$. This concludes the proof of the theorem.
\end{proof}

Now we relate the independence number of a set to its fractional cover number using families whose elements have low independence number. This helps us transition between fractional cover number using $(b,a)$-container families and independence number, in Theorems~\ref{thm:ab_separator_in_class} and~\ref{thm:tw_and_talpha}.

\begin{lemma}\label{lem:fractional_cover_independence_bound}
    Let $G$ be a graph, $S\subseteq V(G)$ be a vertex subset, $a$ be a positive integer and $\mathcal{F}$ be a family of vertex subsets of $G$ such that $\alpha(F)\leq a$ for every $F\in\mathcal{F}$. Then $\alpha(S) \leq a \cdot \mathrm{fcov}_{\mathcal{F}}(S)$
\end{lemma}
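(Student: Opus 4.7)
The plan is to use LP duality-style double counting on a maximum independent set of $G[S]$ and an optimal fractional cover of $S$ by $\mathcal{F}$.

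First I would fix a maximum independent set $I \subseteq S$ in $G[S]$, so that $|I| = \alpha(S)$, and an optimal fractional cover $\{x_F\}_{F \in \mathcal{F}}$ of $S$, so that $\sum_{F \in \mathcal{F}} x_F = \mathrm{fcov}_{\mathcal{F}}(S)$ and $\sum_{F \ni v} x_F \geq 1$ for every $v \in S$. Summing the covering inequality over $v \in I$ and then switching the order of summation gives
\[
|I| \;\leq\; \sum_{v \in I} \sum_{\substack{F \in \mathcal{F} \\ F \ni v}} x_F \;=\; \sum_{F \in \mathcal{F}} x_F \cdot |F \cap I|.
\]

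The key observation is then that $|F \cap I| \leq a$ for every $F \in \mathcal{F}$. Indeed, $I$ is an independent set in $G$, so $F \cap I$ is an independent set contained in $F$, and hence $|F \cap I| \leq \alpha(F) \leq a$ by assumption. Plugging this into the previous inequality yields
\[
\alpha(S) \;=\; |I| \;\leq\; a \sum_{F \in \mathcal{F}} x_F \;=\; a \cdot \mathrm{fcov}_{\mathcal{F}}(S),
\]
which is the desired bound. There is no real obstacle here: the only subtlety is checking that the inequality $|F \cap I| \leq \alpha(F)$ uses independence of $I$ in all of $G$ (not just in $G[S]$), which is automatic.
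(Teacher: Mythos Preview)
Your proof is correct and essentially identical to the paper's: both fix a maximum independent set $I$ of $G[S]$ and an optimal fractional cover $\{x_F\}$, then double-count $\sum_{v \in I}\sum_{F \ni v} x_F$, using $|F \cap I| \leq \alpha(F) \leq a$ on one side and the covering constraint $\sum_{F \ni v} x_F \geq 1$ on the other.
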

\begin{proof}
    Let $I$ be an independent set in $G[S]$ such that $\alpha(S) = |I|$. Let $\{x_F\}_{F \in \mathcal{F}}$ be a fractional cover of $S$, satisfying $\sum_{F \in \cal F} x_F = \mathrm{fcov}_{\cal F}(S)$. We have that,
    $$
    a \cdot \sum_{F \in \cal F} x_F \quad
    \geq \quad a \cdot \sum_{F \in {\cal F}} \sum_{v \in F \cap I} \frac{x_F}{a} \quad
    = \quad \sum_{v \in I} \sum_{\substack{F \in {\cal F} \\ F \ni v}} x_F \quad
    \geq \quad \sum_{v \in I} 1 \quad
    \geq \quad |I|,
    $$
    where the first inequality uses the fact that $\alpha(F) \leq a$ for all $F \in \mathcal{F}$. Hence, we conclude that $\alpha(S) \leq a \cdot \mathrm{fcov}_{\mathcal{F}}(S)$.
\end{proof}

\MainThmSeparator*



\begin{proof}
    We may assume that $|V(G)| \geq 3$ for otherwise the conclusion of the theorem holds taking $X=\emptyset$.
    Let $t$ be the smallest positive integer such that $t > (2 \log(2t))^c$. Consider the complete bipartite graph $K_{t,t}$ on $2t$ vertices, and let $u, v$ be two vertices on the same side of the bipartition in $K_{t,t}$. Observe that $u$ and $v$ are non-adjacent. Furthermore, any vertex subset $X \subseteq V(K_{t,t}) \setminus \{u, v\}$ that separates $u$ from $v$ has size at least $t$. Finally note that $\omega(K_{t,t}) = 2$.  Therefore, by the definition of $\mathcal{C}$ and the choice of $t$, it follows that $\mathcal{C}$ does not contain $K_{t,t}$. Let $G \in \mathcal{C}$. Since $\mathcal{C}$ is hereditary, $G$ is $\overline{2K_t}$-free. Therefore, by Lemma~\ref{lem:containers1}, $G$ has a $(1,a)$-container family $\mathcal{F}$ of size at most ${(|V(G)|+1)}^{(2t \log |V(G)| + 3)^4}$ for $a \leq (2t \log |V(G)| + 3)^4$.

    Let $u,v\in V(G)$ be an arbitrary pair of non-adjacent vertices. Define $A := N(u)$, $B := N(v)$, $G' := G - \{u, v\}$, and $f := 6 \cdot \lceil (12 \cdot a \cdot {(\lceil \log 2|\mathcal{F}| \rceil + 3)} \cdot \log|V(G)|)^c + 1 \rceil$. Note that since $u$ and $v$ are non-adjacent, we have $A, B \subseteq V(G')$. We apply Theorem~\ref{thm:mengers_with_F} to the tuple $(G', A, B, f, a, \mathcal{F})$ with the parameter $\ell = \lceil \log 2|\mathcal{F}| \rceil$, which leads to two possible cases.

    Suppose there exists a multiset $\mathcal{Q}$ of induced $A$--$B$ paths in $G'$ of cardinality at least $f \cdot \ell$, such that for every $F \in \mathcal{F}$, the number of paths in $\mathcal{Q}$ that have a non-empty intersection with $F$ is at most ${ 6(\ell+1)}$. Then let $H := G[\{u,v\}\cup\bigcup_{P\in\mathcal{Q}}V(P)]$ and note that since $u$ and $v$ are non-adjacent, $H$ has at least three vertices. As $\mathcal{F}$ is a $(1,a)$-container family, every clique in $G$, and consequently in the induced subgraph $H$, is contained in some $F \in \mathcal{F}$. Furthermore, since the paths in $\mathcal{Q}$ are induced and each $F \in \mathcal{F}$ has independence number at most $a$, we have $|V(P) \cap F| \leq 2a$ for every $P \in \mathcal{Q}$ and $F \in \mathcal{F}$. It follows that $\omega(H) \leq \max_{F\in\mathcal{F}}\{|F\cap V(H)|\} \leq 12\cdot a\cdot { (\ell+1) + 2}$.
    As $\mathcal{C}$ is hereditary and $H$ is an induced subgraph of $G$, $H$ belongs to the family $\mathcal{C}$. Also, because $u$ and $v$ are non-adjacent in $H$, there is  a set $X \subseteq V(H)$ disjoint from $\{u,v\}$ with $|X| \leq (12 \cdot a \cdot { (\ell+3)} \cdot \log |V(H)|)^c$ that separates $u$ and $v$ in $H$. But, as $|X|\geq \mathrm{cov}_{\mathcal{F}}(X)$, we have a subfamily $\mathcal{F}'\subseteq\mathcal{F}$ of cardinality at most $(12 \cdot a \cdot { (\ell+3)} \cdot \log |V(H)|)^c < \frac{f}{6}$ such that $\bigcup_{F\in\mathcal{F}'}F$ separates $u$ from $v$ in $H$. However, this leads to a contradiction as $|\mathcal{Q}| \geq f\cdot\ell$ and as the number of paths in $\mathcal{Q}$ that intersect $F$ is at most $6  { (\ell+1)}$ for every $F \in \mathcal{F}$, which implies that $|\mathcal{F}'|$ is at least $\frac{f}{6}$.
    

    Therefore, $G'$  has an $A$--$B$ separator $X$ with $\mathrm{fcov}_{\mathcal{F}}(X) \leq 12 \cdot a \cdot f \cdot \log(2|V(G')|)$. But, $X$ is a set disjoint from $\{u, v\}$ that separates $u$ from $v$ in $G$ and, by Lemma~\ref{lem:fractional_cover_independence_bound}, satisfies $\alpha(X) \leq 12 \cdot a^2 \cdot f \cdot \log(2|V(G')|)$. Hence, we define $d(c)$ to be the smallest positive integer such that
    $$\log^{d(c)} (n) \geq 12 \cdot (2t \log n + 3)^8 \cdot 6 \cdot \lceil (12 \cdot (2t \log n + 3)^4 \cdot { \lceil 4+ (2t \log n + 3)^4\cdot\log (n+1) \rceil} \cdot \log n)^c + 1 \rceil \cdot \log 2n,$$ 
    holds for every positive integer $n$, which completes the proof of the theorem.
\end{proof}

In order to prove Theorem~\ref{thm:tw_and_talpha}, we need the following lemma. We remark that the proof of Lemma~\ref{lem:tree_alpha_separator} follows closely that of Lemma 7.1 in \cite{TI2}.

\begin{lemma}\label{lem:tree_alpha_separator}
    Let $G$ be a graph and let $a$ be a positive integer. If for every independent set $I \subseteq V(G)$ of size at least $a$, there exist disjoint subsets $I_1, I_2 \subseteq I$ and an $I_1$--$I_2$ separator $S$ such that $S \cap (I_1 \cup I_2) = \emptyset$ and $\alpha(S) \leq \min\{|I_1|, |I_2|\}$, then $\atw(G) \leq \frac{3}{2} \cdot a$.
\end{lemma}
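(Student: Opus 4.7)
The plan is to prove the lemma by a Robertson--Seymour style recursive construction of a tree decomposition, as sketched in \cite{RobertsonS95b} and adapted to tree-independence in the manner of Lemma~7.1 of \cite{TI2}. Concretely, I would prove the following strengthened statement by induction on $|W|$: for every $W \subseteq V(G)$ and every $Z \subseteq W$ with $\alpha(Z) \leq a$, the induced subgraph $G[W]$ admits a tree decomposition $(T,\chi)$ in which some bag contains $Z$ and every bag $\chi(t)$ satisfies $\alpha(G[\chi(t)]) \leq \tfrac{3}{2}a$. Taking $W = V(G)$ and $Z = \emptyset$ then gives $\atw(G) \leq \tfrac{3}{2}a$.

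In the base case, if $\alpha(W) \leq \tfrac{3}{2}a$, the tree decomposition consisting of the single bag $W$ suffices. In the inductive step, $\alpha(W) > \tfrac{3}{2}a \geq a$, so one can select an independent set $I \subseteq W$ with $|I| = a$. Applying the hypothesis of the lemma to $I$ in $G$ produces disjoint subsets $I_1, I_2 \subseteq I$ and an $I_1$--$I_2$ separator $S \subseteq V(G) \setminus (I_1 \cup I_2)$ with $\alpha(S) \leq \min\{|I_1|,|I_2|\} \leq a/2$. The restriction $S' := S \cap W$ remains an $I_1$--$I_2$ separator in $G[W]$ (any $I_1$--$I_2$ path in $G[W]$ is an $I_1$--$I_2$ path in $G$, hence meets $S$, and every such intersection lies in $W$), with $\alpha(S') \leq \alpha(S) \leq a/2$. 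Take the root bag $B_0 := Z \cup S'$; subadditivity of $\alpha$ gives $\alpha(B_0) \leq \alpha(Z) + \alpha(S') \leq a + a/2 = \tfrac{3}{2}a$, matching the target.

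Next I would recurse on each connected component $D$ of $G[W \setminus B_0]$, with sub-problem $(W_D, Z_D)$, where $W_D := D \cup N_{G[W]}(D)$ and $Z_D := N_{G[W]}(D) \subseteq B_0$. Each $W_D$ is a proper subset of $W$ since $D \cap B_0 = \emptyset$ and $B_0 \neq \emptyset$ (because $I_1 \neq \emptyset$ forces $S'$ to be non-empty in the non-trivial case). The recursively obtained tree decompositions would be assembled by introducing a node carrying $B_0$ and making it adjacent to the node of each sub-decomposition whose bag contains $Z_D$; the inclusion $Z_D \subseteq B_0$ makes this a valid tree decomposition of $G[W]$.

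The principal obstacle, and the place where the argument must be engineered most carefully, is to verify $\alpha(Z_D) \leq a$ so that the inductive hypothesis applies to the sub-problems: the naive bound gives only $\alpha(Z_D) \leq \alpha(B_0) \leq \tfrac{3}{2}a$, which is too weak. To close this gap I plan to exploit the separator structure, noting that each $D$ lies in a single component of $G[W \setminus S']$ and is anticomplete in $G[W]$ to at least one of $I_1, I_2$, yielding a ``lost'' contribution to any independent set in $N_{G[W]}(D)$. I would choose $I$ coordinated with $Z$ (for instance, extending a maximum independent subset of $Z$) and, if necessary, strengthen the inductive statement with an extra invariant --- a distinguished independent subset $I^\star \subseteq Z$ that subsequent separators are required to avoid --- so that the sub-boundaries $Z_D$ never carry an independent set of size exceeding $a$. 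Making this bookkeeping tight is the main technical step we expect.
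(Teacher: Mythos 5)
Your skeleton is the same as the paper's: a Robertson--Seymour-style recursion with root bag $Z \cup S$ and sub-problems on the components, exactly as in Lemma~7.1 of \cite{TI2}. You have also correctly located the crux: the naive bound on the independence number of the sub-problem boundary is $\tfrac{3}{2}a$, and the whole lemma hinges on improving it to $a$. But you leave that step open (``Making this bookkeeping tight is the main technical step we expect''), and the two fixes you float do not close it. The invariant ``a distinguished independent subset $I^\star \subseteq Z$ that subsequent separators are required to avoid'' is not available: the hypothesis of the lemma hands you \emph{some} separator for the independent set you choose, and you have no control over which vertices it uses beyond $S \cap (I_1 \cup I_2) = \emptyset$. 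The coordination idea ``extend a maximum independent subset of $Z$ to an independent set $I \subseteq W$ of size $a$'' also fails on two counts: such an extension need not exist (a maximum independent set of $Z$ can be dominating in $W$), and even when it does, the contradiction set $J \cup I_1$ you would build (with $J$ an independent subset of $Z_D \cap Z$ anticomplete to $I_1$) lives in $W$ rather than in $Z$, so having $|J| + |I_1| > a$ contradicts nothing, since $\alpha(W)$ may be far larger than $a$.

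The paper's resolution is to strengthen the inductive statement so that $\alpha(Z) = a$ \emph{exactly} (one pads $Z$ with vertices of the current graph before each recursive call; this is always possible once the current graph has independence number at least $a$, by adding vertices one at a time), and then to take $I$ to be a maximum independent set \emph{of $Z$ itself}, so that $I_1, I_2 \subseteq Z$. Now fix a component $D$; its boundary splits as $(N(D) \cap Z) \cup (N(D) \cap S)$, and $N(D) \cap Z$ minus $S$ lies in a single component of the graph minus $S$, hence is disjoint from and anticomplete to one of $I_1, I_2$, say $I_1$. Any independent $J \subseteq N(D) \cap Z$ avoiding $S$ then yields the independent set $J \cup I_1 \subseteq Z$, so $|J| \leq \alpha(Z) - |I_1| = a - |I_1|$; together with $\alpha(S) \leq \min\{|I_1|,|I_2|\} \leq |I_1|$ this gives $\alpha(Z_D) \leq (a - |I_1|) + |I_1| = a$, which is exactly the bound you need to recurse. (The root bag still satisfies $\alpha(Z \cup S) \leq a + \min\{|I_1|,|I_2|\} \leq \tfrac{3}{2}a$.) So the missing ingredient is precisely the combination of ``$\alpha(Z) = a$ exactly, with $I$ maximum \emph{inside} $Z$'' --- without it the maximality argument has nothing to bite on, and with it the rest of your construction goes through.
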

\begin{proof}
    We will prove that for every set $Z \subseteq V(G)$ with $\alpha(Z) = a$ there is a tree decomposition $(T,\chi)$ of $G$ such that $\alpha(\chi(t)) \leq \frac{3}{2} \cdot a$ for every $t \in T$, and that there exists $t \in T$ such that $Z \subseteq \chi(t)$. The proof is by induction on $|V(G)|$. Observe that every induced subgraph of $G$ satisfies the assumption of the lemma.
  
    Let $Z \subseteq V(G)$ with $\alpha(Z) = a$. Let $I$ be an independent set of $Z$ with $|I|=\alpha(Z)$. Let $I_1$, $I_2\subseteq I$ be disjoint subsets of $I$ and let $S$ be an $I_1$--$I_2$ separator satisfying $S\cap (I_1\cup I_2) = \emptyset$ and $\alpha(S)\leq \min\{|I_1|, |I_2|\}$. Let $C_1,\dots, C_r$ be the connected components of $G-S$ and define $Z_i := Z\cap V(C_i)$ for every $i\in [r]$. Consider $Z_i$ for some $i\in [r]$ and observe that as $S$ is an $I_1$--$I_2$ separator, $Z_i$ is disjoint and anti-complete from either $I_1$ or $I_2$. Without loss of generality, assume $Z_i$ is disjoint and anti-complete from $I_1$. 
    Let $I'_i$ be an independent set in $Z_i$ such that $|I'_i| = \alpha(Z_i)$. If $|I'_i| > |I|-|I_1|$, then $I'_i\cup I_1$ will be an independent set in $Z$ of size strictly greater than that of $I$, which is a contradiction. Combining this with $|I|\geq |I_1|+|I_2|$, implies that $\alpha(Z_i\cup S) \leq |I|-|I_1| + \min\{|I_1|, |I_2|\} \leq |I| = a$. If $\alpha(C_i) < a$ then let $(T_i,\chi_i)$ be the trivial tree decomposition of $C_i$ with $T_i$ having a single node $t_i$ and $\chi(t_i) := V(C_i)$. 
    Otherwise, let $Z'_i\subseteq V(C_i)\cup S$ be such that $Z_i\cup S\subseteq Z'_i$ and $\alpha(Z'_i) = a$ and let $(T_i,\chi_i)$ be the tree decomposition of $C_i$ obtained by applying our inductive assumption, with $Z'_i\subseteq\chi_i(t_i)$ for some $t_i\in V(T_i)$. Now, let $T$ be the tree obtained from the disjoint union of $T_1, \dots, T_r$ by adding a new vertex $t_0$ which is only adjacent to $t_1, \dots, t_r$. Define $\chi(t)=\chi_i(t)$ for every $t \in T_i$, and let $\chi(t_0) = Z \cup S$. It can be verified that $(T,\chi)$ is a tree decomposition of $G$. Since $\alpha(Z\cup S) \leq a + \min\{|I_1|,|I_2|\} \leq \frac{3}{2} \cdot a$ and since $Z\subseteq \chi(t_0)$, we have that $(T,\chi)$ satisfies the conclusion of the lemma.
\end{proof}

\begin{theorem}\label{thm:tw_and_talpha}
    Let $G$ be a graph, $f$, $a$, and $b$ be positive integers with $a \geq 2$, and let $\mathcal{F}$ be a $(b, a)$-container family in $G$. If $\atw(G) > 1020000 \cdot \lceil \log 2n \cdot a^3 \cdot {\log(a\cdot f + 4)} \cdot f \rceil$, then there exists an induced subgraph $G' \subseteq G$ and an independent set $I \subseteq V(G')$ of size $680000 \cdot \lceil \log 2n \cdot a^3 \cdot {\log(a\cdot f + 4)} \cdot f \rceil$ that satisfies the following properties:
    \begin{itemize}
        \item Every induced subgraph $H$ of $G'$ with $\alpha(H)\leq b$ satisfies
        $$|V(H)| \quad \leq \quad {42} \cdot b \cdot (\lceil\log 4|\mathcal{F}|\rceil + 680000 \cdot \lceil \log 2n \cdot a^3 \cdot {\log(a\cdot f + 4)}\rceil).$$
        \item For every $(I,\frac{1}{2})$-balanced separator $S$ in $H$, we have that $\mathrm{cov}_{\mathcal{F}}(S) \geq f$.
    \end{itemize}
\end{theorem}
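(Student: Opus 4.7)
The plan is to instantiate the sampling blueprint of the introduction by combining Lemma~\ref{lem:tree_alpha_separator}, Theorem~\ref{thm:balanced_separator_rounding} (primal rounding), and Theorem~\ref{thm:balanced_separator_dual_rounding} (dual sampling). Set $k := 680000 \lceil \log 2n \cdot a^3 \log(fa) \cdot f \rceil$, so that the hypothesis becomes $\atw(G) > \frac{3}{2} k$. By the contrapositive of Lemma~\ref{lem:tree_alpha_separator}, after shrinking if necessary, there is an independent set $I \subseteq V(G)$ with $|I| = k$ such that for every pair of disjoint $I_1, I_2 \subseteq I$ and every $I_1$--$I_2$ separator $S$ disjoint from $I_1 \cup I_2$, we have $\alpha(S) > \min(|I_1|, |I_2|)$. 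The first step is to upgrade this to: every $(I, \frac{95}{100})$-balanced separator $S$ of $G$ satisfies $\alpha(S) > k/40$. Indeed, if $\alpha(S) \leq k/40$ then $|I \cap S| \leq k/40$, so the components of $G-S$ partition $I \setminus S$ (of size at least $\frac{39 k}{40}$) into parts each of size at most $\frac{95 k}{100}$; a short greedy merging argument then produces disjoint $I_1, I_2 \subseteq I \setminus S$ with $|I_1|, |I_2| \geq k/40$, and the property of $I$ yields a contradiction.

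Next, consider the balanced separator LP~\ref{lp:container_cover} on $(G, I, \mathcal{F})$ and let $\lpopt$ denote its optimum value. Suppose for contradiction that $\lpopt < f$. Applying Theorem~\ref{thm:balanced_separator_rounding} yields an $(I, \frac{95}{100})$-balanced separator $S$ with $\mathrm{fcov}_{\mathcal{F}}(S) \leq 17000 \log 2n \cdot a^2 \log(a \cdot \lpopt) \cdot \lpopt$, and Lemma~\ref{lem:fractional_cover_independence_bound} then gives $\alpha(S) \leq a \cdot \mathrm{fcov}_{\mathcal{F}}(S) < 17000 \log 2n \cdot a^3 \log(fa) \cdot f = k/40$, contradicting the previous paragraph. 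Therefore $\lpopt \geq f$.

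With $\lpopt \geq f$ established, I would apply Theorem~\ref{thm:balanced_separator_dual_rounding} to $(G, I, \mathcal{F})$ with $\ell := \lceil 7(\lpopt \log 4|\mathcal{F}| + |I|) \rceil$, obtaining an induced subgraph $H \subseteq G$ containing $I$ such that every induced subgraph of $H$ of independence number at most $b$ has at most $\frac{3 b \ell}{\lpopt}$ vertices and every $(I, \frac{1}{2})$-balanced separator of $H$ has cover number at least $\lpopt \geq f$. Since $\lpopt \geq f$, the bound $\frac{3 b \ell}{\lpopt} \leq 21 b (\lceil \log 4|\mathcal{F}| \rceil + |I|/f)$ matches the statement of the theorem after substituting $|I|$, and setting $G' := H$ finishes the proof. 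The main delicate point is the constant-bookkeeping across the two rounding theorems and Lemma~\ref{lem:tree_alpha_separator}: the values $40$, $17000$, and $680000 = 40 \cdot 17000$ in the theorem must line up so that the small-$\lpopt$ case yields a strict contradiction with the $\alpha(S) > k/40$ bound, and the choice of $\ell$ is exactly what is needed to translate the dual rounding output into the promised bound on $|V(H)|$.
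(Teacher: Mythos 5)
Your proposal is correct and follows essentially the same route as the paper: it obtains the independent set $I$ with no balanced separator of small independence number via Lemma~\ref{lem:tree_alpha_separator} together with the same component-merging argument (the paper packages this as Claim~7.5 and phrases the obstruction in terms of $\mathrm{fcov}_{\mathcal{F}}$ rather than $\alpha$, but these are interchangeable via Lemma~\ref{lem:fractional_cover_independence_bound}), then deduces $\lpopt \geq f$ from Theorem~\ref{thm:balanced_separator_rounding}, and finally applies Theorem~\ref{thm:balanced_separator_dual_rounding} with the same choice of $\ell$. The constant bookkeeping you flag lines up exactly as in the paper ($680000 = 40\cdot 17000$), so no further changes are needed.
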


\begin{proof}
    We begin with the following claim.
    \begin{claim}\label{claim:balanced_sep_of_I}
        There exists an independent set $I \subseteq V(G)$ in $G$, of size $680000 \cdot \lceil \log 2n \cdot a^3 \cdot {\log(a\cdot f + 4)} \cdot f \rceil$ for which no $(I, \frac{95}{100})$-balanced separator $S$ in $G$ satisfies $\mathrm{fcov}_{\mathcal{F}}(S) \leq 17000 \cdot \log 2n \cdot a^2 \cdot {\log(a\cdot f + 4)} \cdot f$.
    \end{claim}
    \begin{proof}
        Assume for contradiction that, for every independent set $I \subseteq V(G)$ such that $|I| = 680000 \cdot \lceil \log 2n \cdot a^3 \cdot {\log(a\cdot f + 4)} \cdot f \rceil$, there exists an $(I, \frac{95}{100})$-balanced separator $S$ in $G$ with $\mathrm{fcov}_{\mathcal{F}}(S) \leq 17000 \cdot \log 2n \cdot a^2 \cdot {\log(a\cdot f + 4)} \cdot f$. Fix such an independent set $I$ and the corresponding separator $S \subseteq V(G)$. Observe that, by Lemma~\ref{lem:fractional_cover_independence_bound}, we have $\alpha(S)\ \leq\ a \cdot \mathrm{fcov}_{\mathcal{F}}(S)\ \leq\ 17000 \cdot \log 2n \cdot a^3 \cdot {\log(a\cdot f + 4)} \cdot\ f \leq\ \frac{1}{40}|I|$. We will show that there exist disjoint subsets $I_1, I_2 \subseteq I$ such that $S$ is an $I_1$--$I_2$ separator with $S \cap (I_1 \cup I_2) = \emptyset$ and $\min\{|I_1|, |I_2|\} \geq \frac{1}{40} |I|$. This suffices to prove the claim via Lemma~\ref{lem:tree_alpha_separator}, since $\atw(G) > 1020000 \cdot \lceil \log 2n \cdot a^3 \cdot {\log(a\cdot f + 4)} \cdot f \rceil = \frac{3}{2}|I|$.
        
        To this end, let $C_1,\dots, C_r$ be components of $G-S$ that intersect $I$ and define $I'_i := C_i\cap I$ for every $i\in [r]$. By relabeling the sets if necessary, assume that $|I'_i| \geq |I'_{i+1}|$ for all $i \in [r]$. If $|I'_1| \geq \frac{|I|}{2}$, then define $I_1 := I'_1$ and $I_2 = I\setminus (I_1\cup S)$. Here, we have $|I_2| = |I| - |I_1| - |S\cap I| \geq |I| - \frac{95}{100}|I| - \frac{1}{40}|I| = \frac{1}{40}|I|$. Otherwise, let $q$ be the smallest integer in $[r]$ such that $\sum_{i=1}^{q}|I'_i| \geq \frac{|I|}{3}$. We claim that $\sum_{i=1}^{q}|I'_i| \leq \frac{2|I|}{3}$. If $q = 1$, the claim holds trivially. Else, observe that
        $$
        \sum_{i=1}^{q}|I'_i| \quad
        = \quad |I'_{q}|\ +\ \sum_{i=1}^{q-1}|I'_i| \quad
        \leq \quad |I'_{q-1}|\ +\ \sum_{i=1}^{q-1}|I'_i| \quad
        \leq \quad 2\sum_{i=1}^{q-1}|I'_i| \quad
        \leq \quad \frac{2|I|}{3}.
        $$
        Define $I_1 := \bigcup_{i\in[q]} I'_i$ and $I_2 := I\setminus (I_1\cup S)$. Since the sets in $\{I'_i\}_{i\in [r]}$ are pairwise disjoint, we have that $|I_1| = \sum_{i=1}^{q}|I'_i|$. Hence, $|I_1| \geq \frac{|I|}{3}$ and since $|I_1| \leq \frac{2|I|}{3}$, we have $|I_2| \geq |I| - |I_1| - |S\cap I| \geq |I| - \frac{2}{3}|I| - \frac{1}{40}|I| \geq \frac{1}{40}|I|$. Hence, in either case, we obtain disjoint subsets $I_1$, $I_2$ of $I$ such that $S$ is an $I_1$--$I_2$ separator and $S \cap (I_1 \cup I_2) = \emptyset$ and $\min\{ |I_1|, |I_2|\} \geq \frac{1}{40}|I|$. This concludes the proof of the claim.
    \end{proof}
    
    Let $I$ be an independent set in $G$ whose existence is guaranteed by Claim~\ref{claim:balanced_sep_of_I}. Namely, $I$ has size $680000 \cdot \lceil \log 2n \cdot a^3 \cdot {\log(a\cdot f + 4)} \cdot f \rceil$, and every $(I, \frac{95}{100})$-balanced separator $S$ in $G$ satisfies $\mathrm{fcov}_{\mathcal{F}}(S) > 17000 \cdot \log 2n \cdot a^2 \cdot {\log(a\cdot f + 4)} \cdot f$. {Observe that the balanced separator linear program for the instance $(G, I, \mathcal{F})$, is feasible and has a bounded solution since $(G, I, \mathcal{F})$ meets the conditions of Observation~\ref{obs:lp_is_feasible}. Indeed since for every vertex $v\in V(G)$, $\{v\}$ induces a subgraph of independence number $1$, which implies that there exists a set in $\mathcal{F}$ that contains $v$. Furthermore the objective value of any optimal solution must exceed $f$ since otherwise,} by Theorem~\ref{thm:balanced_separator_rounding}, the set $I$ would admit a balanced separator $S$ with $\mathrm{fcov}_{\mathcal{F}}(S) \leq 17000 \cdot \log 2n \cdot a^2 \cdot {\log(a\cdot f + 4)} \cdot f$, contradicting the definition of $I$. 

    By strong duality, the dual linear program described in Section~\ref{sec:dual_of_balanced_separator} also has an optimal objective value, denoted by $f'$, that is at least $f$. Now, applying Theorem~\ref{thm:balanced_separator_dual_rounding} to the instance $(G, I, \mathcal{F})$, with $\ell = 7 \cdot (\lceil f' \cdot \log 4|\mathcal{F}|\rceil + 680000 \cdot \lceil \log 2n \cdot a^3 \cdot {\log(a\cdot f + 4)} \cdot f \rceil)$ guarantees the existence of a subgraph $G' \subseteq G$ with $I \subseteq V(G')$ such that the following holds: every induced subgraph $H$ of $G'$ with $\alpha(H) \leq b$ satisfies
    \begin{align*}
    |V(H)| \quad
    &\leq\quad 3 \cdot b \cdot \frac{\ell}{f'} \\
    &\leq\quad {21} \cdot b \cdot \frac{1}{f'}\cdot (\lceil f' \rceil\cdot \lceil \log 4|\mathcal{F}| \rceil + 680000 \cdot \lceil \log 2n \cdot a^3 \cdot {\log(a\cdot f + 4)} \rceil\cdot \lceil f \rceil)\\
    &\leq\quad {42} \cdot b \cdot (\lceil \log 4|\mathcal{F}| \rceil + 680000 \cdot \lceil \log 2n \cdot a^3 \cdot {\log(a\cdot f + 4)} \rceil),
    \end{align*}
    and for every $(I, \frac{1}{2})$-balanced separator $S$ in $G'$, we have $\mathrm{cov}_{\mathcal{F}}(S) \geq f$. This concludes the proof of the theorem.
    
\end{proof}

For the proof of Theorem~\ref{thm:main}, we also need the following propositions. 

\begin{proposition}[\cite{erdos1989ramsey}]\label{prop:erdos_hajnal}
    For every positive integer $t$ there exist positive real numbers $c(t)$ and $\varepsilon(t)$ such that every $K_{t,t}$-free graph $G$ on $n$ vertices satisfies $\omega(G) \geq c(t)\cdot n^{\varepsilon(t)}$ or $\alpha(G) \geq c(t)\cdot n^{\varepsilon(t)}$.
\end{proposition}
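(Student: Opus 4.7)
The plan is to establish the stated ``or'' by proving only the independence-number side, which in fact already holds for every $K_{t,t}$-free graph. First, I would invoke the Kővári–Sós–Turán theorem: for every positive integer $t$, there is a constant $C(t)$ such that every $K_{t,t}$-free graph $G$ on $n$ vertices has at most $C(t) \cdot n^{2 - 1/t}$ edges, and therefore average degree $\bar d(G) \leq 2C(t) \cdot n^{1-1/t}$.

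Next I would apply the Caro–Wei bound $\alpha(G) \geq \sum_{v \in V(G)} 1/(d(v)+1)$, together with Jensen's inequality for the convex function $x \mapsto 1/(x+1)$, to obtain
\[
\alpha(G) \ \geq \ \frac{n}{\bar d(G) + 1} \ \geq \ \frac{n}{2C(t) \cdot n^{1 - 1/t} + 1} \ \geq \ c(t) \cdot n^{1/t}
\]
for a suitable constant $c(t) > 0$ depending only on $t$. Setting $\varepsilon(t) := 1/t$, this yields an independent set of size at least $c(t) \cdot n^{\varepsilon(t)}$ in any $K_{t,t}$-free graph $G$ on $n$ vertices, which satisfies the conclusion regardless of the value of $\omega(G)$.

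The only non-elementary ingredient is the Kővári–Sós–Turán estimate, whose standard proof is a double-counting argument: one counts pairs $(T, v)$ with $T \in \binom{V(G)}{t}$ and $v$ a common neighbor of $T$, applying Jensen's inequality to $\binom{x}{t}$ to bound this quantity below in terms of the number of edges; exceeding $(t-1)\binom{n}{t}$ would force some $t$-set to share $t$ common neighbors and produce a $K_{t,t}$. I do not expect any real obstacle in this plan, since Kővári–Sós–Turán and Caro–Wei are classical and the intermediate algebra is routine; one can, if desired, sharpen the exponent or constant, but $\varepsilon(t) = 1/t$ already suffices for the statement.
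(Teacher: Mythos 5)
Your argument proves a different statement from the one in the proposition. Throughout this paper, ``$K_{t,t}$-free'' means that $G$ contains no \emph{induced} copy of $K_{t,t}=\overline{2K_t}$ (the class $\mathcal{C}$ is hereditary and excludes $K_{t,t}$ as a member, hence every $G\in\mathcal{C}$ excludes it as an induced subgraph), whereas the K\H{o}v\'ari--S\'os--Tur\'an theorem bounds the number of edges only in graphs with no $K_{t,t}$ \emph{subgraph}. These hypotheses are far apart: for $t\geq 2$ the complete graph $K_n$ contains no induced $K_{t,t}$ (an induced $K_{t,t}$ has non-adjacent pairs), yet it has $\binom{n}{2}$ edges and $\alpha(K_n)=1$. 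So your strengthened claim --- that the independence-number alternative alone always holds --- is false for the graphs the proposition is actually about, and the disjunction with $\omega(G)\geq c(t)\cdot n^{\varepsilon(t)}$ cannot be dropped. The clique alternative is also exactly what the paper needs: in the proof of Theorem~\ref{thm:main} the proposition is applied to bags $G[\chi(v)]$, which are only known to be induced-$K_{t,t}$-free and to have small independence number, and the conclusion drawn is a bound on $|\chi(v)|$ in terms of $\omega$ and $\alpha$ together.

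There is no quick repair of your route. The natural reduction to the subgraph-free case --- assume $\omega(G)<k$, take a non-induced $K_{T,T}$, and extract independent $t$-subsets from each side by Ramsey to produce an induced $K_{t,t}$ --- requires $T\geq R(t,k)$; since $k$ must itself be taken of order $n^{\varepsilon}$, the exponent $1/T$ in the K\H{o}v\'ari--S\'os--Tur\'an bound then degenerates as $n$ grows. The statement is a genuine Erd\H{o}s--Hajnal-type result for excluding $K_{t,t}$ as an induced subgraph, which is why the paper cites Theorem 1.2 of \cite{erdos1989ramsey} rather than proving it; a self-contained proof needs a density-increment/iterated Ramsey argument (or the closure of the Erd\H{o}s--Hajnal property under substitution, applied to $\overline{2K_t}$), not an edge-counting bound plus Caro--Wei.
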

\begin{proposition}[\cite{cygan2015parameterized}]\label{prop:tree_width_separator}
    Let $G$ be a graph, and let $k$ be a positive integer. If $\tw(G)\leq k$, then for every set $Z \subseteq V(G)$, there exists a $(Z,\frac{1}{2})$-balanced separator $S$ such that $|S|\leq k$.
\end{proposition}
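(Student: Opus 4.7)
The plan is to apply the classical tree-decomposition centroid argument. I will fix a tree decomposition $(T,\chi)$ of $G$ witnessing $\tw(G)\le k$, so $|\chi(t)|\le k+1$ for every $t\in V(T)$, and exhibit a single bag $\chi(t^{\ast})$ that is a $(Z,\tfrac{1}{2})$-balanced separator. The standard form of this argument produces $|S|\le k+1$; the statement's bound $|S|\le k$ matches the precise form as cited from \cite{cygan2015parameterized} and can be reconciled with the usual additive-one convention.

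For each $z\in Z$, fix an arbitrary ``home'' node $t_{z}\in V(T)$ with $z\in\chi(t_{z})$. For each edge $e=uv$ of $T$, removing $e$ splits $T$ into subtrees $T_{u}\ni u$ and $T_{v}\ni v$; I will orient $e$ towards whichever side contains strictly more than $|Z|/2$ of the homes $\{t_{z}:z\in Z\}$, breaking ties arbitrarily. Since $T$ is a tree, the orientation is acyclic, so there must exist a sink $t^{\ast}\in V(T)$, i.e.\ a node with no outgoing edges. Set $S:=\chi(t^{\ast})$.

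To verify that $S$ is the required balanced separator, I would consider an arbitrary connected component $C$ of $G-S$. By the connectivity axiom of tree decompositions, the set of nodes whose bags meet $C$ must lie in a single connected component $T_{C}$ of $T-t^{\ast}$. Moreover, for each $z\in Z\cap C$, the home $t_{z}$ itself must lie in $T_{C}$: otherwise the unique $t$-to-$t_{z}$ path in $T$, for any $t\in T_{C}$ with $z\in\chi(t)$, passes through $t^{\ast}$, forcing $z\in\chi(t^{\ast})=S$ and contradicting $z\in C$. Finally, since $t^{\ast}$ is a sink, the edge joining $t^{\ast}$ to its neighbor in $T_{C}$ is oriented towards $t^{\ast}$, so by the orientation rule the $T_{C}$-side contains at most $|Z|/2$ homes, giving $|Z\cap C|\le|Z|/2$. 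The only step requiring real care is the connectivity-axiom bookkeeping that simultaneously forces both $C$ and the homes of $Z\cap C$ into a single subtree $T_{C}$; beyond this, there is no genuine combinatorial obstacle.
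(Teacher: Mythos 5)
Your proof is correct and is exactly the standard centroid/orientation argument for this fact; the paper does not prove this proposition but simply cites it from the textbook of Cygan et al., where the same argument appears, so there is nothing to compare beyond noting that your route is the canonical one. One remark: as you observe, the argument genuinely yields $|S|=|\chi(t^{\ast})|\le k+1$ rather than $\le k$, so the discrepancy lies in the paper's statement (an off-by-one relative to the cited lemma), not in your proof; this is harmless for the paper's application, which only uses the bound up to an additive constant. Your tie-breaking convention is also fine, since a tie can only occur when both sides of an edge carry exactly $|Z|/2$ homes, so every component of $T-t^{\ast}$ still receives at most $|Z|/2$ homes.
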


We remark that Proposition~\ref{prop:erdos_hajnal} is not explicitly stated as such by Erd\H{o}s and Hajnal~\cite{erdos1989ramsey}, but is an immediate implication of Theorem 1.2 in~\cite{erdos1989ramsey}.

\MainThmTreeAlpha*



\begin{proof}

    {\em (i) $\implies$ (ii)} \quad Trivially true.\\[-8pt]
    
    {\em (ii) $\implies$ (iii)} \quad Let $t$ be the smallest integer such that $t > (2 \log(2t))^{c_2}$. Consider the complete bipartite graph $K_{t,t}$ on $2t$ vertices. Since any tree decomposition $(T,\chi)$ of $K_{t,t}$ has a node $t\in V(T)$ such that $\chi(t)$ contains the closed neighborhood of some vertex, we have $\tw(G)\geq t$. Furthermore, $\omega(K_{t,t}) = 2$. Therefore, by the definition of $\mathcal{C}$ and the choice of $t$, it follows that $\mathcal{C}$ does not contain $K_{t,t}$. 
    
    Let $G \in \mathcal{C}$. Since $\mathcal{C}$ is hereditary, $G$ is  $K_{t,t}$-free (equivalently $\overline{2K_t}$-free). Let $(T, \chi)$ be a tree decomposition of $G$ such that $\alpha(\chi(v)) \leq \atw(G)$ for all $v \in V(T)$. Applying Proposition~\ref{prop:erdos_hajnal} to $G[\chi(v)]$, where $v\in V(T)$, implies that $|\chi(v)| < \frac{1}{c(t)}(\omega(G)\cdot \atw(G))^{\frac{1}{\varepsilon(t)}}$ for some positive constants $c(t)$ and $\varepsilon(t)$. Therefore, defining $c_3$ to be the smallest positive integer that satisfies 
    $$
    (\omega(G) \log\, |V(G)|)^{c_3} \quad 
    \geq \quad 
    \frac{1}{c(t)}(\omega(G)\cdot (\omega(G) \log\, |V(G)|)^{c_2})^{\frac{1}{\varepsilon(t)}},
    $$ 
    suffices to prove the implication.\\[-8pt]
    
    {\em (iii) $\implies$ (i)} \quad Let $t$ be the smallest integer such that $t > (2 \log(2t))^{c_3}$. Consider the complete bipartite graph $K_{t,t}$ on $2t$ vertices. Since any tree decomposition $(T,\chi)$ of $K_{t,t}$ has a node $t\in V(T)$ such that $\chi(t)$ contains the closed neighborhood of some vertex, we have $\tw(G)\geq t$. Furthermore, $\omega(K_{t,t}) = 2$. Therefore, by the definition of $\mathcal{C}$ and the choice of $t$, it follows that $\mathcal{C}$ does not contain $K_{t,t}$. 
    
    Let $G \in \mathcal{C}$. Since $\mathcal{C}$ is hereditary, $G$ is  $K_{t,t}$-free (equivalently $\overline{2K_t}$-free). Therefore, by Lemma~\ref{lem:containers1}, $G$ has a $(1,a)$-container family $\mathcal{F}$ of size at most $({|V(G)|+1})^{(2t \log |V(G)| + 3)^4}$ for $a \leq (2t \log |V(G)| + 3)^4$. Choose $f$ to be the smallest positive integer satisfying 
    $$
    f \quad > \quad ({ 42} \cdot (\lceil\log 4|\mathcal{F}|\rceil + 680000 \cdot \lceil \log 2|V(G)| \cdot a^3 \cdot { \log(a\cdot f + 4)}\rceil) \cdot \log |V(G)|)^{c_3}.
    $$
    %
    We apply Theorem~\ref{thm:tw_and_talpha} to $G$, $f$, and $\mathcal{F}$, and observe that, if $\atw(G) > 1020000 \cdot \lceil \log 2|V(G)| \cdot a^3 \cdot { \log(a\cdot f + 4)} \cdot f \rceil$, then there exists an induced subgraph $G' \subseteq G$ with the following properties: $\omega(G')\, \leq\, { 42} \cdot (\lceil\log 4|\mathcal{F}|\rceil + 680000 \cdot \lceil \log 2|V(G)| \cdot a^3 \cdot { \log(a\cdot f + 4)}\rceil)$; and there exists a set $I \subseteq V(G')$ such that every $(I,\frac{1}{2})$-balanced separator $S$ in $G'$ satisfies $|S| \geq \mathrm{cov}_{\mathcal{F}}(S) \geq f$. By Proposition~\ref{prop:tree_width_separator}, we conclude that $\tw(G') \geq f$. Also, since $|V(G')|$ is at least $f \geq 3$, and since $\mathcal{C}$ is hereditary we have $G'\in\mathcal{C}$. This leads to a contradiction due to our choice of $f$, as $G'$ now satisfies $\tw(G') \leq (\omega(G')\log\, |V(G')|)^{c_3} < f$. Thus we conclude that $\atw(G) \leq 1020000 \cdot \lceil \log 2|V(G)| \cdot a^3 \cdot { \log(a\cdot f + 4)} \cdot f \rceil$.

    Therefore, for every $n\in \mathbb{N}$, if we let $a_n := (2t \log n + 3)^4$ and let $f_n$ denote the smallest positive integer such that,
    $$ 
    f_n \quad
    > \quad (({ 42} \cdot 
    (\lceil a_n \cdot{\log (n+1)}+ 2\rceil + 680000 \cdot \lceil \log 2n \cdot a_n^3 \cdot { \log(a_n \cdot f_n + 4)}\rceil)) \cdot
    \log n)^{c_3},
    $$
    then, defining $c_1$ to be the smallest positive integer that satisfies 
    $$
    \log^{c_1} n \quad 
    \geq \quad 1020000 \cdot \lceil \log 2n \cdot a_n^3 \cdot { \log(a_n \cdot f_n + 4)} \cdot f_n \rceil,
    $$ 
    for every positive integer greater than 2, suffices to prove the implication and consequently the theorem.
\end{proof}

\section{Conclusion}\label{sec:conclusion}
    We have shown a ``poly-logarithmic'' variant of the recently disproved Dallard-Milani\v{c}-{\v{S}}torgel Conjecture 
(Conjecture~\ref{Milanic}). Our main result is that for every hereditary graph class ${\cal C}$, every graph in ${\cal C}$ has poly-logarithmic tree-independence number if and only if every graph in ${\cal C}$ has treewidth upper bounded by a polynomial in $\log n$ and the size of its maximum clique. 
It remains to characterize the classes ${\cal C}$ such that every graph in ${\cal C}$ has poly-logarithmic tree-independence number. The following conjecture, if true, would be sufficient to characterize all such classes that are closed under {\em induced minors} (that is, closed under vertex deletion and edge contraction). 
\begin{conjecture}
For every positive integer $t$ there exists an integer $c$ such that every graph $G$ on at least $3$ vertices either contains $K_{t,t}$ or $\boxplus_t$ as an induced minor, or satisfies $\atw(G) \leq (\log n)^c$.
\end{conjecture}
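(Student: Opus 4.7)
The starting point is Theorem~\ref{thm:main}: to bound $\atw(G)$ by a poly-logarithmic function in $n$, it suffices to bound $\tw(G)$ by a polynomial in $\omega(G)\log n$. Let $\mathcal{C}_t$ be the class of graphs that contain neither $K_{t,t}$ nor $\boxplus_t$ as an induced minor; this class is closed under induced minors, and in particular under induced subgraphs, so Theorem~\ref{thm:main} is applicable. The plan is therefore to reduce the conjecture to the following statement: for every $t$ there exists $c = c(t)$ such that every $G \in \mathcal{C}_t$ on at least $3$ vertices satisfies $\tw(G) \leq (\omega(G) \log n)^c$. Equivalently, by contrapositive, any $K_{t,t}$-induced-minor-free graph with sufficiently large treewidth (relative to $\omega(G)\log n$) must contain $\boxplus_t$ as an induced minor.

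The proposed route is to mimic the architecture of the proof of Theorem~\ref{thm:main}. Since $G$ is $K_{t,t}$-induced-minor-free it is in particular $K_{t,t}$-free as an induced subgraph, so Proposition~\ref{prop:erdos_hajnal} applies and Lemma~\ref{lem:containers1} yields a $(1,a)$-container family $\mathcal{F}$ with quasi-polynomial size and $a$ poly-logarithmic in $n$. Assuming $\tw(G)$ is very large compared to $\omega(G)\log n$, Lemma~\ref{lem:tree_alpha_separator} produces an independent set $I$ admitting no small balanced independence separator; combining this with Theorem~\ref{thm:balanced_separator_dual_rounding} yields an induced subgraph $H$ in which $I$ still has no small balanced $\mathcal{F}$-cover separator while every induced subgraph of bounded independence is itself small. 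Inside such an $H$ there is necessarily a rich collection of mutually ``non-collapsing'' induced paths between many pairs of vertices of $I$, and the heart of the argument is to iteratively prune, reroute, and group these paths into a $t \times t$ system of ``rows'' and ``columns'' that realises $\boxplus_t$ as an induced minor, using $K_{t,t}$-induced-minor-freeness as the structural tool that prevents uncontrolled cross-adjacencies between disjoint path bundles.

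The main obstacle is precisely this last step, an induced-minor analogue of the Grid Minor Theorem, made quantitative and coupled to $K_{t,t}$-exclusion. For classical minors, large treewidth automatically produces a large grid minor, but no such clean statement is known for induced minors, and excluding $\boxplus_t$ alone is very far from bounding $\tw$, so the $K_{t,t}$ assumption must be used in an essential way. The quantitative target is demanding: the final dependence on $n$ must be only polynomial in $\log n$, which rules out straightforward Ramsey-based extraction strategies whose bounds blow up faster. A plausible direction is to develop a new extremal theorem of the form ``any sufficiently large, sufficiently tangled family of induced paths between a small set of anchors in a $K_{t,t}$-induced-minor-free graph contains a $\boxplus_t$-shaped subfamily'', with polylogarithmic loss; proving such a statement, or finding a substitute that still plugs into the Theorem~\ref{thm:main} framework, appears to be the main new ingredient required.
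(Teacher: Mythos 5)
This statement is posed in the paper as an open conjecture; the paper contains no proof of it, so there is nothing to compare your argument against except the conjecture's own difficulty. Your reduction step is sound: the class $\mathcal{C}_t$ of graphs excluding both $K_{t,t}$ and $\boxplus_t$ as induced minors is hereditary, so by the implication \emph{(iii)} $\rightarrow$ \emph{(i)} of Theorem~\ref{thm:main} it suffices to show that every $G \in \mathcal{C}_t$ satisfies $\tw(G) \leq (\omega(G)\log n)^{c}$ for some $c = c(t)$. This is almost certainly the intended route, and it is a genuine (if modest) observation that the conjecture is equivalent to a $(\tw,\omega)$-bounding statement rather than a tree-independence statement.

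However, the proposal does not prove the conjecture, and you acknowledge as much: the entire mathematical content lies in the step you defer, namely that a $K_{t,t}$-induced-minor-free graph whose treewidth greatly exceeds $(\omega(G)\log n)^{c}$ must contain $\boxplus_t$ as an induced minor. The paragraph describing how to ``iteratively prune, reroute, and group'' the paths produced by Theorem~\ref{thm:balanced_separator_dual_rounding} into a $t \times t$ grid system is a wish, not an argument: no mechanism is given for controlling adjacencies \emph{within} a bundle of paths (needed so that contracting a bundle is a legitimate induced-minor operation), nor for why $K_{t,t}$-exclusion tames the cross-adjacencies between bundles, nor for why the resulting branch sets can be made connected and pairwise non-adjacent except along grid edges. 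An induced-minor analogue of the Grid Minor Theorem with poly-logarithmic loss is not known, and the machinery of this paper (containers, LP duality, path sampling) produces induced subgraphs with small clique number and large treewidth, which is strictly weaker than producing an induced grid minor. So the gap is not a technical detail but the whole heart of the conjecture.
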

Here, for every positive integer $t$ the graph $\boxplus_t$ is the $t \times t$ grid, defined as the graph with vertex set $[t] \times [t]$ where every pair $(i, j)$, $(i', j')$ of vertices are adjacent if and only if $|i-i'| + |j-j'| = 1$.

On the way to showing our main results we defined the notion of independence-containers, and showed that every hereditary graph class excluding the graph $\overline{kK_k}$ for some positive integer $k$ admits container families of quasi-polynomial size that cover all vertex sets with poly-logarithmic independence number. We leave behind the following open problem.

\begin{problem}
    For every pair of integers $k$ and $b$, does there exist an integer $a$ such that every graph $G$ which excludes $\overline{kK_k}$ as an induced subgraph has a $(b, a)$-container family ${\cal F}$ of size at most $n^{a}$?
\end{problem}

\section{Acknowledgement}\label{sec:acknowledgement}
    We would like to thank Julien Codsi and Yori Zwols for pointing out some errors in Section~\ref{sec:ab_seps} of our initial manuscript. We would also like to express our sincere gratitude to the anonymous reviewer whose constructive feedback helped improve this manuscript.

\bibliographystyle{plain}
\bibliography{ref_journal}

\end{document}